\theoremstyle{plain}
\newtheorem{theorem}{Теорема}
\newtheorem{lemma}{Лемма}
\theoremstyle{definition}
\def\No{No.}
\def\iu{\mathrm{i}}
\def\tht{\theta}
\def\Om{\Omega}
\def\om{\omega}
\def\e{\varepsilon}
\def\g{\gamma}
\def\l{\lambda}
\def\p{\partial}
\def\D{\Delta}
\def\a{\alpha}
\def\b{\beta}
\def\d{\delta}
\def\L{\Lambda}
\def\z{\zeta}
\def\iu{\mathrm{i}}
\def\vp{\varphi}
\def\vw{\varpi}
\def\vr{\varrho}
\def\vk{\varkappa}
\def\Ups{\Upsilon}
\def\Ho{\mathring{W}_2}
\def\cA{\mathcal{A}}
\def\cP{\mathcal{P}}
\def\rf{\mathrm{f}}
\def\rL{T}
\def\bw{\mathbf{w}}
\def\cL{\mathcal{L}}
\def\cB{\mathcal{B}}
\DeclareMathOperator{\RE}{Re}
\DeclareMathOperator{\IM}{Im}
\DeclareMathOperator{\dist}{dist}
\DeclareMathOperator{\dvr}{div}
\DeclareMathOperator{\supp}{supp}
\def\bn{\mathbf{n}}
\begin{document}

\title{Равномерная сходимость для задач с перфорацией вдоль заданного многообразия и третьим нелинейным краевым условием \\ на границах полостей}

\author{Д. И. Борисов$^{1,2,3}$, А. И. Мухаметрахимова$^{4}$}

\maketitle

\begin{quote}

{\small 1) Институт математики с ВЦ УФИЦ РАН, 
450008, Уфа, ул.
Чернышевского, 112,
Россия}

{\small 2) Башкирский государственный университет,
450076, Уфа, ул. Заки Валиди, 32,
Россия}

{\small 3) Университет Градца Кралове,
500 03, Градец Кралове,
ул. Рокитанскего, 62,
Чехия}

{\small 4) Башкирский государственный
педагогический университет им. М. Акмуллы, 
450000, Уфа,
ул. Октябрьской революции, 3а,
Россия}

{\small Emails: borisovdi@yandex.ru, albina8558@yandex.ru}
\end{quote}






\begin{abstract}
В работе рассматривается краевая задача для эллиптического уравнения второго порядка с переменными коэффициентами в многомерной области, перфорированной малыми полостями, часто расположенными вдоль заданного многообразия. Предполагается, что размеры всех полости одного порядка малости, а их форма и распределение вдоль многообразия произвольные. На границах полостей ставится третье нелинейное граничное условие. Доказана сходимость решения возмущённой задачи к решению усреднённой в нормах $L_2$ и $W_2^1$  равномерно по $L_2$-норме правой части уравнения и получены оценки скорости сходимости.

\medskip
%
%
\end{abstract}


\date{}

\section{Введение}\label{s1}

Краевые задачи в областях, перфорированных вдоль заданного многообразия, изучались во многих работах, см., например, статьи \cite{1}, \cite{2}, \cite{3}, \cite{4}, \cite{5}, \cite{6}, \cite{7}, \cite{8}, \cite{9}, \cite{10}, \cite{35},  монографии \cite{ShBook}, \cite{MarKhr}, а также списки литературы в цитированных работах. Перфорация в них описывалась малыми полостями, расположенными вдоль заданного многообразия или границы области. В задачах выделялись два малых параметра -- размеры полостей и расстояние между ними. Целью являлось изучение поведения рассматриваемых задач при уменьшении малых параметров. Основные полученные результаты  -- доказательство сходимости решений рассматриваемых задач в нормах пространств $L_2$ или $W_2^1$ к решениям некоторых усреднённых задач. При этом последние задачи отличались от исходных тем, что в них уже отсутствует перфорация, а вместо нее возникает усреднённое краевое условие на многообразии или границе области, вдоль которого располагались полости.

Упомянутые выше классические результаты о сходимости решений означают сильную или слабую резольвентную сходимость. В последние 15 лет в теории усреднения развивается новое направление исследований: появились работы, в которых для задач с быстро осциллирующими коэффициентами доказывается более сильный тип сходимости -- равномерная резольвентная сходимость, см. \cite{11}, \cite{12}, \cite{13}, \cite{14}, \cite{Sen1}, \cite{Sen2}, \cite{Pas1}, \cite{Pas2}, другие работы цитированных авторов и списки литературы в этих работах.
Для задач теории граничного усреднения вопросы равномерной резольвентной сходимости изучались в работах \cite{15}, \cite{16}, \cite{17}, \cite{18}, \cite{19}, \cite{20}, \cite{21}, \cite{22}, \cite{23}, \cite{24}, \cite{25}. В \cite{15}, \cite{16}, \cite{17}, \cite{18}, \cite{19} исследованы эллиптические операторы в плоской бесконечной полосе с частой периодической и непериодической сменой граничных условий. В \cite{21}, \cite{22}, \cite{23} рассмотрен эллиптический оператор в произвольной многомерной области с частым непериодическим чередованием граничных условий.  В \cite{20} изучен общий эллиптический самосопряженный оператор в полосе с быстро осциллирующей границей. Результаты работ \cite{15}, \cite{16}, \cite{17}, \cite{18},  \cite{19}, \cite{20}, \cite{21}, \cite{22}, \cite{23} утверждают наличие равномерной резольвентной сходимости возмущённых оператор к некоторым усреднённым и дают оценки скорости сходимости.

В \cite{24} исследован эллиптический оператор второго порядка с переменными коэффициентами в плоской полосе, перфорированной вдоль заданной кривой. На границах полостей выставлялось одно из классических краевых условий. Изучены различные возможные усредненные операторы, вид которых зависел от распределения полостей и соотношения между размерами полостей и расстояний между ними. Во всех случаях была доказана равномерная резольвентная сходимость возмущённого оператора к усреднённому и получены оценки скорости сходимости.

В \cite{25} рассматривалась краевая задача для эллиптического уравнения второго порядка с переменными коэффициентами в многомерной области, перфорированной малыми полостями вдоль заданного многообразия. Отверстия были поделены на два множества. На границах полостей первого множества ставилось условие Дирихле, на границах полостей второго множества -- третье нелинейное граничное условие. Изучался случай, когда при усреднении на многообразии возникает условие Дирихле. Была доказана сходимость решения возмущённой задачи к решению усреднённой в норме $W_2^1$ равномерно по правой части уравнения и получена неулучшаемая по порядку оценка скорости сходимости. Также было построено полное асимптотическое разложение решения возмущённой задачи в случае, когда полости образуют периодическое множество, расположенное вдоль заданной гиперплоскости.

В настоящей работе мы продолжаем исследование краевых задач в областях с непериодической перфорацией вдоль заданного многообразия, начатое в   \cite{25}. Рассматривается краевая задача для эллиптического уравнения второго порядка с переменными коэффициентами в области, перфорированной вдоль заданного многообразия. Область может быть как ограниченной, так и неограниченной. Предполагается, что все полости имеют размеры одного порядка, а форма полостей и их распределение вдоль многообразия могут быть произвольными. На границах полостей ставится третье нелинейное граничное условие. В отличии от работы \cite{25}, краевое условие Дирихле на границе полостей не выставляется. В зависимости от соотношения между размерами полостей и расстояний между ними в пределе возникают два основных случая. А именно, в первом случае при усреднении полости пропадают вместе с многообразием, вдоль которого они расположены; во втором случае при усреднении на многообразии возникает граничное условие, которое уместно интерпретировать как нелинейное дельта-взаимодействие. Нашим основным результатом работы является доказательство сходимости решения возмущённой задачи к решению усреднённой в норме пространства $W_2^1$ равномерно по $L_2$-норме правой части уравнения и получение такой же равномерной оценки скорости сходимости. Кроме того,  получены и аналогичные оценки разности решений в $L_2$-норме, причём за счёт перехода к более слабой норме удаётся улучшить оценку скорости сходимости.

Отметим, что нам известна лишь одна работа, где были установлены равномерные оценки, аналогичные нашим \cite{33}. В этой работе рассматривалась ограниченная трёхмерная область, строго периодически перфорированная вдоль плоскости. На границе областей задавалось классическое линейное третье краевое условие. Для различных случаев соотношений между размерами полостей, расстояний между ними и коэффициента в третьем краевом условии были получены равномерные по правой части оценки разности решений возмущённой и усреднённой задач. Подчеркнём, что рассматриваемый нами случай существенно более сложный ввиду произвольной непериодической структуры чередования и также произвольной размерности. При этом следует отметить, что размерность пространства является важным фактором, так как в размерности два и три
имеются теоремы о вложении пространства $W_2^2$ в пространство непрерывных функций и это облегчает  доказательство оценок в непериодическом случае, см. \cite{24}. В случае же произвольной размерности приходится проводить более тонкий анализ, см. определение нормы $\|\,\cdot\,\|_S$ в следующем параграфе и лемму~\ref{lm3.0} из третьего параграфа. Для строго периодического чередования техника доказательства оценок существенно упрощается и в произвольной размерности.

Опишем структуру статьи. В следующем параграфе описывается постановка задачи и формулируются основные результаты. В третьем параграфе мы обсуждаем различные случаи структуры перфораций, для которых справедливы наши основные результаты. В четвёртом параграфе мы приводим серию  вспомогательных лемм, которые далее используются в трёх последующих параграфах для доказательства основных результатов.

\section{Постановка задачи и формулировка результатов}
\label{s2}

Пусть $x=(x',x_n)$ и $x'=(x_1,\ldots,x_{n-1})$ -- декартовы координаты в $\mathds{R}^n$ и $\mathds{R}^{n-1}$ соответственно, $n\geqslant3$. Через $\Om$ обозначим произвольную область в $\mathds{R}^n$ с границей класса $C^2$. Пусть $S\subset\Om$ -- многообразие без края класса $C^3$ коразмерности $1$, которое либо замкнуто, либо бесконечно.

Обозначим через $\e$ малый положительный параметр, а через $\eta=\eta(\e)$ функцию, удовлетворяющую неравенству: $0<\eta(\e)\leqslant1$. Пусть $\mathbb{M}^\e\subseteq\mathds{N}$ -- произвольное множество. Выберем в окрестности многообразия $S$ точки $M_k^\e$, $k\in\mathbb{M}^\e$ так, чтобы выполнялось условие
$\dist(M_k^\e,S)\leqslant R_0\e$, где $R_0>0$ -- некоторая константа, не зависящая от $k$ и $\e$. Через $\omega_{k,\e}$, $k\in\mathbb{M}^\e$, обозначим ограниченные области в $\mathds{R}^n$ c границами класса $C^2$; допускается зависимость областей от $\e$. Положим:
\begin{equation*}
\om_k^\e:=\big\{x:\, (x-M_k^\e)\e^{-1}\eta^{-1}(\e)\in \om_{k,\e}\big\},\qquad \tht^\e:=\bigcup\limits_{k\in\mathbb{M}^\e}\om_k^\e.
\end{equation*}
Из области $\Om$ вырежем полости $\om_k^\e$, $k\in\mathbb{M}^\e$ и обозначим полученную область через $\Om^\e$, т.е., $\Om^\e:=\Om\setminus\tht^\e$, см. рис. 1.

\begin{figure}[t]
\begin{center}
\includegraphics[scale=0.45]{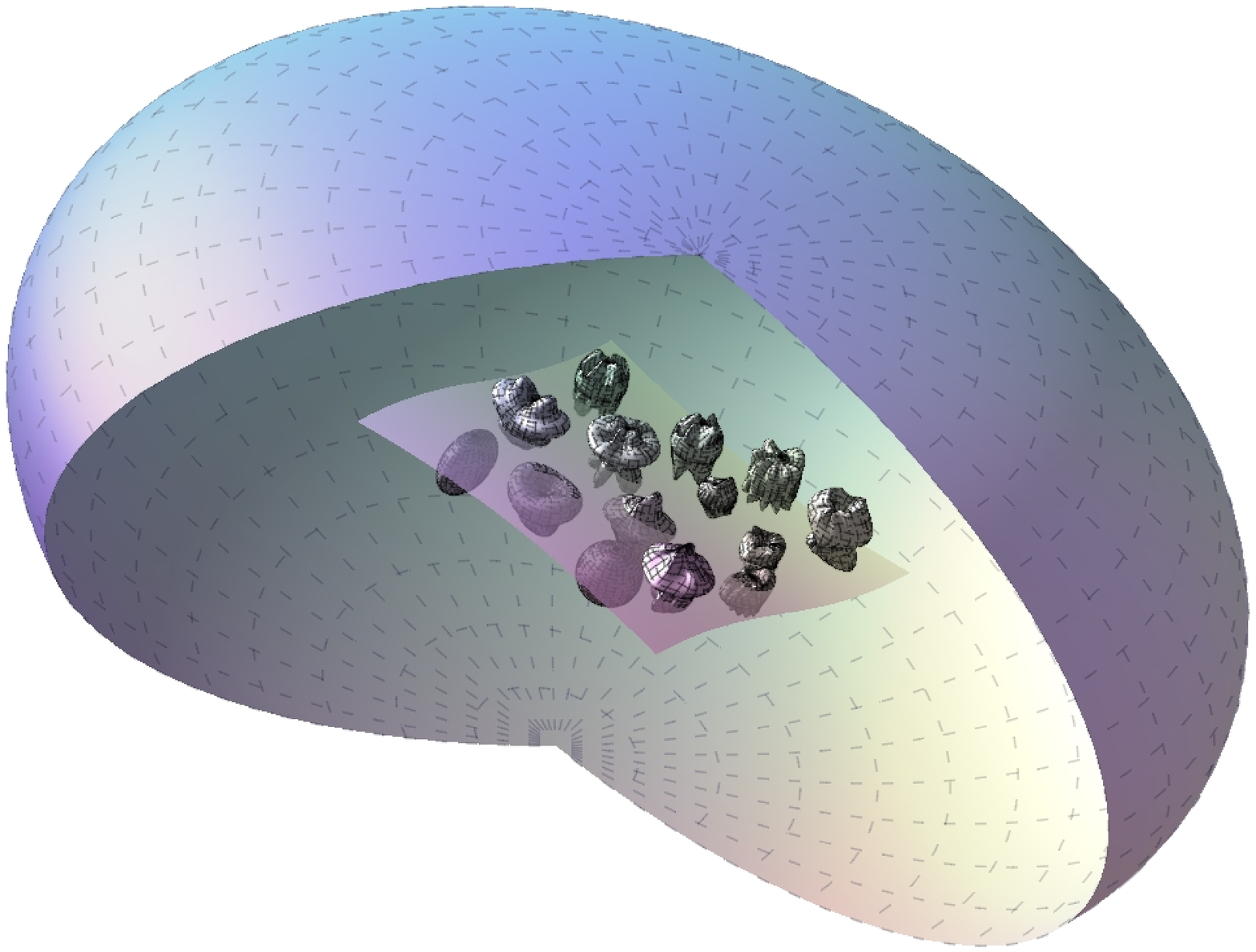}
\caption{Пример области, перфорированной вдоль многообразия}
\end{center}
\end{figure}

В области $\Om$ зададим функции $A_{ij}=A_{ij}(x)$, $A_j=A_j(x)$, $A_0=A_0(x)$, удовлетворяющие  условиям:
\begin{equation}\label{2.1}
\begin{aligned}
&A_{ij}\in W_\infty^1(\Om),\qquad  A_j, A_0\in L_\infty(\Om),\qquad A_{ij}=A_{ji},\qquad i,j=1,\ldots,n,
\\
&\sum\limits_{i,j=1}^n A_{ij}(x)z_i \overline{z_j}\geqslant c_0|z|^2,\qquad x\in\Om,\qquad z=(z_1\ldots,z_n)\in\mathds{C}^n,
\end{aligned}
\end{equation}
где $c_0>0$ -- некоторая константа, не зависящая от $x$ и $z$. Функции $A_{ij}$
являются вещественнозначными, а функции $A_j$, $A_0$ -- комплекснозначными. Через $a=a(x,u)$ обозначим некоторую комплекснозначную функцию, заданную для $u\in\mathds{C}$ и $x\in  \{x:\, \dist(x,S)\leqslant\tau_0\}$, где $\tau_0>0$ -- некоторое фиксированное число. Будем считать, что функция $a$ удовлетворяет следующим условиям:
\begin{equation}\label{2.2}
\begin{aligned}
&\left|\frac{\p a}{\p \RE u}(x,u)\right|+\left|\frac{\p a}{\p \IM u}(x,u)\right|\leqslant a_0,
\\
&\hphantom{\bigg|}a(u,0)=0, \qquad |\nabla_x a(x,u)|\leqslant a_1 |u|,
\end{aligned}
\end{equation}
где $a_0$ и $a_1$ -- некоторые константы, не зависящие от $x$ и $u$.
Пусть $f\in L_2(\Om)$ -- некоторая функция, $\lambda$ -- вещественное число.

В работе рассматривается следующая краевая задача:
\begin{equation}\label{2.5}
\begin{gathered}
\bigg(-\sum\limits_{i,j=1}^n \frac{\p}{\p x_i}A_{ij}\frac{\p}{\p x_j}+\sum\limits_{j=1}^n
A_j\frac{\p}{\p x_j}
+A_0-\lambda\bigg) u_\e=f\quad\text{в}\quad\Om^\e,
\\
 u_\e=0\quad\text{на}\quad\p\Om,\qquad\frac{\p u_\e}{\p\mathrm{n}}+a(\,\cdot\,,u_\e)=0\quad\text{на}\quad\p\tht^\e,
\end{gathered}
\end{equation}
где производная по конормали задана соотношением:
\begin{equation*}
\frac{\p}{\p\mathrm{n}}=\sum\limits_{i,j=1}^n A_{ij}\nu_i\frac{\p}{\p x_j},
\end{equation*}
$\nu_i$ -- $i$-ая компонента единичной нормали $\nu$ к $\p\tht^\e$, направленная внутрь множества $\tht^\e$. Основной целью данной работы является исследование асимптотического поведения решения краевой задачи (\ref{2.5}) при $\e\to0$.

Основные результаты работы получены при выполнении некоторых условий на геометрию перфорации. Сформулируем эти условия. Через $\tau$ обозначим расстояние от точки до $S$, измеренное вдоль нормали, а через $s$ -- какие-нибудь локальные переменные на поверхности $S$. Наше первое условие означает определённую регулярность поверхности $S$.

\begin{enumerate}
\def\theenumi{{A\arabic{enumi}}}
\item\label{A1}
Существует
фиксированная константа  $c_1>0$, такая что переменные $(\tau,s)$ корректно определены по крайней мере в области $\{x:\,  \dist(x,S)\leqslant\tau_0\}$ и верны равномерные оценки:
\begin{equation*}
\left|\nabla_{(\tau,s)} x_i \right|\leqslant c_1,\qquad i=1,\ldots,n.
\end{equation*}
\end{enumerate}

Пусть $B_r(M)$ -- шар в $\mathds{R}^n$ с центром в точке $M$ радиуса $r$. На размеры и взаимное расположение полостей $\om_k^\e$ наложим следующее условие.
\begin{enumerate}
\def\theenumi{{A\arabic{enumi}}}
\setcounter{enumi}{1}
\item\label{A2}
Существуют точки $M_{k,\e}\in\om_k^\e$, $k\in\mathbb{M}^\e$ и числа $0<R_1<R_2$, $b>1$, не зависящие от $\e$, такие  что для достаточно малых $\e$ выполнено:
\begin{align*}
&B_{R_1}(M_{k,\e})\subset\om_{k,\e}\subset B_{R_2}(0),\qquad\hphantom{..} k\in \mathbb{M}^\e,
\\
&B_{bR_2\e}(M_k^\e)\cap B_{bR_2\e}(M_i^\e)=\emptyset, \qquad i,k\in\mathbb{M}^\e,\quad i\neq k.
\end{align*}
Для всех $k$ и $\e$ множества $B_{R_2}(0)\setminus\om_{k,\e}$ связны.
\end{enumerate}

В окрестности границ областей $\om_{k,\e}$ введём локальную переменную $\rho$ -- расстояние от точки до границы $\p\om_{k,\e}$, измеренное в направлении внешней нормали. Следующие два условия касаются форм областей $\p\om_{k,\e}$.
 \begin{enumerate}
 \def\theenumi{{A\arabic{enumi}}}
\setcounter{enumi}{2}
\item\label{A3}
Существуют фиксированные константы $\rho_0>0$, $c_2>0$ такие, что переменная
$\rho$ корректно определена по крайней мере на множествах $\{x:\, \dist(x,\p\om_{k,\e})\leqslant\rho_0\}\setminus\om_{k,\e}\subseteq B_{b_* R_2}(0)$, $b_*:=(b+1)/2$, одновременно для всех $k\in\mathbb{M}^\e$ и на данных множествах верны равномерные по $\e$, $\e$, $x$ и $k\in\mathbb{M}^\e$ оценки:
\begin{equation*}
\left|\frac{\p x_i}{\p\rho}\right|\leqslant c_2,\qquad i=1,\ldots,n.
\end{equation*}
 \end{enumerate}

 \begin{enumerate}
 \def\theenumi{{A\arabic{enumi}}}
\setcounter{enumi}{3}
\item\label{A4}
Существует обобщенное решение $X_k\in L_\infty(B_{b_*R_2}(0)\setminus\om_{k,\e})$, $k\in\mathbb{M}^\e$, краевой задачи:
\begin{equation}\label{2.6} 
\begin{gathered}
\dvr X_k=f_k\quad\text{в}\quad B_{b_* R_2}(0)\setminus\om_{k,\e},
\\
X_k\cdot\vartheta=-1\quad\text{на}\quad\p\om_{k,\e}, \qquad
X_k\cdot\vartheta=\phi_k\quad\text{на}\quad\p\om_{k,\e},
\end{gathered}
\end{equation}
где $\vartheta$ -- внешняя нормаль к $\p B_{b_*R_2}(0)$ и $\p\om_{k,\e}$,  $f_k\in L_\infty(B_{b_*R_2}(0)\setminus\om_{k,\e})$, $\phi_k\in L_2(\p B_{b_* R_2}(0))$, $k\in\mathbb{M}^\e$ -- некоторые функции, причём выполнено условие:
\begin{align}
&\int\limits_{B_{b_* R_2}(0)\setminus\om_{k,\e}} f_k \,dx=0. \label{2.7}
\end{align}

Функции $X_k$ и $f_k$ ограничены  в норме $L_\infty(B_{b_*R_2}(0)\setminus\om_{k,\e})$, равномерно по всем $k\in\mathbb{M}^\e$ и $\e$, а функции $\phi_k$ аналогично равномерно ограничены в норме $L_\infty\big(\p B_{b_* R_2}(0)\big)$.
\end{enumerate}

Отметим, что под обобщённым решением задачи (\ref{2.6}) для некоторой функции $f_k\in L_2(B_{b_*R_2(0)\setminus\om_{k,\e}})$ мы понимаем функцию $X\in L_2(B_{b_* R_2}(0)\setminus\om_{k,\e})$ такую, что
\begin{equation}\label{12.85}
\int\limits_{B_{b_*R_2(0)\setminus\om_{k,\e}}} f_k\overline{\psi} \,dx= \int\limits_{\p B_{b_*R_2}(0)}\phi_k  \overline{\psi}\,ds - \int\limits_{\p \om_{k,\e}} \overline{\psi}   \,ds
\end{equation}
для произвольной пробной функции $\psi\in C^1(\overline{B_{b_*R_2(0)}\setminus\om_{k,\e}})$, где функция $\phi_k$ -- из $L_2(\p B_{b_* R_2}(0))$. Условие~\ref{A4} дополнительно требует попадания функций $f_k$, $X_k$, $\phi_k$ в соответствующие $L_\infty$-пространства и равномерную ограниченность в нормах этих пространств.  

Пусть $\z=\z(t)$, $t\in[0,1]$ -- бесконечно дифференцируемая срезающая функция, принимающая значения из отрезка $[0,1]$, равная нулю при $|t|>1$ и удовлетворяющая условию
\begin{equation}\label{2.8b}
 \int\limits_{\mathbb{S}^{n-2}} \z(|t|)\,dt=1,
\end{equation}
где $\mathbb{S}^{n-2}$ --  единичная сфера в пространстве $\mathds{R}^{n-1}$.
Через $M_{k,\bot}^\e$ обозначим проекции точек $M_k^\e$ на поверхность $S$. На поверхности $S$ определим  функцию:
\begin{equation}\label{2.8}
\a^\e(x)=\left\{
\begin{aligned}
\frac{\eta^{n-1}|\p\om_{k,\e}|}{R_2^{n-1}}
\z&\left(\frac{|
x-M_{k,\bot}^\e|}{\e  R_2}\right)
&&\text{при}\quad |
x-M_{k,\bot}^\e|<\e R_2,\quad k\in \mathbb{M}^\e,\\
& 0 \quad &&\text{в остальных точках} \  S.
\end{aligned}
\right.
\end{equation}

Обозначим: $\vw:=\big\{x\in\mathds{R}^n:\, 0<\tau<\frac{\tau_0}{2}\big\}$.
Пусть
$\Phi$ -- произвольная функция, заданная на $S$ и являющаяся следом некоторой функции из $W_2^1(\vw)$, то есть, $\Phi\in W_2^{\frac{1}{2}}(S)$. Ясно, что следующие две задачи однозначно разрешимы в $W_2^1(\vw)$:
\begin{gather}\label{2.23}
\begin{gathered}
-\D U_\Phi^N + U_\Phi^N=0\quad\text{в}\quad \vw,
\\
\frac{\p U_\Phi^N}{\p\tau}=-\Phi\quad\text{на}\quad S,
\qquad\frac{\p U_\Phi^N}{\p\nu}=0\quad\text{на}\quad \p\vw\setminus S,
\end{gathered}
\\
\label{2.21}
\begin{gathered}
-\D U_\Phi^D + U_\Phi^D=0\quad\text{в}\quad \vw,
\\
 U_\Phi^D=\Phi\quad\text{на}\quad S,  \hphantom{\p-,}
\qquad\frac{\p U_\Phi^D}{\p\nu}=0\quad\text{на}\quad \p\vw\setminus S,
\end{gathered}
\end{gather}
где $\nu$ -- единичная нормаль к  поверхности $\p\vw\setminus S$, внешняя к области $\vw$. Далее в третьем параграфе будет показано (см. лемму~\ref{lm12.7}), что  следующая норма определена корректно по крайней мере на пространстве $L_\infty(S)$:
\begin{equation}\label{2.22}
\|\a\|_S^2:= \sup\limits_{ \substack{\Phi\in W_2^{\frac{1}{2}}(S)
\\ \Phi\ne 0}}\frac{\|U_{\a\Phi}^N\|_{W_2^1(\vw)}^2}{\|U_\Phi^D\|_{W_2^1(\vw)}^2},
\end{equation}
где $\a$ -- произвольная функция из $L_\infty(S)$.

На функцию $\a^\e$ наложим следующее условие.
\begin{enumerate}
\def\theenumi{{A\arabic{enumi}}}
\setcounter{enumi}{4}
\item\label{A5}
Существуют ограниченная измеримая функция $\a^0$, заданная на $S$ и принадлежащая $W_\infty^1(S)$,    и функция $\kappa=\kappa(\e)\to+0$ при $\e\to+0$ такие, что для всех достаточно малых $\e$ верны оценки:
\begin{equation*}
\|\a^\e-\a^0\|_{S}\leqslant\kappa(\e).
\end{equation*}
\end{enumerate}

Обозначим через $\Ho^1(\Om^\e,\p\Om)$ подпространство функций из $W_2^1(\Om^\e)$, обращающихся в нуль на $\p\Om$. Решение краевой задачи (\ref{2.5}) будем понимать в обобщенном смысле. Обобщенным решением задачи (\ref{2.5}) называется функция $u_\e$, принадлежащая пространству $W_2^1(\Om^\e)$ и удовлетворяющая интегральному тождеству:
\begin{equation*}
\mathfrak{h}_a(u_\e,v)-\l(u,v)_{L_2(\Om^\e)}=(f,v)_{L_2(\Om^\e)}
\end{equation*}
для любых $v_\e\in\Ho^1(\Om^\e,\p\Om)$, где
\begin{align}
&
\mathfrak{h}_a(u,v):=\mathfrak{h}_0(u,v)+(a(\,\cdot\,,u),v)_{L_2(\p\tht^\e)},
\nonumber
\\
&
\begin{aligned}
\mathfrak{h}_0(u,v):=&\sum\limits_{i,j=1}^n\left(A_{ij}\frac{\p u}{\p x_j},\frac{\p v}{\p x_i}\right)_{L_2(\Om^\e)}+\sum\limits_{j=1}^n\left( A_j\frac{\p
u}{\p x_j},v\right)_{L_2(\Om^\e)}
\\
&
+(A_0 u,v)_{L_2(\Om^\e)}.
\end{aligned}\label{2.11}
\end{align}
Здесь интеграл по границе полостей $\p\tht^\e$ понимается в смысле следов. Далее мы докажем, что
условия \ref{A1}, \ref{A2}, \ref{A3} обеспечивают существование такого следа в пространстве $L_2(\p\tht^\e)$ (лемма \ref{lm3.1}). Также докажем, что при подходящем выборе параметра $\l$ задача (\ref{2.5}) имеет единственное решение (лемма \ref{lm3.2}).

Если выполнены условия ~\ref{A1},~\ref{A2},~\ref{A3},~\ref{A4} и одно из следующих условий: $a\equiv0$ или $\eta(\e)\to 0$, $\e\to0$, то при усреднении полости пропадают вместе с многообразием $S$ и усреднённая задача для (\ref{2.5}) имеет вид:
\begin{equation}\label{2.12}
\begin{aligned}
\bigg(-\sum\limits_{i,j=1}^n\frac{\p}{\p x_i} A_{ij}\frac{\p }{\p x_j}+\sum\limits_{j=1}^n
&A_j\frac{\p}{\p x_j}
+A_0-\lambda \bigg) u_0=f\quad\text{в}\quad
\Om,
\\
&u_0=0\quad\text{на}\quad\p\Om.
\end{aligned}
\end{equation}

Если же $\eta$ не стремится к нулю, а функция $a$ произвольна, то при выполнении условий~\ref{A1},~\ref{A2},~\ref{A3},~\ref{A4},~\ref{A5} усреднённая задача для  (\ref{2.5}) имеет вид
\begin{gather}
\bigg(-\sum\limits_{i,j=1}^n\frac{\p}{\p x_i} A_{ij}\frac{\p }{\p x_j}+\sum\limits_{j=1}^n
A_j\frac{\p}{\p x_j}
+A_0-\lambda \bigg)u_0=f\quad\text{в}\quad
\Om,\label{2.13}
\\
u_0=0\quad \text{на}\quad\p\Om,\label{2.15}
\qquad
[u_0]_S=0,\qquad\left[\frac{\p u_0}{\p \mathrm{n}}\right]_S+\a^0 a(\,\cdot\,, u_0)\big|_S=0,
\end{gather}
где  $[u]_S:=u|_{\tau=+0}-u|_{\tau=-0}$ -- скачок функции $u$ на $S$. В этом случае на многообразии $S$ возникает граничное условие из (\ref{2.15}). Отметим, что граничное условие (\ref{2.15}) описывает нелинейное дельта-взаимодействие на поверхности $S$. Решения задач (\ref{2.12}) и (\ref{2.13}),
(\ref{2.15}) также будем понимать в обобщенном смысле.

Наши  основные результаты сформулированы в следующих двух теоремах. Первая из них описывает ситуацию, когда при усреднении возникает задача (\ref{2.12}).

\begin{theorem}\label{th1}
Пусть выполнены предположения~\ref{A1},~\ref{A2},~\ref{A3},~\ref{A4}.
Тогда существует $\l_0$, не зависящее от $\e$, такое что при $\l<\l_0$ задачи (\ref{2.5}) и (\ref{2.12}) однозначно разрешимы для всех $f\in L_2(\Om)$. Если дополнительно выполнено одно из условий
\begin{equation}\label{2.16}
a\equiv0\quad\text{или}\quad\eta(\e)\to 0,\quad\e\to0,
\end{equation}
то справедливы неравенства:
\begin{equation}\label{2.17}
\|u_\e-u_0\|_{W_2^1(\Om^\e)}\leqslant C\big(\e\eta+\e^{\frac{1}{2}}\eta^{\frac{n}{2}}(\e)\big)\|f\|_{L_2(\Om)}, \hphantom{(\e)}
\quad\text{}\quad
\end{equation}
если $a\equiv0$,
и
\begin{equation}\label{2.18}
\|u_\e-u_0\|_{W_2^1(\Om^\e)}\leqslant C\big(\e\eta(\e)
+\eta^{n-1}(\e)\big)\|f\|_{L_2(\Omega)},
\end{equation}
 если $\eta(\e)\to 0$, $\e\to0$,
где константы $C$ не зависят от $\e$ и $f$, но зависят от $\lambda$.
\end{theorem}

Во второй теореме описывается ситуация, когда усреднение приводит к задаче (\ref{2.13}), (\ref{2.15}).

\begin{theorem}\label{th2}
Пусть выполнены предположения~\ref{A1},~\ref{A2},~\ref{A3},~\ref{A4},~\ref{A5}. Тогда существует $\lambda_0$, не зависящее от $\e$, $\eta$ и $f$, такое что при $\lambda<\lambda_0$ задачи (\ref{2.5}) и (\ref{2.13}),
(\ref{2.15}) однозначно разрешимы для всех $f\in L_2(\Om)$ и имеет место неравенство:
\begin{equation}\label{2.19}
\|u_\e-u_0\|_{W_2^1(\Om^\e)}\leqslant C\big(\e^\frac{1}{2}
+\kappa(\e)\big)\|f\|_{L_2(\Om)},
\end{equation}
где константа $C$ не зависит от $\e$ и $f$, но зависит от $\lambda$.
\end{theorem}

В следующих двух теоремах мы показываем, что ослабляя норму для разности решений возмущённой и соответствующей усреднённой задачи, мы добиваемся более высокой скорости сходимости.

\begin{theorem}\label{th3}
Пусть $A_j\in W_\infty^1(\Om)$, выполнены предположения~\ref{A1},~\ref{A2}, \ref{A3},~\ref{A4} и одно из  условий в
(\ref{2.16}). Тогда для решений задач (\ref{2.5}), (\ref{2.12})
верны оценки
\begin{equation}\label{2.20}
\begin{aligned}
\|u_\e-u_0\|_{L_2(\Om^\e)}\leqslant & C\big(\e^2\eta^2(\e)+\e\eta^n(\e)\big)\|f\|_{L_2(\Om^\e)}
\\
&+C
\big(\e\eta(\e)+\e^{\frac{1}{2}}\eta^{\frac{n}{2}}(\e)\big) \|f\|_{L_2(\tht^\e)},
\end{aligned}
\end{equation}
если $a\equiv0$, и
\begin{equation}\label{2.4}
\begin{aligned}
\|u_\e-u_0\|_{L_2(\Om^\e)}\leqslant &  C(\e^2\eta(\e)+\eta^{n-1}(\e)) \|f\|_{L_2(\Omega^\e)}
\\
&+C
\big(\e\eta(\e)+\e^{\frac{1}{2}}\eta^{\frac{n}{2}}(\e)\big) \|f\|_{L_2(\tht^\e)},
\end{aligned}
\end{equation}
если $\eta(\e)\to 0$ при  $\e\to0$. В этих оценках  константы $C$ не зависят от $\e$ и $f$, но зависят от $\lambda$.
\end{theorem}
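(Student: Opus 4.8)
\emph{Plan of proof.} The plan is to run an Aubin--Nitsche duality argument, bootstrapping the $W_2^1$-estimate of Theorem~\ref{th1} into the sharper $L_2$-estimate. Fix $g\in L_2(\Om^\e)$ and let $v_\e\in\Ho^1(\Om^\e,\p\Om)$ be the solution of the adjoint of the \emph{linear} problem obtained from~(\ref{2.5}) by freezing the argument in $a$, i.e. of $\mathfrak{h}_0(\,\cdot\,,v_\e)-\l(\,\cdot\,,v_\e)_{L_2(\Om^\e)}=(\,\cdot\,,g)_{L_2(\Om^\e)}$ on $\Ho^1(\Om^\e,\p\Om)$; its differential expression is $-\sum_{i,j}\frac{\p}{\p x_i}A_{ij}\frac{\p}{\p x_j}-\sum_j\frac{\p}{\p x_j}(\overline{A_j}\,\cdot\,)+\overline{A_0}-\l$ with the natural (Neumann, or Robin with a bounded coefficient) condition on $\p\tht^\e$. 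The extra hypothesis $A_j\in W_\infty^1(\Om)$ is exactly what keeps the zero-order coefficient $\overline{A_0}-\sum_j\frac{\p\overline{A_j}}{\p x_j}$ of this expression in $L_\infty$, so this adjoint family has the structure~(\ref{2.1})--(\ref{2.2}); hence, after shrinking $\l_0$ if needed, Lemma~\ref{lm3.2} gives its uniform solvability and Theorem~\ref{th1} applies to it, producing $\|v_\e-v_0\|_{W_2^1(\Om^\e)}\leq C\mu(\e)\|g\|_{L_2(\Om^\e)}$ and $\|v_0\|_{W_2^2(\Om)}\leq C\|g\|_{L_2(\Om^\e)}$, where $v_0$ solves the adjoint limit problem~(\ref{2.12}) with right-hand side equal to $g$ on $\Om^\e$ and $0$ on $\tht^\e$, and $\mu(\e)$ is the rate of~(\ref{2.17}), resp.~(\ref{2.18}).

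The first step is the error identity. Since $u_0$ solves~(\ref{2.12}) in all of $\Om$, in particular inside each cavity $\om_k^\e$, integrating by parts over $\Om^\e$ and subtracting from the integral identity for $u_\e$ gives, for $w_\e:=u_\e-u_0$ and all $v\in\Ho^1(\Om^\e,\p\Om)$,
\[
\mathfrak{h}_0(w_\e,v)-\l(w_\e,v)_{L_2(\Om^\e)}=\int\limits_{\p\tht^\e}\Big(\frac{\p u_0}{\p\mathrm{n}}-a(\,\cdot\,,u_\e)\Big)\overline{v}\,ds,
\]
the source $f$ dropping out because~(\ref{2.5}) and~(\ref{2.12}) share the same right-hand side on $\Om^\e$. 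Taking $v=v_\e$ and using the adjoint identity for $v_\e$ tested on $w_\e$ yields
\[
(w_\e,g)_{L_2(\Om^\e)}=\int\limits_{\p\tht^\e}\Big(\frac{\p u_0}{\p\mathrm{n}}-a(\,\cdot\,,u_\e)\Big)\overline{v_\e}\,ds.
\]
Now split $v_\e=v_0+(v_\e-v_0)$. For the remainder contribution, the right-hand side of the error identity with $v=v_\e-v_0$ equals $\mathfrak{h}_0(w_\e,v_\e-v_0)-\l(w_\e,v_\e-v_0)_{L_2(\Om^\e)}$, so by boundedness of $\mathfrak{h}_0$ on $W_2^1(\Om^\e)\times W_2^1(\Om^\e)$ and Theorem~\ref{th1} applied to both $w_\e$ and $v_\e-v_0$ it is bounded by $C\mu(\e)^2\|f\|_{L_2(\Om)}\|g\|_{L_2(\Om^\e)}$ --- this is the step that squares the $W_2^1$-rate; splitting $\|f\|_{L_2(\Om)}\leq\|f\|_{L_2(\Om^\e)}+\|f\|_{L_2(\tht^\e)}$ and using $\mu(\e)^2\leq C(\e^2\eta^2+\e\eta^n)$, resp. $C(\e^2\eta+\eta^{n-1})$, together with $\mu(\e)^2\leq\e\eta+\e^{1/2}\eta^{n/2}$, this term already sits inside~(\ref{2.20}), resp.~(\ref{2.4}).

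The main contribution is $\int_{\p\tht^\e}(\frac{\p u_0}{\p\mathrm{n}}-a(\,\cdot\,,u_\e))\overline{v_0}\,ds$ with the now smooth function $v_0$. For $a\equiv0$ I integrate by parts over each $\om_k^\e$ separately, again using that $u_0$ solves~(\ref{2.12}) inside $\om_k^\e$: this turns $\int_{\p\om_k^\e}\frac{\p u_0}{\p\mathrm{n}}\overline{v_0}\,ds$ into $\int_{\om_k^\e}\sum_{ij}A_{ij}\frac{\p u_0}{\p x_j}\overline{\frac{\p v_0}{\p x_i}}\,dx+\int_{\om_k^\e}\big(\sum_j A_j\frac{\p u_0}{\p x_j}+(A_0-\l)u_0-f\big)\overline{v_0}\,dx$, so that after summation over $k$ everything reduces to $L_2(\tht^\e)$-norms of $u_0,\nabla u_0,v_0,\nabla v_0$ and of $f$. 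The key tool is a Poincar\'e-type inequality on the thin sparse set $\tht^\e$ --- a union of balls of radius $\sim\e\eta$, pairwise $\sim\e$-separated by~\ref{A2} --- of the form $\|\phi\|_{L_2(\tht^\e)}\leq C(\e\eta+\e^{1/2}\eta^{n/2})\|\phi\|$ in the appropriate $W_2^1(\vw)$/$W_2^2(\Om)$-norm, obtained by Poincar\'e--Wirtinger on the small balls against the mean over the disjoint $\e$-balls plus a one-dimensional trace estimate across the $C\e$-tube about $S$ (which supplies the $\e^{1/2}$). Combined with $\|u_0\|_{W_2^2(\Om)}\leq C\|f\|_{L_2(\Om)}$ and $\|v_0\|_{W_2^2(\Om)}\leq C\|g\|_{L_2(\Om^\e)}$ from interior elliptic regularity, this gives a $C\mu(\e)^2\|f\|_{L_2(\Om)}\|g\|_{L_2(\Om^\e)}$ contribution and, from the term $-\int_{\tht^\e}f\overline{v_0}$, the contribution $C(\e\eta+\e^{1/2}\eta^{n/2})\|f\|_{L_2(\tht^\e)}\|g\|_{L_2(\Om^\e)}$; together with the remainder estimate these assemble into~(\ref{2.20}). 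Taking the supremum over $g$ with $\|g\|_{L_2(\Om^\e)}=1$ finishes the case $a\equiv0$. In the case $\eta(\e)\to0$, $a\not\equiv0$ one argues identically, using the rate~(\ref{2.18}) and, in addition, estimating $\int_{\p\tht^\e}a(\,\cdot\,,u_\e)\overline{v_\e}\,ds$ via~(\ref{2.2}) ($|a(\,\cdot\,,u_\e)|\leq a_0|u_\e|\leq a_0(|u_0|+|w_\e|)$ on $\p\tht^\e$), the trace bound of Lemma~\ref{lm3.1}, the $W_2^1$-estimate for $w_\e$, and the extra smallness $\eta\to0$; this is what produces~(\ref{2.4}).

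The main obstacle is not a single hard estimate but the sharpness of the bookkeeping: one has to carry the two distinct source norms $\|f\|_{L_2(\Om^\e)}$ and $\|f\|_{L_2(\tht^\e)}$ (the latter entering because $u_0$ feels $f$ on $\tht^\e$ while $u_\e$ does not) through the whole chain without losing a power, to prove the Poincar\'e-type inequality on $\tht^\e$ with precisely the exponents $\e\eta$ and $\e^{1/2}\eta^{n/2}$ while using only the $W_2^2$-regularity of $u_0$ and $v_0$ that is genuinely available (no $C^1$-bound on $\nabla u_0$ for $n\geq4$), and to keep track of the dependence on $\l$ so that $\l_0$ can be fixed once for both the direct and the adjoint families. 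The term with $a$ in the nonlinear case is the other delicate point: since $a(\,\cdot\,,u_\e)$ is only as regular as $u_\e$, the integration-by-parts-into-the-cavities device is unavailable there and one must fall back on the trace estimate of Lemma~\ref{lm3.1} together with the weaker $W_2^1$-rate for $w_\e$, which is the reason~(\ref{2.4}) comes out slightly weaker than~(\ref{2.20}).
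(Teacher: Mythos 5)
Your scheme hinges on solving the adjoint problem in the \emph{perforated} domain and invoking Theorem~\ref{th1} for that adjoint family to get $\|v_\e-v_0\|_{W_2^1(\Om^\e)}\leqslant C\mu(\e)\|g\|_{L_2(\Om^\e)}$, and this is exactly the step that fails. The variational adjoint of $\mathfrak{h}_0$ on $\Ho^1(\Om^\e,\p\Om)$ has as its natural condition on $\p\tht^\e$ not a homogeneous Neumann condition but $\frac{\p v_\e}{\p\mathrm{n}}+\big(\sum_j\overline{A_j}\nu_j\big)v_\e=0$: the first--order coefficients reappear as a zero--order boundary term. So even when $a\equiv0$ the adjoint family is a problem of type (\ref{2.5}) with a nontrivial boundary term $a_{\mathrm{adj}}(x,u)=\big(\sum_j\overline{A_j}\nu_j\big)u$. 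In the regime covered by (\ref{2.20}) condition (\ref{2.16}) may hold through $a\equiv0$ alone, with $\eta\not\to0$; then neither alternative of (\ref{2.16}) holds for the adjoint family, Theorem~\ref{th1} gives no rate for it, and its actual limit would in general carry a transmission condition as in (\ref{2.13}), (\ref{2.15}), which would require condition~\ref{A5} --- not assumed in Theorem~\ref{th3} (you would have to prove separately that the effective contribution of $\sum_j\overline{A_j}\nu_j$ vanishes, which you do not do). Even when $\eta\to0$, the adjoint rate is (\ref{2.18}), i.e. $\e\eta+\eta^{n-1}$, and its product with the direct rate $\e\eta+\e^{\frac12}\eta^{\frac n2}$ contains $\e^{\frac12}\eta^{\frac{3n}{2}-1}$, which is not dominated by $\e^2\eta^2+\e\eta^n$ for slowly decaying $\eta$, so your remainder term already overshoots (\ref{2.20}); moreover $a_{\mathrm{adj}}$ violates the bound $|\nabla_x a|\leqslant a_1|u|$ in (\ref{2.2}) uniformly (any extension of $\nu$ off $\p\tht^\e$ has gradient of order $(\e\eta)^{-1}$), so Theorem~\ref{th1} cannot be cited verbatim in any case. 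A secondary gap: in the case (\ref{2.4}) your estimate of the $a$--term via Lemma~\ref{lm3.1} and the $W_2^1$ rate produces the factor $\e\eta+\eta^{n-1}$, not $\e^2\eta+\eta^{n-1}$; for the part $a(\,\cdot\,,u_0)$ one needs the sharper bound of the trace on $\p\tht^\e$ through the $W_2^2$--norm of $u_0$ (and the difference $a(\,\cdot\,,u_\e)-a(\,\cdot\,,u_0)$ must be handled separately, quadratically via (\ref{2.18})).

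The paper avoids the adjoint perforated problem altogether: it takes only the homogenized adjoint $V_0$ of (\ref{7.5}), posed in all of $\Om$, which by $A_j\in W_\infty^1(\Om)$ and Lemma~\ref{lm3.2} lies in $W_2^2(\Om)$ with $\|V_0\|_{W_2^2(\Om)}\leqslant C\|h\|_{L_2(\Om^\e)}$, pairs it directly with $u_\e-u_0$ over $\Om^\e$, and estimates the resulting terms on $\p\tht^\e$: the term with the trace of $u_\e-u_0$ against derivatives of $V_0$ via the auxiliary functions $W_{k,i}^\e$ of (\ref{4.3}) and Lemma~\ref{lm3.3}, multiplied afterwards by the already proven $W_2^1$ rates (\ref{2.17}), (\ref{2.18}); the term $\big(\frac{\p u_0}{\p\mathrm{n}},V_0\big)_{L_2(\p\tht^\e)}$ by integrating (\ref{2.12}) over the cavities, which is where $\|f\|_{L_2(\tht^\e)}$ appears --- this part of your plan (integration by parts into the cavities plus the Poincar\'e--type bound on $\tht^\e$) does match the paper. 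Thus the squaring of the rate comes from multiplying the smallness of $V_0$ near $\tht^\e$ by the $W_2^1$ rate of $u_\e-u_0$, and no homogenization statement for an adjoint family is ever needed; repairing your proposal would require either reproving Theorem~\ref{th1} for the adjoint family with its boundary term (and only in the regime $\eta\to0$), or switching to the paper's direct pairing with $V_0$.
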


\begin{theorem}\label{th4}
Пусть $A_j\in W_\infty^1(\Om)$ и выполнены предположения~\ref{A1},~\ref{A2}, \ref{A3},~\ref{A4},~\ref{A5}. Тогда для решений задач (\ref{2.5}), (\ref{2.13}), (\ref{2.15}) верна оценка
\begin{equation}\label{2.9}
\|u_\e-u_0\|_{L_2(\Om^\e)}\leqslant     C(\e+\kappa(\e)) \|f\|_{L_2(\Omega^\e)}
+C
\big(\e\eta(\e)+\e^{\frac{1}{2}}\eta^{\frac{n}{2}}(\e)\big) \|f\|_{L_2(\tht^\e)},
\end{equation}
 где  константа $C$ не зависит от $\e$ и $f$, но зависит от $\lambda$.
\end{theorem}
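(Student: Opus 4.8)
\emph{Proof of Theorem~\ref{th4} (outline).}
The plan is to run the Aubin--Nitsche duality argument used to derive Theorem~\ref{th3} from Theorem~\ref{th1}, now over the transmission limit problem (\ref{2.13}), (\ref{2.15}) and with the corrector constructions from the proof of Theorem~\ref{th2}. By duality in $L_2(\Om^\e)$ it suffices to estimate $(u_\e-u_0,g)_{L_2(\Om^\e)}$ for an arbitrary $g$ with $\|g\|_{L_2(\Om^\e)}=1$, extended by zero onto $\tht^\e$. First I linearize the nonlinearity: by (\ref{2.2}) there exist $\mathfrak a_\e^{(1)},\mathfrak a_\e^{(2)}$ with $|\mathfrak a_\e^{(j)}|\leqslant a_0$ such that $a(\,\cdot\,,u_\e)-a(\,\cdot\,,u_0)=\mathfrak a_\e^{(1)}\RE(u_\e-u_0)+\mathfrak a_\e^{(2)}\IM(u_\e-u_0)$ on $\p\tht^\e$. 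Let $v_\e\in\Ho^1(\Om^\e,\p\Om)$ be the solution of the problem adjoint to (\ref{2.5}) with this linearized Robin term and right-hand side $g$, and let $v_0$ be the solution of the adjoint of the correspondingly linearized problem (\ref{2.13}), (\ref{2.15}) with right-hand side $g$. Since forming the adjoint differentiates the first-order coefficients, the hypothesis $A_j\in W_\infty^1(\Om)$ keeps the adjoint operators in the class (\ref{2.1}); hence Lemma~\ref{lm3.2} yields well-posedness of the two adjoint problems for $\l<\l_0$, elliptic regularity for the transmission problem (together with (\ref{2.2}) and $\a^0\in W_\infty^1(S)$ from condition~\ref{A5}) gives $\|v_0\|_{W_2^2(\Om\setminus S)}\leqslant C\|g\|_{L_2(\Om)}$ and the uniform control of $v_0$ in the layer $\vw$ adjacent to $S$, and adapting the proof of Theorem~\ref{th2} to the adjoint data gives $\|v_\e-v_0\|_{W_2^1(\Om^\e)}\leqslant C\big(\e^{\frac12}+\kappa(\e)\big)\|g\|_{L_2(\Om)}$.

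Because the linearization of $a$ is exact, inserting $u_\e-u_0\in\Ho^1(\Om^\e,\p\Om)$ as a test function into the adjoint identity for $v_\e$ and using the integral identity for $u_\e$ gives the relation
\[
(u_\e-u_0,g)_{L_2(\Om^\e)}=-\mathcal R_\e(v_\e),\qquad
\mathcal R_\e(v):=\int\limits_{\p\tht^\e}\Big(\frac{\p u_0}{\p\mathrm n}+a(\,\cdot\,,u_0)\Big)\overline v\,ds,
\]
so that the entire error equals the defect of $u_0$ in the Robin condition of (\ref{2.5}) tested against $v_\e$. I would then replace $v_\e$ in $\mathcal R_\e(v_\e)$ by $v_0+\widehat v_\e$, where $\widehat v_\e$ is the boundary-layer corrector built from $v_0$ exactly as in the proof of Theorem~\ref{th2}, estimating the remainder through the trace estimate of Lemma~\ref{lm3.1} and the bound for $\|v_\e-v_0-\widehat v_\e\|_{W_2^1(\Om^\e)}$. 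On $\mathcal R_\e(v_0+\widehat v_\e)$ I would use the divergence representation supplied by the flux fields $X_k$ of condition~\ref{A4} to transform the integral of $\p u_0/\p\mathrm n$ over $\p\tht^\e$, the density $\a^\e$ from (\ref{2.8}) to transform the integral of $a(\,\cdot\,,u_0)$, and the transmission condition in (\ref{2.15}) together with $\|\a^\e-\a^0\|_S\leqslant\kappa(\e)$ (condition~\ref{A5}, Lemma~\ref{lm3.0}) to cancel the leading contributions. What remains are corrector mismatches which, being paired in the duality against the equally small corrector for $v_\e$, enter squared, i.e.\ of order $O\big((\e^{\frac12}+\kappa(\e))^2\big)=O(\e+\kappa(\e))$ relative to $\|f\|$; the $\kappa(\e)$-term itself, however, appears as a direct pairing of $\a^\e-\a^0$ against the smooth pair $(u_0,v_0)$ and therefore stays linear in $\kappa(\e)$.

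To separate the two norms of $f$ in (\ref{2.9}) I decompose $f=f\mathds 1_{\Om^\e}+f\mathds 1_{\tht^\e}$ and, accordingly, $u_0=u_0'+u_0''$, both solving the linearized transmission problem with sources $f\mathds 1_{\Om^\e}$ and $f\mathds 1_{\tht^\e}$ respectively. The part of $\mathcal R_\e(v_\e)$ carrying $u_0'$ runs through the scheme above and produces the term $C(\e+\kappa(\e))\|f\|_{L_2(\Om^\e)}$, while $u_0''$ is generated by a source living inside the holes and near $S$, so that $\p u_0''/\p\mathrm n$ and $a(\,\cdot\,,u_0'')$ on $\p\tht^\e$, as well as $u_0''$ itself on $\tht^\e$, are governed by the small-holes and trace estimates of the next section (cf.\ Lemma~\ref{lm3.1}); these yield for the corresponding part of $\mathcal R_\e(v_\e)$ the bound $C\big(\e\eta(\e)+\e^{\frac12}\eta^{\frac n2}(\e)\big)\|f\|_{L_2(\tht^\e)}$, i.e.\ exactly the second term of (\ref{2.9}). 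Adding the two contributions and passing to the supremum over $g$ gives (\ref{2.9}).

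I expect the principal difficulty to be the presence of the nonlinear Robin term inside the duality scheme. The linearized coefficients $\mathfrak a_\e^{(j)}$ depend on $u_\e$, which is only $W_2^1$-regular, so the spatial smoothness $|\nabla_x a(x,u)|\leqslant a_1|u|$ of (\ref{2.2}) is lost for the adjoint problem; consequently both the $W_2^1$-closeness $\|v_\e-v_0\|_{W_2^1(\Om^\e)}$ and the homogenization of $\mathcal R_\e$ cannot simply be quoted from Theorem~\ref{th2} and must be carried out using only the bound $|\mathfrak a_\e^{(j)}|\leqslant a_0$ and the $\|\cdot\|_S$-based estimates (this is precisely the reason, indicated in the introduction, that the norm $\|\cdot\|_S$ and the constructions around Lemma~\ref{lm3.0} are needed), with the convergence $u_\e\to u_0$ in $W_2^1(\Om^\e)$ from Theorem~\ref{th2} used as a bootstrap to pass from the linearization along the segment $[u_0,u_\e]$ to the one about $u_0$. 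A related technical point is obtaining the uniform-in-$\e$, $W_2^2$-type control of $v_0$ near $S$ for the transmission problem carrying the Dirac interaction, which is less automatic here than in the $W_2^2$-regular situations of \cite{24}.
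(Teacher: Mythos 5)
Your overall duality framework is in the spirit of the paper's Section 7, but the concrete route you choose diverges from the paper's at the decisive point, and the divergence is where your argument has a real gap. You propose to linearize the Robin nonlinearity along $[u_0,u_\e]$, to pose an adjoint problem \emph{in the perforated domain} with the $\e$-dependent coefficients $\mathfrak a_\e^{(j)}$ on $\p\tht^\e$, and then to re-run the homogenization of Theorem~\ref{th2} for this adjoint pair $(v_\e,v_0)$ together with a boundary-layer corrector. As you yourself note at the end, this requires a convergence result $\|v_\e-v_0\|_{W_2^1(\Om^\e)}\leqslant C(\e^{1/2}+\kappa(\e))$ and a homogenization of the surface functional for coefficients that are merely $L_\infty$ in $x$ (they inherit only $|\mathfrak a_\e^{(j)}|\leqslant a_0$, not the structural bound $|\nabla_x a(x,u)|\leqslant a_1|u|$ from (\ref{2.2})). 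But the surface-averaging machinery of the paper --- Lemma~\ref{lm12.3} and, through it, Lemma~\ref{lm3.4} with the flux fields $X_k$ of condition~\ref{A4}, as well as the estimates (\ref{5.4})--(\ref{6.7b}) --- rests precisely on the $W_2^1$-regularity of $x\mapsto a(x,u(x))$, i.e.\ on the gradient bound you lose after linearization. So the key step of your plan cannot be quoted and is not supplied; flagging it as a ``principal difficulty'' leaves the proof incomplete. There is also a structural snag you do not address: the linearization through $\RE$ and $\IM$ is only real-linear, so the ``adjoint'' of the resulting boundary term is not a complex-sesquilinear object of the same type, and Lemma~\ref{lm3.2} does not directly apply to it.

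The paper avoids all of this by a much cruder choice of dual function: $V_0$ solves the plain adjoint Dirichlet problem (\ref{7.5}) in the \emph{unperforated} domain $\Om$, with no interface condition and no nonlinearity, so that $A_j\in W_\infty^1(\Om)$ gives $V_0\in W_2^2(\Om)$ with (\ref{7.10}). Two integrations by parts (against $u_\e$ over $\Om^\e$ and against $u_0$ over $\Om$, using (\ref{2.5}) and (\ref{2.13}), (\ref{2.15})) give the identity (\ref{7.19}), and every term there is estimated by already proven ingredients: the terms with $\p V_0/\p\mathrm n$ and $\p u_0/\p\mathrm n$ via the auxiliary functions $W^\e_{k,i}$ and Lemma~\ref{lm3.3} (estimates (\ref{7.11}), (\ref{7.12})); the nonlinear difference $\big(a(\,\cdot\,,u_\e)-a(\,\cdot\,,u_0),V_0\big)_{L_2(\p\tht^\e)}$ directly through the Lipschitz bound (\ref{3.2}), the trace Lemma~\ref{lm3.1} and the $W_2^1$-estimate of Theorem~\ref{th2} --- no adjoint linearization is ever needed; the pair $-(\a^0 a(\,\cdot\,,u_0),V_0)_{L_2(\tilde S)}-(a(\,\cdot\,,u_0),V_0)_{L_2(\p\tht^\e)}$ by repeating the surface homogenization of Section~6 with $(V_0,u_0)$ in place of $(v_\e,\tilde u_0)$, where the $W_2^2$-regularity of both factors upgrades the rate to $C\e$ via (\ref{7.21}), (\ref{7.23}), and Lemma~\ref{lm3.6} brings in $\kappa(\e)$ linearly; finally the $\|f\|_{L_2(\tht^\e)}$-term in (\ref{2.9}) comes from the single explicit term $(f,V_0)_{L_2(\tht^\e)}$ estimated in (\ref{7.27}), not from your decomposition $u_0=u_0'+u_0''$ with a source supported near $S$, for which no estimates are available in the paper. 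In short, your proposal replaces the paper's elementary dual function by a genuinely harder homogenization problem for a rough, $\e$-dependent adjoint, and that harder problem is exactly the part you leave unproved.
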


Кратко обсудим полученные результаты. Уравнение в задаче (\ref{2.5}) является линейным эллиптическим уравнением второго порядка, при этом на границах полостей ставится третье нелинейное граничное условие. Отверстия распределены вдоль поверхности $S$, которая должна быть достаточно регулярной. Помимо предполагаемой гладкости, регулярность включает в себя условие~\ref{A1}, которое фактически исключает нарастающие осцилляции этой поверхности в случае, когда она бесконечна. Для компактных поверхностей условие~\ref{A1} автоматически вытекает из её гладкости.

Согласно условию~\ref{A2}, между полостями имеется минимальное расстояние порядка $O(\e)$, которое гарантирует непересечение соседних полостей. Подчеркнём, что речь идет исключительно о минимальном расстоянии и не предполагается одновременное наличие и верхней оценки порядка $O(\e)$. В частности, допускается ситуация, когда расстояния между какими-то соседними полостями будут много больше $\e$. Линейные размеры всех полостей порядка $O(\e\eta(\e))$, что также гарантируется условием~\ref{A2}. Параметр $\eta$ при этом   описывает отношение между характерными размерами полостей и расстояния между ними.

Форма границ полостей и их распределение вдоль многообразия произвольные. Никаких существенных условий на структуру чередования не налагается. Помимо естественных ограничений в условии~\ref{A2}, также налагаются условия~\ref{A3} и~\ref{A4}. Оба условия означают определенную регулярность границ полостей; вопрос о том, возможно ли одно условие вывести из другого или заменить их на единое более простое условие, остался открытым.

При выполнении условий ~\ref{A1},~\ref{A2},~\ref{A3},~\ref{A4} и одного из условий (\ref{2.16}) усреднённая задача для (\ref{2.5}) имеет вид (\ref{2.12}).
В этом случае полости пропадают вместе с многобразием $S$, вдоль которого они расположены и усреднённая задача (\ref{2.12}) никак не зависит от выбора многообразия $S$. При выполнении условий~\ref{A1},~\ref{A2},~\ref{A3},~\ref{A4} и дополнительного условия~\ref{A5} усреднённая задача для (\ref{2.5}) имеет вид (\ref{2.13}),
(\ref{2.15}). Теперь усреднённая задача зависит от выбора многообразия $S$, на котором возникает граничное условие, которое уместно трактовать как нелинейное дельта-взаимодействие.  Коэффициент в этом условии определяется геометрией и распределением полостей. А именно, функция $\a^\e$  зависит от распределения проекций точек $M_{k,\bot}^\e$ на поверхности $S$ и от площадей границ полостей $\p\om_{k,\e}$. При малых $\e$ эта функция должна оказываться близкой к некоторой функции $\a$ в смысле нормы $\|\,\cdot\,\|_S$, то есть, в условии~\ref{A5} речь идёт об усреднении функции $\a^\e$ в смысле  нормы $\|\,\cdot\,\|_S$ и это налагает определенные ограничения на степень непериодичности распределения точек $M_k^\e$ и произвол в выборе полостей $\om_{k,\e}$. При этом следует подчеркнуть, что форма полостей   оказывается неважной, а роль играют лишь площади поверхностей их границ, так как именно они  входят в определение функции $\a^\e$. Примеры возможных непериодических распределений и соответствующие им функции $\a^\e$ и $\a^0$ мы обсудим в следующем отдельном параграфе, сейчас же лишь отметим, что норму $\|\,\cdot\,\|_S$ можно рассматривать как норму мультипликатора из пространства $W_2^{\frac{1}{2}}(S)$ в
$W_2^{-\frac{1}{2}}(S)$.

Теорема \ref{th1} утверждает сходимость решения задачи (\ref{2.5}) к решению задачи (\ref{2.12}) в $W_2^1$ равномерно по правой части уравнения. Теорема \ref{th2} утверждает аналогичную сходимость решения задачи  (\ref{2.5}) к решению задачи  (\ref{2.13}),
(\ref{2.15}). Помимо сходимости, теоремы \ref{th1} и \ref{th2} дают оценки скорости сходимости, см. неравенства (\ref{2.17}), (\ref{2.18}), (\ref{2.19}). В частном случае, когда краевое условие на границах полостей является линейным, утверждения теорем \ref{th1} и \ref{th2} означают наличие равномерной резольвентной сходимости в смысле нормы операторов, действующих из $L_2$ в $W_2^1$, и дают оценки скорости сходимости в смысле операторной нормы. С этой точки зрения наши основные результаты оказываются того же характера, что и известные результаты об операторных оценках в линейных задачах граничного усреднения \cite{15},  \cite{16}, \cite{17},  \cite{18},  \cite{19},  \cite{20},  \cite{21},  \cite{22},  \cite{23},  \cite{24}. По сравнению с цитированными работами, теоремы~\ref{th3},~\ref{th4} даёт качественно новый результат об оценке разности решений в $L_2$-норме. Оценки в этих теоремах устанавливают более высокую скорость за счёт ослабления нормы. Здесь мы имеем ввиду первые слагаемые в правых частях оценок (\ref{2.20}), (\ref{2.4}), (\ref{2.9}). Вторые слагаемые имеют тот же порядок малости, что в оценках из теорем~\ref{th1},~\ref{th2}. Однако следует подчеркнуть, что данные вторые слагаемые содержат нормы $\|f\|_{L_2(\tht^\e)}$, которые определяются значениями функции $f$ внутри отверстий -- эта функция исходно задаётся сразу на всей области $\Om$ лишь для упрощения формулировки усреднённой задачи. Вместе с тем, эти значения \textit{не участвуют} в исходной задаче (\ref{2.5}), так как она ставится в перфорированной области. В частности, можно зафиксировать достаточно малое $\e$ и выбрать произвольную функцию $f\in L_2(\Om^\e)$, а затем \textit{продолжить её нулём внутрь отверстий} $\tht^\e$. Тогда для данного значения $\e$ будут верны все наши четыре основные теоремы, причём в оценках теорем~\ref{th3},~\ref{th4} вторые слагаемые в правых частях в этом случае пропадут.

Для доказательства теорем~\ref{th3},~\ref{th4} мы используем подход, изначально предложенный в работах \cite{Sen1}, \cite{Sen2}, см. также \cite{Pas1}, \cite{Pas2}. Подчеркнём вместе с тем, что техническая реализация этого подхода в нашем случае   отличается от цитированных работ, что связано с формальной несамосопряжённостью дифференциальных выражений в уравнениях в (\ref{2.5}), (\ref{2.12}), (\ref{2.13}), а также с нелинейностью краевых условий в (\ref{2.5}), (\ref{2.15}).

\section{Примеры перфораций}

В настоящем параграфе мы обсуждаем норму $\|\cdot\|_S$, определенную в (\ref{2.22}),  условие~\ref{A5} и примеры выбора форм и распределений полостей $\om_{k,\e}$, которые обеспечивают выполнение данного условия.

\subsection{Корректная определённость нормы $\|\,\cdot\,\|_S$}

В настоящем разделе мы доказываем, что соотношение  (\ref{2.22})
корректно определяет норму $\|\,\cdot\,\|_S$.

\begin{lemma}\label{lm3.0}
Формула (\ref{2.22}) определяет норму в пространстве $L_\infty(S)$. Для произвольной функции $\Phi\in W_2^{\frac{1}{2}}(S)$ верны равенство и оценки
\begin{gather}\label{12.32}
(\a\Phi,U_{\a\Phi}^N)_{L_2(S)}=\|U_{\a\Phi}^N\|_{W_2^1(\vw)}^2,
\\
\label{12.33}
\|\a\|_S\leqslant C\|\a\|_{L_\infty(S)},\qquad
\big|(\a u,v)_{L_2(S)}\big|\leqslant  \|\a\|_S \|u\|_{W_2^1(\vw)} \|v\|_{W_2^1(\vw)},
\end{gather}
где  $u$, $v$ -- произвольные функции из $W_2^1(\vw)$, а $C$ -- некоторая константа, не зависящая от $\a$.
\end{lemma}

\begin{proof}
Для проверки равенства (\ref{12.32}) достаточно выписать определение обобщенного решения задачи (\ref{2.23}), взяв $U_{\a\Phi}^N$ в качестве пробной функции.

Докажем, что правая часть в (\ref{2.22}) определена корректно и является нормой. Из равенства (\ref{12.32}) и стандартных теорем об оценке следа функции следует, что
\begin{align*}
\|U_{\a\Phi}^N\|_{W_2^1(\vw)}^2\leqslant &\|\a\Phi\|_{L_2(S)} \|U_{\a\Phi}^N\|_{L_2(S)} \leqslant C\|\a\|_{L_\infty(S)} \|\Phi\|_{L_2(S)} \|U_{\a\Phi}^N\|_{W_2^1(\vw)}
\\
\leqslant & C\|\a\|_{L_\infty(S)} \|U_\Phi^D\|_{W_2^1(\vw)} \|U_{\a\Phi}^N\|_{W_2^1(\vw)},
\end{align*}
где $C$ -- некоторые константы, не зависящие от $\Phi$, $\a$, $U_\Phi^D$, $U_{\a\Phi}^N$. Из полученной оценки вытекает, что отношение в правой части (\ref{2.22}) ограничено величиной $C\|\a\|_{L_\infty(S)}^2$ и потому супремум в (\ref{2.22}) существует. Кроме того, верна первая оценка в (\ref{12.33}).

Очевидно, что $\|\a\|_S=0$ если и только если $\a=0$. Однородность нормы и неравенство треугольника легко выводятся из очевидных равенств
$U_{C\a\Phi}=C U_{\a\Phi}$ и $U_{(\a_1+\a_2)\Phi}=U_{\a_1\Phi}+U_{\a_2\Phi}$. Поэтому формула (\ref{2.22}) действительно определяет норму.

Докажем теперь вторую оценку в (\ref{12.33}). Пусть $U_v^D$ -- решение задачи (\ref{2.11}), где в качестве правой части краевого условия на $S$ взят след функции $v$ на $S$. Тогда из определения обобщенного решения задачи (\ref{2.21}) следует, что
\begin{equation*}
(U_v^D,U_v^D-v)_{W_2^1(\vw)}=0,\qquad \|U_v^D\|_{W_2^1(\vw)}^2=(U_v^D,v)_{W_2^1(\vw)}.
\end{equation*}
Используя эти равенства, выводим:
\begin{equation*}
0\leqslant  \|v-U_v^D\|_{W_2^1(\vw)}^2=(v,v-U_v^D)_{W_2^1(\vw)},
\end{equation*}
а потому
\begin{equation}\label{12.34}
\|v\|_{W_2^1(\vw)}^2\geqslant (v,U_v^D)_{W_2^1(\vw)}=\|U_v^D\|_{W_2^1(\vw)}^2.
\end{equation}

Из определения обобщённого решения задачи (\ref{2.23}) с пробной функцией $U_v^D$ следует равенство $(\a u,v)_{L_2(S)}=(U_{\a u}^N,U_v^D)_{W_2^1(\vw)}$. Поэтому в силу неравенства Коши-Буняковского, равенства (\ref{12.32}), оценки (\ref{12.34}) и определения нормы $\|\a\|_S$ получаем:
\begin{equation*}
\frac{\big|(\a u,v)_{L_2(S)}\big|}{\|v\|_{W_2^1(\vw)}\|u\|_{W_2^1(\vw)}} \leqslant \frac{\big|(U_{\a u}^N, U_v^D)_{W_2^1(\vw)}\big|}{\|U_v^D\|_{W_2^1(\vw)}\|u\|_{W_2^1(\vw)}} \leqslant \frac{\|U_{\a u}^N\|_{W_2^1(\vw)}}{\|u\|_{W_2^1(\vw)}}\leqslant \|\a\|_S.
\end{equation*}
 Отсюда уже вытекает вторая оценка в (\ref{12.33}). Лемма доказана.
\end{proof}

Подчеркнём, что данная лемма не утверждает, что пространство $L_\infty(S)$ полное относительно нормы $\|\cdot\|_S$.
Также отметим, что данная норма по сути является нормой мультипликатора из пространства $W_2^{\frac{1}{2}}(S)$ в
$W_2^{-\frac{1}{2}}(S)$.

\begin{lemma}\label{lm12.9}
Пусть выполнены условия~\ref{A1},~\ref{A2},~\ref{A3},~\ref{A4}. Тогда
площади $|\p\om_{k,\e}|$ ограничены равномерно по $\e$ и $k$.
\end{lemma}
\begin{proof}
В определении (\ref{12.85}) обобщённого решения задачи (\ref{2.6}) выберем $\psi\equiv 1$. Тогда с учётом условия (\ref{2.7}) 
получим: 
\begin{equation*}
\int\limits_{\p B_{b_*R_2}(0)}\phi_k\,ds=|\p \om_{k,\e}|.
\end{equation*}
Так как по условию~\ref{A4} функция $\phi_k$ ограничена в норме $L_\infty(\p B_{b_*R_2}(0))$ равномерно по $k$ и $\e$, то из полученного равенства немедленно следует, что все величины $|\p \om_{k,\e}|$ ограничены равномерно по $k$ и $\e$. Лемма доказана.
\end{proof}

\begin{lemma}\label{lm12.7}
Пусть условие~\ref{A5} выполнено для некоторой перфорации, удовлетворяющей условиям~\ref{A1},~\ref{A2},~\ref{A3},~\ref{A4}. Тогда для любой другой перфорации, удовлетворяющей тем же условиям и описываемой точками $\tilde{M}_k^\e$, $k\in\mathbb{M}^\e$ и полостями $\tilde{\om}_{k,\e}$, $k\in\mathbb{M}^\e$ такими, что выполнена равномерная по $k$ и $\e$ оценка
\begin{equation*}
\e^{-1}\big|\tilde{M}_k^\e-M_k^\e\big| +\big||\p\om_{k,\e}|-|\p\tilde{\om}_{k,\e}|\big| \leqslant \mu(\e),
\end{equation*}
где $\mu(\e)$ -- некоторая функция, бесконечно малая при $\e\to+0$,  условие~\ref{A5} выполнено с той же функцией $\a^0$ и с заменой $\kappa(\e)$ на $\kappa(\e)+C\mu(\e)\eta^{n-1}(\e)$, где $C$ -- некоторая константа, не зависящая от $\e$.
\end{lemma}

\begin{proof}
Пусть $\tilde{\a}_\e$ -- это функция, построенная по формуле (\ref{2.8}) для перфорации, описываемой точками $\tilde{M}_k^\e$ и полостями $\tilde{\om}_{k,\e}$. В силу леммы Адамара выполнено
\begin{equation*}
\left| \z\left(\frac{|
x-M_{k,\bot}^\e|}{\e  R_2}\right) - \z\left(\frac{|
x-\tilde{M}_{k,\bot}^\e|}{\e  R_2}\right)
\right| \leqslant C\e^{-1}|M_k^\e-\tilde{M}_k^\e|,
\end{equation*}
где $C$ -- некоторая константа, не зависящая от $\e$ и $k$. Тогда из условия леммы сразу получаем оценку
\begin{equation*}
\|\a^\e-\tilde{\a}^\e\|_{L_\infty(S)}\leqslant C\mu(\e)\eta^{n-1}(\e),
\end{equation*}
где константа $C$ не зависит от $\e$. Учитывая теперь первую оценку в (\ref{12.33}),   легко выводим неравенство
\begin{equation*}
\|\tilde{\a}^\e-\a^0\|_S\leqslant \kappa(\e) + C\mu(\e) \eta^{n-1}(\e),
\end{equation*}
которое завершает доказательство леммы.
\end{proof}

Последняя лемма существенно расширяет класс перфораций, для которых выполнено условие~\ref{A5}. А именно, если это условие выполнено для какой-то перфорации, определяемой набором точек $M_k^\e$ и областей $\p\om_k^\e$, то оно выполнено с той же самой функцией $\a^0$ для перфораций, полученных произвольными малыми смещениями точек $M_k^\e$ и вариацией площадей $|\p\om_k^\e|$. Подчеркнём ещё, что форма полостей не играет никакой роли, а важна лишь площадь поверхности границы полости. Этот факт предоставляет большой произвол в выборе областей $\om_k^\e$.

\subsection{Примеры редко распредёленных перфораций}

В настоящем разделе мы обсуждаем два достаточно общих примера перфораций, для которых условие~\ref{A5} гарантированно выполняется с функцией $\a^0=0$.

Первый пример является прямым следствием лемм~\ref{lm3.0},~\ref{lm12.9}. А именно, пусть выполнены условия~\ref{A1},~\ref{A2},~\ref{A3},~\ref{A4}. Тогда
из определения функции $\a^\e$ и лемм~\ref{lm3.0},~\ref{lm12.9} немедленно вытекает равномерная по $\e$  оценка:
\begin{equation}\label{12.24}
\|\a^\e\|_S\leqslant \|\a^\e\|_{L_\infty(S)}\leqslant C\eta^{n-1}(\e),
\end{equation}
где $C$ -- некоторые константы, не зависящие от $\e$.  Следовательно,   если $\eta\to0$, то для любой перфорации, удовлетворяющей условиям~\ref{A1}, \ref{A2}, \ref{A3}, \ref{A4}, условие~\ref{A5} выполняется с $\a=0$. Отметим, что этот результат частично воспроизводит утверждение теоремы~\ref{th1} для случая $\eta(\e)\to0$.

Определим теперь покрытие поверхности $S$. Для этого выберем точки $\rL_p\in S$, $p\in\mathds{N}$ и фиксированное число $R_3>0$ такие, что \begin{equation}\label{12.41}
S\subset \bigcup\limits_{k\in\mathds{N}} B_{R_3}(\rL_p),\qquad \frac{6}{5}R_3\leqslant \inf\limits_{p\ne j} |\rL_p-\rL_j|\leqslant \frac{8}{5}R_3.
\end{equation}
Ясно, что такое покрытие всегда существует
с некоторым $R_3$. Также в силу неравенства в (\ref{12.41}) очевидно, что каждая точка поверхности $S$ попадает в конечное число шаров $B_{R_3}(\rL_k)$ и это число ограничено равномерно по всем точкам поверхности $S$.

Положим:
\begin{equation}\label{12.45}
N_\e:=\sup\limits_{p\in\mathds{N}} \#\big\{k:\, M_{k,\bot}^\e\in S\cap B_{R_3}(\rL_p)\big\},
\end{equation}
где символ $\#$ обозначает число элементов во множестве. Отметим, что данную величину можно интерпретировать как плотность распределения точек $M_k^\e$, так как она характеризует количество проекций $M_{k,\bot}^\e$ этих точек на каждом куске $S\cap B_{R_3}(\rL_p)$ поверхности.

Наш второй пример основан на следующей вспомогательной лемме.

\begin{lemma}\label{lm12.6}
Справедлива оценка
\begin{equation*}
\|\a^\e\|_S\leqslant C\e\eta^{n-1} N_\e,
\end{equation*}
где $C$ -- некоторая константа, не зависящая от $\e$.
\end{lemma}

\begin{proof}
Произвольно фиксируем точку $\rL_p\in S$ и произвольно выберем точку $M_{k,\bot}^\e\in B_{R_3}(\rL_p)\cap S$. В окрестности поверхности $S$  введём локальные переменные $(s,\tau)$, где $s\in S$.  Обозначим:
\begin{align*}
&S_k^\e:=\big\{x\in S:\, |
x - M_{k,\bot}^\e|<\e R_2\big\},
 \\
 &\vw_p:=\left\{x\in\Om:\,   s\in B_{2R_3}\cap S,\ 0<\tau<\frac{\tau_0}{2}\right\}.
\end{align*}

Пусть $u\in W_2^1(\vw)$ -- произвольная функция. Ключевым шагом в доказательстве леммы является проверка следующей оценки:
\begin{equation}\label{12.42}
\|u\|_{L_2(S_k^\e)}\leqslant C\e^{\frac{1}{2}} \|u\|_{W_2^1(\vw_p)}.
\end{equation}
Здесь и всюду далее в доказательстве символом $C$ обозначаем различные несущественные константы, не зависящие от выбора функции $u$, параметров $\e$, $k$, $p$ и пространственных переменных.

Докажем оценку (\ref{12.42}). Функцию $u$ продолжим чётным образом по переменной $\tau$, а именно, положим $u(s,\tau):=u(s,-\tau)$. Продолжение очевидно оказывается элементом пространства $W_2^1(\vw^+)$, $\vw^+:=\{x:\, |\tau|<\frac{\tau_0}{2}\}$ и верна оценка
\begin{equation}\label{12.43}
\|u\|_{W_2^1(\vw^+_p)}\leqslant C\|u\|_{W_2^1(\vw_p)},\qquad \vw^+_p:=\left\{x:\, s\in B_{2R_3}\cap S,\ |\tau|<\frac{\tau_0}{2}\right\}.
\end{equation}

Пусть $\chi=\chi(t)$ -- бесконечно дифференцируемая срезающая функция, равная единице при $t<1$ и нулю при $t>2$. Для $s\in S_k^\e$ из очевидного равенства
\begin{equation*}
u(s)=\int\limits_{2\e}^{0} \frac{\p\ }{\p\tau} u(x)\chi \left(\frac{\tau}{\e}\right)\,d\tau
\end{equation*}
и неравенства Коши-Буняковского легко выводим, что
\begin{equation*}
|u(s)|^2\leqslant C \int\limits_{0}^{2\e} \left(\e \left|\frac{\p u}{\p\tau}(x)\right|^2 + \e^{-1}|u(x)|^2\right)\,d\tau.
\end{equation*}
Интегрируя эту оценку по $S_k^\e$, с учётом условия \ref{A1} получаем:
\begin{equation}\label{12.44}
\|u\|_{L_2(S_k^\e)}^2\leqslant C\left(\e\|\nabla u\|_{L_2(\vw_{k,\e})}^2 + \e^{-1}\|u\|_{L_2(\vw_{k,\e})}^2 \right),
\end{equation}
где
\begin{equation*}
\vw_{k,\e}:=\left\{x\in\Om:\,
s\in S_k^\e,\, |\tau|<\frac{\tau_0}{2}\right\}.
\end{equation*}
Применяя теперь оценку
\begin{equation*}
\|u\|_{L_2(\vw_{k,\e})}\leqslant C \e\|u\|_{W_2^1(\vw_p^+)},
\end{equation*}
вытекающую из леммы~2.1 в \cite{32}, из (\ref{12.44}) получаем неравенство (\ref{12.42}).

Используя теперь оценки (\ref{12.42}), (\ref{12.43}) и определение функции $\a^\e$, из равенства (\ref{12.32}) и свойств покрытия поверхности $S$ шарами $B_{2R_3}(\rL_p)$ выводим:
\begin{align*}
\|U_{\a^\e\Phi}^N\|_{W_2^1(\vw)}^2=&(\a^\e\Phi,U_{\a^\e\Phi}^N)_{L_2(S)}
\leqslant   C\sum\limits_{k\in\mathbb{M}^\e} \|\Phi\|_{L_2(S_k^\e)} \|U_{\a^\e\Phi}^N\|_{L_2(S_k^\e)}
\\
\leqslant & C\e \sum\limits_{k\in\mathbb{M}^\e}
\|U_{\Phi}^D\|_{W_2^1(\vw_p)}
\|U_{\a^\e \Phi}^N\|_{L_2(\vw_p)},
\end{align*}
где для каждого $k$ параметр $p$ выбран из условия $M_{k,\bot}^\e\in B_{R_3}(\rL_p)\cap S$. С учётом такого выбора $p$ и определения числа $N_\e$ в (\ref{12.45}), продолжим оценки:
\begin{align*}
\|U_{\a^\e\Phi}^N\|_{W_2^1(\vw)}^2
 \leqslant & C\e N_\e \sum\limits_{p\in\mathds{N}} \|U_{\Phi}^D\|_{W_2^1(\vw_p)}
\|U_{\a^\e \Phi}^N\|_{L_2(\vw_p)}
\\
 \leqslant & C\e N_\e   \|U_{\Phi}^D\|_{W_2^1(\vw^+)}
\|U_{\a^\e \Phi}^N\|_{L_2(\vw^+)}
 \\
\leqslant &  C\e N_\e   \|U_{\Phi}^D\|_{W_2^1(\vw)}
\|U_{\a^\e \Phi}^N\|_{L_2(\vw)}.
\end{align*}
Подставляя эту оценку в (\ref{2.22}), приходим к утверждению леммы.
\end{proof}

Из доказанной леммы следует, что если $\e N_\e\to +0$ при $\e\to+0$, то условие~\ref{A5} выполнено с $\a^0=0$. Описанное условие на $N_\e$ означает, что   плотность распределения точек $M_k^\e$ достаточно мала. Подчеркнём, что это условие не означает, что расстояния между точками $M_k^\e$ много больше, чем размеры  полостей. Такую ситуацию мы описываем с помощью параметра $\eta(\e)$, предполагая, что $\eta(\e)\to+0$ при $\e\to+0$. Лемма~\ref{lm12.6} в первую очередь ориентирована на ситуации, когда в окрестности отдельных частей поверхности $S$ точки $M_k^\e$ расположены друг от друга на расстояниях того же порядка малости, что и размеры полостей, но при этом их количество в окрестности кусков $S\cap B_{R_3}(\rL_p)$ мало. В качестве примера можно упомянуть ситуацию, когда точки $M_k^\e$ распределены небольшими кластерами: точки $M_k^\e$ расположены в окрестности кусков
поверхности линейного размера порядка $O(\e^{1-\b})$ с $\b<\frac{1}{d-1}$, а сами куски находятся друг от друга на расстоянии порядка $O(1)$.

\subsection{Периодические и локально-периодические перфорации}

Важным примером перфораций являются периодические и локально-{пе\-рио\-ди\-чес\-кие} перфорации. В свете имеющихся классических результатов о сильной и слабой сходимости решений задач в областях, перфорированных вдоль многообразий \cite{1}, \cite{2}, \cite{3}, \cite{4}, \cite{5}, \cite{6}, \cite{7}, необходимо гарантировать выполнение основных результатов о равномерной сходимости по крайней мере для периодических перфораций. Этому и посвящён настоящий раздел.

Начнём со вспомогательной леммы, которая далее будет играть ключевую роль в исследовании случаев периодических и локально-периодических перфораций.

\begin{lemma}\label{lm12.4}
Пусть существуют функции $\a^0\in W_\infty^1(S)$, $\Psi^\e\in W_\infty^2(\vw)$ такие, что
\begin{equation}\label{12.46}
\begin{aligned}
\|\Psi^\e\|_{L_\infty(\vw)} &+ \|\D \Psi^\e\|_{L_\infty(\vw)} +
\left\|\frac{\p\Psi^\e}{\p\nu}\right\|_{L_\infty(\p\vw\setminus S)}
\\
&+ \left\|\frac{\p\Psi^\e}{\p\tau}+\a^\e-\a^0\right\|_{L_\infty(S)}
=:\mu(\e)\to+0,\quad \e\to+0.
\end{aligned}
\end{equation}
Тогда существует константа $C$, не зависящая от $\e$ такая, что
\begin{equation}\label{12.47}
\|\a^\e-\a^0\|_S\leqslant C\mu^\frac{1}{2}(\e).
\end{equation}
\end{lemma}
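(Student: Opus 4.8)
\emph{Plan.} The idea is to estimate the numerator in the supremum \eqref{2.22} directly. Fix $\Phi\in W_2^{\frac12}(S)$, $\Phi\ne0$, put $W:=U_{(\a^\e-\a^0)\Phi}^N$, and set $r^\e:=\frac{\p\Psi^\e}{\p\tau}\big|_S+\a^\e-\a^0$, so that by \eqref{12.46} $\|r^\e\|_{L_\infty(S)}\le\mu(\e)$ and $\a^\e-\a^0=-\frac{\p\Psi^\e}{\p\tau}\big|_S+r^\e$ on $S$. Testing the integral identity of the Neumann problem \eqref{2.23} for $W$ against $W$ itself, exactly as in \eqref{12.32}, gives
\[
\|W\|_{W_2^1(\vw)}^2=\big((\a^\e-\a^0)\Phi,W\big)_{L_2(S)}=(r^\e\Phi,W)_{L_2(S)}-\Big(\frac{\p\Psi^\e}{\p\tau}\Phi,W\Big)_{L_2(S)}.
\]
The first term is harmless: $\big|(r^\e\Phi,W)_{L_2(S)}\big|\le\mu(\e)\|\Phi\|_{L_2(S)}\|W\|_{L_2(S)}\le C\mu(\e)\|U_\Phi^D\|_{W_2^1(\vw)}\|W\|_{W_2^1(\vw)}$, by the trace inequality and the bound $\|\Phi\|_{L_2(S)}\le C\|U_\Phi^D\|_{W_2^1(\vw)}$ already used in the proof of Lemma~\ref{lm3.0}. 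So everything reduces to estimating $-\big(\frac{\p\Psi^\e}{\p\tau}\Phi,W\big)_{L_2(S)}=-\int_S\frac{\p\Psi^\e}{\p\tau}\,\big(U_\Phi^D W\big)\big|_S\,ds$, where $\Phi=U_\Phi^D\big|_S$.

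To do this I would move the normal derivative of $\Psi^\e$ off $S$: since $\frac{\p\Psi^\e}{\p\tau}\big|_S=-\frac{\p\Psi^\e}{\p\nu}\big|_S$ ($\nu$ the outward normal of $\vw$) and $\Psi^\e\in W_\infty^2(\vw)$, Green's formula applied to $\Psi^\e$ and $g:=U_\Phi^D W$ yields
\[
-\int_S\frac{\p\Psi^\e}{\p\tau}\,g\,ds=\int_\vw\nabla\Psi^\e\cdot\nabla g\,dx+\int_\vw g\,\D\Psi^\e\,dx-\int_{\p\vw\setminus S}\frac{\p\Psi^\e}{\p\nu}\,g\,ds.
\]
By \eqref{12.46} one has $\|\D\Psi^\e\|_{L_\infty(\vw)}\le\mu(\e)$ and $\big\|\frac{\p\Psi^\e}{\p\nu}\big\|_{L_\infty(\p\vw\setminus S)}\le\mu(\e)$, so the last two integrals are $\le C\mu(\e)\|U_\Phi^D\|_{W_2^1(\vw)}\|W\|_{W_2^1(\vw)}$ after Cauchy--Schwarz and a trace inequality. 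The essential term is $\int_\vw\nabla\Psi^\e\cdot\nabla(U_\Phi^D W)\,dx$, which I would bound by $\|\nabla\Psi^\e\|_{L_2(\vw)}\,\|\nabla(U_\Phi^D W)\|_{L_2(\vw)}$. For the first factor one has the energy identity
\[
\|\nabla\Psi^\e\|_{L_2(\vw)}^2=-\int_\vw\Psi^\e\D\Psi^\e\,dx+\int_{\p\vw\setminus S}\Psi^\e\frac{\p\Psi^\e}{\p\nu}\,ds-\int_S\Psi^\e\frac{\p\Psi^\e}{\p\tau}\,ds,
\]
whose first two integrals are $O(\mu^2(\e))$ and whose last is $\le\|\Psi^\e\|_{L_\infty(S)}\big(\|r^\e\|_{L_\infty(S)}|S|+\|\a^\e-\a^0\|_{L_1(S)}\big)\le C\mu(\e)$, since $\|\a^\e\|_{L_\infty(S)}\le C\eta^{n-1}(\e)\le C$ by \eqref{12.24} and $\a^0\in W_\infty^1(S)$, so $\|\a^\e-\a^0\|_{L_1(S)}\le C$. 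Thus $\|\nabla\Psi^\e\|_{L_2(\vw)}\le C\mu^{\frac12}(\e)$ --- this is precisely where the square root in \eqref{12.47} comes from.

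The main obstacle is the second factor, $\nabla(U_\Phi^D W)=W\nabla U_\Phi^D+U_\Phi^D\nabla W$: it need not even lie in $L_2(\vw)$ with a bound in terms of $\|U_\Phi^D\|_{W_2^1(\vw)}$ and $\|W\|_{W_2^1(\vw)}$, because in high dimensions a product of $W_2^1(\vw)$--functions is not $W_2^1(\vw)$; moreover one has to be careful, since a further integration by parts in $\int_\vw\nabla\Psi^\e\cdot\nabla(U_\Phi^D W)\,dx$ merely reproduces $\|W\|_{W_2^1(\vw)}^2$ up to errors that are again only $O\big(\mu(\e)\|W\|_{W_2^1(\vw)}\|U_\Phi^D\|_{W_2^1(\vw)}\big)$, so no genuine information is obtained from such cancellations alone. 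I would resolve this by using the extra smoothness of $W$: as $(\a^\e-\a^0)\Phi\in L_2(S)$, elliptic regularity for the Neumann problem gives $W\in W_2^{3/2}(\vw)$ with $\|W\|_{W_2^{3/2}(\vw)}\le C\|(\a^\e-\a^0)\Phi\|_{L_2(S)}\le C\|U_\Phi^D\|_{W_2^1(\vw)}$, hence $W\in L_{\frac{2n}{n-3}}(\vw)$, $\nabla W\in L_{\frac{2n}{n-1}}(\vw)$; together with $U_\Phi^D\in L_{\frac{2n}{n-2}}(\vw)$ this puts $\nabla(U_\Phi^D W)$ into $L_q(\vw)$ for some $q<2$ with norm $\le C\|U_\Phi^D\|_{W_2^1(\vw)}^2$, and it is paired with $\nabla\Psi^\e\in L_{q'}(\vw)$, whose norm is small by Gagliardo--Nirenberg interpolation between the $L_2$--bound $\le C\mu^{\frac12}(\e)$ above and the uniform bounds $\|\nabla\Psi^\e\|_{L_p(\vw)}\le C_p$ coming from the $L_p$--theory for the Neumann problem (using $\|\D\Psi^\e\|_{L_\infty}\le\mu(\e)$ and $\big\|\frac{\p\Psi^\e}{\p\nu}\big\|_{L_\infty(\p\vw)}\le C$); getting the \emph{sharp} power $\mu^{\frac12}(\e)$ here is the delicate point, and I expect the bulk of the technical work to be concentrated in it. Collecting all contributions,
\[
\|W\|_{W_2^1(\vw)}^2\le C\mu(\e)\|W\|_{W_2^1(\vw)}\|U_\Phi^D\|_{W_2^1(\vw)}+C\mu^{\frac12}(\e)\|U_\Phi^D\|_{W_2^1(\vw)}^2,
\]
so Young's inequality gives $\|W\|_{W_2^1(\vw)}\le C\mu^{\frac12}(\e)\|U_\Phi^D\|_{W_2^1(\vw)}$, and taking the supremum over $\Phi$ in \eqref{2.22} yields \eqref{12.47}. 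The formal manipulations with products are justified by first taking $\Phi$ in the dense set $W_\infty^1(S)\subset W_2^{\frac12}(S)$ and passing to the limit, all constants being independent of $\Phi$.
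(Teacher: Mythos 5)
Your reduction is sound up to the point where everything is loaded onto the volume term $\int_\vw\nabla\Psi^\e\cdot\nabla(U_\Phi^D W)\,dx$, and exactly there the argument has a genuine gap. Note that only half of this term is really problematic: writing $\nabla(U_\Phi^D W)=U_\Phi^D\nabla W+W\nabla U_\Phi^D$, the part containing $U_\Phi^D\nabla W$ can be closed at rate $\mu(\e)$ with no regularity theory at all, because $W$ satisfies the Neumann-type identity $(W,\vp)_{W_2^1(\vw)}=\big((\a^\e-\a^0)\Phi,\vp\big)_{L_2(S)}$ for \emph{every} $\vp\in W_2^1(\vw)$, in particular for $\vp=\Psi^\e U_\Phi^D$, and $\|\Psi^\e\|_{L_\infty(\vw)}\leqslant\mu(\e)$. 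The part containing $W\nabla U_\Phi^D$ admits no analogous treatment: $U_\Phi^D$ solves a Dirichlet problem, so testing its weak formulation with $\Psi^\e W$ produces the conormal derivative of $U_\Phi^D$ on $S$ paired with $(\Psi^\e W)\big|_S$, and smallness of that pairing would require smallness of tangential derivatives of $\Psi^\e$ on $S$, which \eqref{12.46} does not provide (and which is false in the intended application, where $\Psi^\e$ is a boundary layer with $\nabla\Psi^\e=O(1)$). Your proposed repair --- $W\in W_2^{3/2}(\vw)$, Sobolev embeddings, H\"older with the exponent $2n/(2n-3)$, and interpolation of $\nabla\Psi^\e$ between the $L_2$-bound $C\mu^{\frac{1}{2}}(\e)$ and uniform $L_p$-bounds --- cannot reach \eqref{12.47}: the only smallness of $\nabla\Psi^\e$ you have is in $L_2$, so interpolation up to $L_{2n/3}$ (or to any exponent strictly above $2$, which is forced even for $n=3$) degrades the rate to a dimension-dependent power strictly smaller than $\mu^{\frac{1}{2}}(\e)$, of order $\mu^{\frac{3}{2n}}(\e)$ for $n>3$. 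You yourself defer precisely this step as ``the delicate point'' carrying ``the bulk of the technical work'', so the quantitative heart of the lemma is not established.

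The idea your plan is missing is the one the paper's proof is built on: it does not fix $\Phi$ and estimate the quotient in \eqref{2.22} directly, but characterizes $\|\a^\e-\a^0\|_S$ spectrally, as the top of the spectrum of $\cA_\a^2$ with $\cA_\a$ the self-adjoint operator $u\mapsto U^N_{\a u|_S}$, takes an almost-maximizing sequence $u_n$, and sets $v_n=\cA_\a u_n$, $w_n=\cA_\a v_n=\|\a\|_S u_n+f_n$ with $f_n\to0$. In the integration by parts against $\Psi^\e$ both dangerous gradient couplings are then removed through the weak identities \eqref{12.49} for $v_n$ and for $w_n$ (the latter standing in for $u_n$ up to $f_n$), so that the only surviving volume coupling is $2(\Psi^\e\nabla u_n,\nabla v_n)_{L_2(\vw)}$, which is $O(\mu(\e))$ simply because $\Psi^\e$ itself is small in $L_\infty$; no smallness of $\nabla\Psi^\e$ and no fractional elliptic regularity are used anywhere. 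Passing to the limit $n\to+\infty$ gives $\|\a^\e-\a^0\|_S\leqslant C\mu(\e)\|\a^\e-\a^0\|_{L_\infty(S)}$, which together with Lemma~\ref{lm12.9} yields \eqref{12.47} (in fact at the better rate $C\mu(\e)$). In your setting, where the test pair is $(U_\Phi^D,W)$, this symmetrization is unavailable on the Dirichlet side, and fractional regularity of $W$ is not a substitute for it at the rate $\mu^{\frac{1}{2}}(\e)$.
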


\begin{proof}
Положим $\a:=\a^\e-\a^0$.
 Через $\cA_\a$ обозначим линейный оператор в $W_2^1(\vw)$, отображающий каждую функцию $u\in W_2^1(\vw)$ в решение задачи (\ref{2.23}) с $\Phi=\a u\big|_S$, где $\a\in L_\infty(S)$ -- некоторая вещественная функция. На основе равенства (\ref{12.32}) и второй оценки в (\ref{12.33}) несложно убедиться, что оператор $\cA_\a$ ограничен, самосопряжён и
\begin{equation}\label{12.83}
(\a u,U_{\a u}^N)_{L_2(S)}=(\cA_\a u, \cA_\a u)_{W_2^1(\vw)}=(\cA_\a^2 u,u)_{W_2^1(\vw)}.
\end{equation}
Из этого равенства, принципа минимакса, определения (\ref{2.12}) нормы $\|\a\|_S$ и неравенства (\ref{12.34}) следует, что величина $\|\a\|_S$ -- это верхняя точка спектра самосопряжённого оператора $\cA_\a^2$. Эта точка может быть точкой существенного спектра либо дискретным собственным значением. В обоих случаях существует последовательность функций $u_n\in W_2^1(\vw)$, $n\in\mathds{N}$, такая что
\begin{equation}\label{12.48}
\|u_n\|_{W_2^1(\vw)}=1,\qquad \|f_n\|_{W_2^1(\vw)}\to0,\quad n\to+\infty,
\end{equation}
где обозначено $f_n:=\big(\cA_\a^2-\|\a\|_S\big)u_n$.
Положим:
 \begin{equation*}
 v_n:=\cA_\a u_n,\qquad w_n:=\|\a\|_S u_n+f_n=\cA_\a v_n.
\end{equation*}
Из определений оператора $\cA_\a$ и обобщённого решения задачи (\ref{2.23}) вытекает справедливость интегральных тождеств
\begin{equation}\label{12.49}
(v_n,\vp)_{W_2^1(\vw)}=(\a u_n,\vp)_{L_2(S)}, \qquad
(w_n,\vp)_{W_2^1(\vw)}=(\a v_n,\vp)_{L_2(S)}
\end{equation}
для всех $\vp\in W_2^1(\vw)$. Из первого тождества с $\vp=v_n$, первого равенства в (\ref{12.48}) и второй оценки в (\ref{12.33})  элементарно вытекает неравенство
\begin{equation}\label{12.52}
\|v_n\|_{W_2^1(\vw)}\leqslant \|\a\|_{S}\|u_n\|_{W_2^1(\vw)}=\|\a\|_{S}.
\end{equation}

Отметим ещё, что из соотношений (\ref{12.48}) и равенств (\ref{12.83}) легко следует, что
\begin{equation}\label{12.84}
\begin{aligned}
&(\cA_\a u_n, \cA_\a u_n)_{L_2(\vw)} - \|\a\|_S \|u_n\|_{W_2^1(\vw)}^2=(f_n,u_n)_{W_2^1(\vw)},
\\
&\|\a\|_S=(\a u_n,v_n)_{L_2(S)}-(f_n,u_n)_{W_2^1(\vw)}.
\end{aligned}
\end{equation}

Проинтегрируем теперь по частям следующим образом:
\begin{equation}\label{12.50}
\begin{aligned}
\int\limits_{\vw} u_n\overline{v}\D \Psi^\e\,dx=&-\int\limits_{S} \frac{\p\Psi^\e}{\p\tau}  u_n\overline{v_n}\,ds+ \int\limits_{\p\om\setminus S} \frac{\p \Psi^\e}{\p\nu} u_n\overline{v_n}\,ds
 \\
 &-\int\limits_{\vw} \nabla \Psi^\e\cdot\nabla (u_n\overline{v}_n)\,dx.
\end{aligned}
\end{equation}
Справедливость этой формулы для произвольных $u_n, v_n\in W_2^1(\vw)$ можно строго проверить, выписав её вначале для бесконечно дифференцируемых функций $u_n$, $v_n$, а потом воспользовавшись плотностью этих множеств в пространстве $W_2^1(\vw)$.

Равенство (\ref{12.50}) перепишем теперь следующим образом:
\begin{align*}
(u_n\D \Psi^\e,v_n)_{L_2(\vw)}=&- \left( \frac{\p\Psi^\e}{\p\tau} u_n, v_n\right)_{L_2(S)}   + \left(\frac{\p \Psi^\e}{\p\nu} u_n, v_n\right)_{L_2(\p\vw\setminus S)}
\\
&- (u_n\nabla\Psi^\e,\nabla v_n)_{L_2(\vw)} -(\nabla u_n,v_n \nabla \Psi^\e)_{L_2(\vw)}
\\
=&-(\a u_n, v_n)_{L_2(S)}-\left(\left(\frac{\p\Psi^\e}{\p\tau}-\a\right) u_n, v_n\right)_{L_2(S)}
 \\
 &+ \left(\frac{\p \Psi^\e}{\p\nu} u_n, v_n\right)_{L_2(\p\vw\setminus S)}
- \big(\nabla (u_n\Psi^\e),\nabla v_n\big)_{L_2(\vw)}
\\
&- \big(\nabla u_n, \nabla (v_n \Psi^\e)\big)_{L_2(\vw)}
+ 2(\Psi^\e\nabla u_n,\nabla v_n)_{L_2(\vw)}.
\end{align*}
Отметим ещё, что в силу определения функции $w_n$ выполнено
\begin{equation*}
(\nabla u_n, \nabla v_n \Psi^\e)_{L_2(\vw)}=\|\a\|_{S}^{-1}(\nabla w_n,\nabla v_n \Psi^\e)_{L_2(\vw)}-\|\a\|_{S}^{-1}(\nabla f_n,\nabla v_n \Psi^\e)_{L_2(\vw)}.
\end{equation*}
Перепишем теперь полученное равенство, используя определение функции $w_n$, интегральное тождество для $u_n$ из (\ref{12.49}) с $\vp=v_n\Psi^\e$ и аналогичное тождество для $\vp=u_n\Phi^\e$:
\begin{equation}\label{12.51}
\begin{aligned}
(\a u_n, v_n)_{L_2(S)}=&-(u_n\D \Psi^\e,v_n)_{L_2(\vw)}  + \left(\frac{\p \Psi^\e}{\p\nu} u_n, v_n\right)_{L_2(\p\vw\setminus S)}
\\
&+ 2(\Psi^\e\nabla u_n,\nabla v_n)_{L_2(\vw)}
-\left(\left(\frac{\p\Psi^\e}{\p\tau}-\a\right) u_n, v_n\right)_{L_2(S)}
\\
&- (\Psi^\e u_n,\a u_n)_{L_2(S)}+ \|\a\|_{S}^{-1}(\nabla f_n,\nabla v_n \Psi^\e)_{L_2(\vw)}
\\
&-\|\a\|_{S}^{-1}(\a v_n, \Psi^\e v_n)_{L_2(S)}.
\end{aligned}
\end{equation}
Это равенство, неравенства (\ref{12.52}), (\ref{12.33}), определение величины $\mu(\e)$ из условия леммы и очевидная оценка
\begin{equation*}
\|u\|_{L_2(S)}+\|u\|_{L_2(\p\om\setminus S)}\leqslant C\|u\|_{W_2^1(\vw)},\qquad u\in W_2^1(\vw),
\end{equation*}
с константой $C$, не зависящей от $u$, позволяют оценить левую часть в (\ref{12.51}) следующим образом:
\begin{equation*}
\big|(\a u_n, v_n)_{L_2(S)}\big|\leqslant C\mu(\e) \|\a\|_{L_\infty(S)} + C\|f_n\|_{W_2^1(\vw)}\|\Psi^\e\|_{W_\infty^1(\vw)},
\end{equation*}
где константа $C$ не зависит от $\e$, $n$, $u_n$, $v_n$, $f_n$, $\a$. Заменяя теперь скалярное произведение $(\a u_n, v_n)_{L_2(S)}$ на правую часть второго равенства в (\ref{12.84}) и переходя потом к пределу при $n\to+\infty$ с учётом сходимости в (\ref{12.48}), получаем:
\begin{equation*}
\|\a\|_S\leqslant C\mu(\e) \|\a\|_{L_\infty(S)}.
\end{equation*}
Применяя теперь
лемму~\ref{lm12.9},
приходим к (\ref{12.47}). Лемма доказана.
\end{proof}

Доказанная лемма даёт удобный способ проверки условия~\ref{A5}: достаточно отыскать функции $\a^0$ и $\Psi^\e$, удовлетворяющие условию (\ref{12.46}). Например, это легко сделать в случае строго периодической перфорации, когда
\begin{align}
&S=\{x:\, x_n=0\}, \qquad M_k^\e=\e(M_k+M), \nonumber
\\
&M_k:=(b_1 k_1,\ldots, b_{n-1} k_{n-1}), \qquad (k_1,\ldots,k_{n-1})\in\mathds{Z}^{n-1}=:\mathbb{M},\label{12.55}
\end{align}
где $b_i>0$ -- некоторые числа, $M$ -- некоторая точка в области $\Pi:=\square\times \mathds{R}$,
\begin{equation}\label{12.56} \square:=\Big\{x:-\frac{b_i}{2}<x_i<\frac{b_i}{2},\,i=1,\ldots,n-1\Big\}.
\end{equation}
Будем считать, что $\eta=1$, $\om_{k,\e}=\om$, где $\om\subset\mathds{R}^n$ -- некоторая фиксированная ограниченная область, такая что
$\overline{\om+M}\subset \Pi$. В этом случае функция $\a^\e$
имеет следующий вид:
\begin{equation*}
\a^\e(x)=
\frac{|\p\om|}{R_2^{n-1}}
\z\left(\frac{|x'-\e(M_k+M_\bot)|}{\e R_2}\right)
\end{equation*}
при $|x'-\e(M_k+M_\bot)|<\e R_2$, $k\in \mathds{Z}^{n-1}$, и $\a^\e=0$
в остальных точках поверхности $S$.
Здесь $M_\bot$ -- проекция точки $M$ на плоскость  $x_n=0$ и $x'=(x_1,\ldots,x_{n-1})$, а константа $R_2$ выбрана из условия $\overline{B_{R_2}(M)}\subset\Pi$. В качестве $\a^0$ возьмём постоянную функцию:
\begin{equation*}
\a^0(x'):= \frac{|\p\om|}{|\square|},\qquad x'\in S.
\end{equation*}
Тогда существует бесконечно дифференцируемое $\square$-периодическое решение краевой задачи
\begin{equation*}
\D_\xi \Psi=0\quad\text{при}\quad \xi_n>0, \qquad \frac{\p\Psi}{\p \xi_n}=\a^0-\a^\e(\e\xi')\quad\text{при}\quad \xi_n=0,
\end{equation*}
равномерно экспоненциально убывающее при $\xi_n\to+\infty$, где $\xi:=(\xi_1,\ldots,\xi_n)$. Теперь достаточно  положить $\Psi^\e(x):=\e\Psi(\frac{x}{\e})$ и сразу видим, что условие (\ref{12.46}) выполнено с $\mu(\e)=C\e$, где $C$ -- некоторая константа, не зависящая от $\e$.

Такой же подход удаётся перенести и на более общий случай локально периодических перфораций вдоль гладких поверхностей. А именно, пусть  поверхность $S$ имеет гладкость $C^5$ и удовлетворяет условию~\ref{A1}. На этой поверхности зададим разбиение единицы $1=\sum\limits_{p\in\mathds{N}} \z_p$ и пусть $\supp\z_p\Subset S_p$, $p\in\mathds{N}$, где $S_p$ -- некоторые открытые односвязные компактные части поверхности $S$ с гладкими краями. Будем считать, что каждая точка поверхности $S$ попадает в конечное число множеств $S_p$ и это число ограничено равномерно по всем точкам поверхности. Предположим ещё, что для каждой части $S_p$ поверхности $S$ существует  дифференцируемый диффеоморфизм $\cP_p$ класса гладкости $C^5$, отображающий некоторую фиксированную односвязную ограниченную область $\overline{D}\Subset \mathds{R}^{n-1}$, содержащую нуль, на кусок поверхности $\overline{S_p}$, причём якобианы обоих отображений $\cP_p$ и $\cP_p^{-1}$ ограничены равномерно сверху и снизу как по пространственным переменным, так и по параметру $p$.  Через $s$ обозначим декартовы переменные на множестве $D$ и их будем использовать в качестве локальных переменных на каждой из частей $S_p$.

Следующие условия описывают локально-периодическую структуру перфорации. А именно, предположим, что
\begin{equation}\label{12.57}
\big\{M_{k,\bot}^\e\,:\, M_{k,\bot}^\e\in S_p\big\}=\big\{\cP_p(\e M_k)\,:\,k\in\mathds{Z}^{n-1}\big\}\cap \mathds{R}^{n-1},
\end{equation}
где $M_k$ -- точки из (\ref{12.55}).
Относительно площадей границ полостей будем считать, что
\begin{equation}\label{12.58}
|\p\om_{k,\e}|=\bw_p(\e M_k,\e),
\end{equation}
где индекс $k$ выбирается из условия $M_{k,\bot}^\e\in S_p$, а $\bw_p=\bw_p(s,\e)$, $s\in D$ -- некоторая  функция,
такая, что
\begin{equation*}
\bw_p(\,\cdot\,0)\in C^5(\overline{D}),\qquad
 \sup\limits_{p\in\mathds{N}} \|\bw_p(\,\cdot\,,\e)-\bw_p(\,\cdot\,,0)\|_{L_\infty(S_p)}\to0,\quad \e\to+0.
\end{equation*}

Оказывается, что указанных условий на поверхность и структуру перфораций достаточно, чтобы гарантировать выполнение условия~\ref{A5}. При этом свойство локальной периодичности выражается условием (\ref{12.57}) о {ло\-каль\-но}-{пе\-рио\-ди\-чес\-ком} распределении точек $M_k^\e$ с точностью до диффеоморфизмов $\cP_p$ и существованием гладкой функции $\bw$, описывающей площади полостей в смысле равенства (\ref{12.58}). Выполнение условия (\ref{A5}) далее будет строго доказано в лемме~\ref{lm12.8}. Для строгой формулировки этой леммы нам понадобится ещё один вспомогательный объект, связанный с диффеоморфизмами $\cP_p$.

Так как диффеоморфизм $\cP_p$ гладкий, то в каждой точке $y\in D$ справедливо равенство
\begin{equation*}
\cP_p(y+s)=\cP_p(y)+\cP_p'(y)s + \tilde{\cP}_p(y,s),
\end{equation*}
где $\cP'(y)$ -- некоторый линейный оператор на пространстве $\mathds{R}^{n-1}$, зависящий от $y$ и имеющий гладкость класса $C^3$ по этой переменной, а $\cP_p(y,s)$ -- гладкое отображение класса $C^2$, удовлетворяющее равномерному по $p$, $s$, $y$ равенству
\begin{equation*}
\tilde{\cP}_p(y,s)=O(|s|^2),
\end{equation*}
которое допускает дифференцирование по $s$ и $y$.

\begin{lemma}\label{lm12.8}
При выполнении описанных выше условий на поверхность $S$ и условий локально-периодичности перфорации существует функция $\Psi^\e$, для которой выполнены условия леммы~\ref{lm12.4} с
\begin{equation}\label{12.62}
\begin{aligned}
&\a^0(x):=\frac{\eta^{n-1}\bw(\cP_p^{-1}x,0)}{|\square|} \int\limits_{\mathds{R}^{n-1}} \z\big(|\cP_p'(\cP_p^{-1}x)\xi|\big)\,d\xi,
\\
&\mu(\e):=C\left(\sup\limits_{p\in\mathds{N}} \|\bw_p(\,\cdot\,,\e)-\bw_p(\,\cdot\,,0)\|_{L_\infty(S_p)} +\e\eta^{n-1}\right),
\end{aligned}
\end{equation}
где константа $C$ не зависит от $\e$.
\end{lemma}

\begin{proof}
С учётом сделанных предположений, для каждой точки $M_{k,\bot}^\e$
при $|x-M_{k,\bot}^\e|<\e R_2$ имеем:
\begin{equation*}
|x-M_{k,\bot}^\e|=|\cP_p'(y)(y-\e M_k)|+O(\e^2),\qquad x=\cP_p y,
\end{equation*}
и потому для таких $x$ выполнено
\begin{align*}
&
\z\left(\frac{|x-M_{k,\bot}^\e|}{\e R_2}\right)= \z\left(\frac{|\cP_p'(y)(y-\e M_k)|}{\e R_2}\right)+O(\e),
\\
&\bw_p(\e M_k,\e)=\bw_p(y,0)+O\Big(\e+ \sup\limits_{p\in\mathds{N}} \|\bw_p(\,\cdot\,,\e)-\bw_p(\,\cdot\,,0)\|_{L_\infty(S_p)}
\Big).
\end{align*}
Поэтому, используя разбиение единицы функциями $\z_p$, функцию $\a^\e$ можно представить в виде
\begin{equation*}
\a^\e(x)=\eta^{n-1}\a_0^\e(x)+\e\tilde{\a}^\e(x),
\end{equation*}
где функция $\tilde{\a}^\e$ удовлетворяет оценке
\begin{equation*}
\|\tilde{\a}^\e\|_{L_\infty(S)}\leqslant C\eta^{n-1},
\end{equation*}
$C$ -- константа, не зависящая от $\e$, а функция $\a_0^\e$ имеет вид
\begin{equation*}
\a_0^\e(x)=\sum\limits_{p\in\mathds{N}} \a_p^\e(y),
\end{equation*}
где
\begin{equation*}
\a_p^\e(y):= \chi_p(\cP_p(y))
\frac{\bw_p(y,0)}{R_2^{n-1}}
\z\left(\frac{|\cP_p'(y)(y-\e M_k)|}{\e R_2}\right)
\end{equation*}
при $|\cP_p'(y)(y-\e M_k)|<\e R_2$, и $\a_p^\e(y):=0$
 в остальных точках $D$.
Определим ещё области
\begin{equation*}
\vw_p:=\Big\{x\in\vw\,:\, x=\cP_p y+\tau\nu(\cP_p(y)),\, y\in D, \, \tau\in(0,\tfrac{\tau_0}{2})\Big\}.
\end{equation*}
Ясно, что $\vw=\bigcup\limits_{p\in\mathds{N}}\vw_p$. Рассмотрим теперь краевые задачи
\begin{equation}\label{12.66}
\begin{gathered}
\D_x \Psi_p^\e=0\quad\text{в}\quad \vw_p,
\\
\frac{\p\Psi_p^\e}{\p\tau}=\z_p \a^0-\a_0^\e\quad\text{на}\quad S_p,
\qquad \frac{\p \Psi_p^\e}{\p\nu}=0\quad\text{на}\quad \p\vw\cap\p\vw_p.
\end{gathered}
\end{equation}
Основная идея доказательства состоит в построении формального асимптотического решения такого семейства задач с последующей их склейкой с помощью разбиения единицы:
\begin{equation}\label{12.67}
\Psi^\e=\eta^{n-1}\sum\limits_{p\in\mathds{N}} \z_p\Psi_p^\e.
\end{equation}

Формальное асимптотическое решение задачи (\ref{12.66}) будем строить методом двух масштабов в следующем виде:
\begin{equation}\label{12.68}
\Psi_p^\e(x)=\e\Psi_p^{(0)}(y,\xi) + \e^2 \Psi_p^{(1)}(y,\xi), \qquad
\xi=(\xi',\xi_n):=(y\e^{-1},\tau\e^{-1}).
\end{equation}
Функции $\Psi_p^{(0)}$ будем искать $\square$-периодическими, где множество $\square$ было определено в (\ref{12.56}).

Оператор Лапласа в переменных $(y,\tau)$ переписывается следующим образом:
\begin{equation*}
\D_x=\frac{\p^2\ }{\p\tau^2} + \ell_n(y,\tau)\frac{\p\ }{\p\tau} + \sum\limits_{i,j=1}^{n-1} \ell_{ij}(y,\tau) \frac{\p^2\ }{\p y_i\p y_j} + \sum\limits_{i=1}^{n-1}\ell_i(y,\tau)\frac{\p\ }{\p y_j},
\end{equation*}
где $\ell_n$, $\ell_i$, $\ell_{ij}$ -- некоторые функции, причём $\ell_n, \ell_i\in C^2(\overline{D}\times[0,\frac{\tau_0}{2}])$, $\ell_{ij}\in C^3(\overline{D}\times[0,\frac{\tau_0}{2}])$, и для функций $\ell_{ij}$ выполнено условие равномерной эллиптичности:
\begin{equation*}
\sum\limits_{i,j=1}^{n-1} \ell_{ij}(y,\tau) z_i z_j\geqslant c_3 \sum\limits_{i=1}^{n-1} z_i^2,\qquad (z_1,\ldots,z_{n-1})\in\mathds{R}^{n-1}
\end{equation*}
с константой $c_3>0$, не зависящей от $y$, $\tau$, $z_1$, \ldots, $z_{n-1}$. Тогда с учётом определения переменных $\xi$,
оператор $\D_x$ на функциях $\Psi=\Psi(y,\xi)$  переписывается следующим образом:
\begin{equation}\label{12.71}
\begin{aligned}
&\D_x \Psi(y,\xi)=\e^{-2} \cL_{-2} + \e^{-1} \cL_{-1} + \cL_\e,
\\
&\cL_{-2}:=\frac{\p^2\ }{\p\xi_n^2} + \sum\limits_{i,j=1}^{n-1}
\ell_{ij}(y,0) \frac{\p^2\ \ }{\p\xi_i \p \xi_j},
\\
&\cL_{-1}:=\ell_n(y,0) \frac{\p\ }{\p\xi_n} + 2\sum\limits_{i,j=1}^{n-1}\ell_{ij}(y,0) \left(\frac{\p^2\ \ }{\p\xi_i \p y_j}+\frac{\p^2\ \ }{\p y_i \p\xi_j}\right)
\\
&\hphantom{\cL_{-1}:=}+
\sum\limits_{i,j=1}^{n-1}\frac{\p\ell_{ij}}{\p\tau}(y,0)\xi_n \frac{\p^2\ \ }{\p\xi_i \p \xi_j} +\sum\limits_{i=1}^{n-1} \ell_i(y,0) \frac{\p\ }{\p\xi_i},
\end{aligned}
\end{equation}
где $\cL_\e$ -- некоторый дифференциальный оператор по переменным $(y,\xi)$ с коэффициентами, ограниченными величиной $C|\xi_n|^2$ равномерно по $y$, $\xi$, $\e$. Теперь подставим полученное выражение для оператора Лапласа и (\ref{12.68}) в краевую задачу (\ref{12.66}) и соберём члены при двух старших степенях $\e$. Тогда для $\Psi_0$ и $\Psi_1$ получаем следующие краевые задачи:
\begin{align}\label{12.72}
&\cL_{-2}\Psi_p^{(0)}=0\quad\text{при}\quad \xi_n>0,\qquad
\frac{\p\Psi_p^{(0)}}{\p\xi_n}=\beta_p(y,\xi)\qquad\text{при}\quad\xi_n=0,
\\
&\cL_{-2}\Psi_p^{(1)}=-\cL_{-1}\Psi_p^{(0)}\quad\text{при}\quad \xi_n>0,\qquad
\frac{\p\Psi_p^{(1)}}{\p\xi_n}=0\qquad\text{при}\quad\xi_n=0, \label{12.73}
\end{align}
где обозначено
\begin{gather*}
\beta_p(y,\xi')=\chi_p(\cP_p y) \big(\beta_p^0(y,\xi')-\a^0(\cP_p y )\big),
\\
\beta_p^0(y,\xi'):=
\left\{
\begin{aligned}
\frac{\bw_p(y,0)}{R_2^{n-1}}
\z&\left(\frac{|\cP_p'(y)\xi'|}{R_2}\right)
&&\text{при}\quad |\cP_p'(y)\xi'|< R_2,
\\
& 0 \quad &&\text{в остальных точках}\ \xi'.
\end{aligned}
\right.
\end{gather*}
При выводе задач (\ref{12.72}), (\ref{12.73}) краевое условие в (\ref{12.66}) при $\tau=\frac{\tau_0}{2}$ заменяется на условие экспоненциального убывания функций $\Psi_p^{(0)}$, $\Psi_p^{(1)}$ при $\xi_n\to+\infty$. Отметим ещё, что в силу определения функции $\a^0$ выполнено равенство
\begin{equation*}
\int\limits_{\square} \beta_p(y,\xi)\,d\xi'=0,\qquad y\in\overline{D},\qquad \xi':=(\xi_1,\ldots,\xi_{n-1}).
\end{equation*}
Это условие обеспечивает разрешимость задачи (\ref{12.72}) в классе $\square$-{пе\-рио\-ди\-чес\-ких} по $\xi$ функций, экспоненциально убывающих при $\xi_n\to+\infty$ и удовлетворяющих условию
\begin{equation}\label{12.76}
\int\limits_{\square} \Psi_p^{(0)}(y,\xi)\,d\xi'=0,\qquad \xi_n>0,\qquad y\in\overline{D}.
\end{equation}
Решение этой задачи можно построить явно методом разделения переменных:
\begin{gather}\label{12.77}
\Psi_p^{(0)}(y,\xi)=\sum\limits_{\bn\in (b_1\mathds{Z}\times \cdots \times b_{n-1} \mathds{Z})\setminus\{0\}} \g_\bn^{(0)}(y) e^{-2\pi (\L_\bn(y)\xi_n -\iu\bn\cdot\xi')},
\\
\L_\bn(y):=\left(\sum\limits_{i,j=1}^{n-1} \ell_{ij}(y,0)\bn_i\bn_j\right)^{\frac{1}{2}},  \qquad  \g_\bn^{(0)}(y):=\frac{1}{|\square|}\int\limits_{\square}
\b_p(y,\xi')e^{-2\pi\iu \bn\cdot\xi'}\,d\xi',\nonumber
\end{gather}
где $\bn=(\bn_1,\ldots,\bn_{n-1})$. Коэффициенты $\g_p^{(0)}(y)$ перепишем в терминах преобразования Фурье функции $\z(|\,\cdot\,|)$ следующим образом:
\begin{align*}
\g_\bn^{(0)}(y):=&\frac{\chi_p(\cP_p(y))\bw_p(y,0)}{|\square|R_2^{n-1}}
\int\limits_{\square}
\z\left(\frac{|\cP_p'(y)\xi'|}{R_2}\right)e^{-2\pi\iu \bn\cdot\xi'}\,d\xi'
\\
=&\frac{\chi_p(\cP_p(y))\bw_p(y,0)}{|\square|\det \cP_p'(y)}
\int\limits_{\mathds{R}^{n-1}}
\z(|\xi'|)e^{-2\pi\iu R_2 \bn\cdot(\cP_p'(y))^{-1}\xi'}\,d\xi'
\\
=&\frac{\chi_p(\cP_p(y))\bw_p(y,0)}{|\square|\det \cP_p'(y)}\hat{\z}\big(2\pi R_2 \bn\cdot(\cP_p'(y))^{-1}\big),
\\
\hat{\z}(t):=&\int\limits_{\mathds{R}^{n-1}} \z(|\xi'|)e^{-\iu\xi'\cdot t}\,d\xi'.
\end{align*}
Так как функция $\z(|\,\cdot\,|)$ бесконечно дифференцируемая и финитная, её преобразование Фурье убывает на бесконечности быстрее любой обратной степени модуля аргумента, а потому, с учётом предположений относительно диффеоморфизмов $\cP_p$, коэффициенты $\g_\bn^{(0)}(y)$ принадлежат $C^3(\overline{D})$ и  убывают быстрее любой обратной степени индекса $|\bn|$ при его стремлении к бесконечности равномерно по $y$ и тоже самое верно для всех имеющихся производных этих коэффициентов по $y$. Этот факт обеспечивает сходимость ряда в (\ref{12.77}) в $C^2(\overline{\square\times\mathds{R}})$-норме. Ясно, что построенная таким образом функция $\Psi_p^{(0)}$  бесконечно дифференцируемая по  $\xi$ при $\xi_n\geqslant 0$  и принадлежит  $C^3(\overline{D})$ как функция переменной $y$ для каждого $\xi$. Кроме того, эта функция $\square$-периодична по $\xi$
 и вместе со всеми своими производными по  $y$ вплоть до третьего порядка принадлежит классу $C(\{\xi:\, \xi_n\geqslant 0\}\times\overline{D})$. Функция $\Psi_p^{(0)}(y,\xi)$ и все её производные по  $\xi$ и $y$ экспоненциально убывают при $\xi_n\to+\infty$ равномерно по  $\xi$ и $y$.

Свойство $\square$-периодичности функции $\Psi_p^{(0)}$ и условие (\ref{12.76}) обеспечивают эти же свойство и условие для правой части уравнения в (\ref{12.73}). А именно, уравнение в (\ref{12.73}) можно переписать в виде:
\begin{equation*}
\cL_{-2}\Psi_p^{(1)}=\sum\limits_{\bn\in (b_1\mathds{Z}\times \cdots \times b_{n-1} \mathds{Z})\setminus\{0\}} \rf_\bn(y,\xi_n)
e^{-2\pi (\L_\bn(y)\xi_n -\iu\bn\cdot\xi')}
\quad\text{при}\quad \xi_n>0,
\end{equation*}
где $\rf_\bn$ -- некоторые полиномы по $\xi_n$ степени не выше двух с коэффициентами, зависящими от $y$ и принадлежащими классу $C^2(\overline{D})$. Коэффициенты полиномов $\rf_\bn(y,\xi_n)$  убывают быстрее любой обратной степени индекса $|\bn|$ при его стремлении к бесконечности равномерно по $y$  и тоже самое верно для всех имеющихся производных этих коэффициентов по $y$. Ещё отметим, что эти коэффициенты обращаются в нуль в точках $y$ вне носителя функции $\z_p(\cP_p y)$.

Такой вид правой части обеспечивает разрешимость задачи (\ref{12.73}) в нужном классе функций и позволяет найти решение в явном виде:
\begin{equation*}
\Psi_p^{(1)}(y,\xi)=\sum\limits_{\bn\in (b_1\mathds{Z}\times \cdots \times b_{n-1} \mathds{Z})\setminus\{0\}} \g_\bn^{(1)}(y,\xi_n) e^{-2\pi (\L_\bn(y)\xi_n -\iu\bn\cdot\xi')},
\end{equation*}
где $\g_\bn^{(1)}=\g_\bn^{(1)}(y,\xi_n)$ -- некоторые полиномы по $\xi_n$ степени не выше третьей с коэффциентами, зависящими  от $y$ и принадлежащими классу $C^2(\overline{D})$, причём $\g_\bn^{(1)}(y,0)=0$ для всех $\bn$. Коэффициенты полиномов $\g_\bn^{(1)}(y,\xi_n)$  убывают быстрее любой обратной степени индекса $|\bn|$ при его стремлении к бесконечности равномерно по $y$  и тоже самое верно для всех имеющихся производных этих коэффициентов по $y$. Эти коэффициенты обращаются в нуль в точках $y$ вне носителя функции $\z_p(\cP_p y)$.

Ясно, что построенная  функция $\Psi_p^{(1)}$  бесконечно дифференцируемая по  $\xi$ при $\xi_n\geqslant 0$  и принадлежит  $C^2(\overline{D})$ как функция переменной $y$ для каждого $\xi$. Эта функция $\square$-периодична по $\xi$ и вместе со всеми своими производными по  $y$ вплоть до второго порядка принадлежит классу $C(\{\xi:\, \xi_n\geqslant 0\}\times\overline{D})$. Функция $\Psi_p^{(1)}(y,\xi)$ и все её производные по  $\xi$ и $y$ экспоненциально убывают при $\xi_n\to+\infty$ равномерно по  $\xi$ и $y$.

Проверим теперь, что функция, определённая формулой (\ref{12.68}), удовлетворяет предположениям леммы~\ref{lm12.4}. В силу краевых задач (\ref{12.72}), (\ref{12.73}) и соотношений (\ref{12.71}) сразу видим, что эта функция удовлетворяет краевому условию на $S$ из задачи (\ref{12.66}) и равенству
\begin{equation*}
\D_x \Psi_p^\e =\e \big(\cL_{-1}\Psi_1^{(p)}+\cL_\e(\Psi_0^{(p)}+\e \Psi_1^{(p)})\big) \quad \text{в} \quad \vw_p.
\end{equation*}
Поэтому
\begin{equation*}
\|\D_x \Psi_p^\e\|_{L_\infty(\vw_p)}\leqslant C\e.
\end{equation*}
где константа $C$ не зависит от $\e$. Из равномерного экспоненциального убывания функций $\Psi_p^{(0)}$ и $\Psi_p^{(1)}$ элементарно выводим, что
\begin{equation*}
\left\|\frac{\p\Psi_p^\e}{\p\nu}\right\|_{L_\infty(\p\vw\cap\p\vw_p)}
\leqslant Ce^{-\frac{c}{\e^2}},
\end{equation*}
где $c$, $C$ -- некоторые положительные константы, не зависящие от $\e$ и $p$. Отметим ещё, что функции $\Psi_p^\e$ тождественно обращаются в нуль вне носителя функции $\z_p(\cP_p(y))$.  Используя теперь установленные факты  о функциях $\Psi_p^\e$, легко видим, что функция
$\Psi^\e$, определённая формулами (\ref{12.67}), (\ref{12.68}), удовлетворяет условиям леммы~\ref{lm12.4} с $\a^0$ и $\mu$ из (\ref{12.62}). Лемма доказана.
\end{proof}

\section{Вспомогательные леммы}\label{s3}
В данном параграфе мы докажем ряд вспомогательных лемм, которые далее будут использоваться в доказательстве наших основных теорем.

\begin{lemma}\label{lm3.1}
При выполнении условий~\ref{A1},~\ref{A2},~\ref{A3} для любой функции $u\in W^1_2(\Om^\e)$ верна оценка
\begin{equation*}
\|u\|_{L_2(\p\tht^\e)}^2\leqslant C(\e\eta+\d\eta^{n-1})\|\nabla u\|_{L_2(\Om^\e)}^2 +C(\d)\eta^{n-1}\|u\|^2_{L_2(\Om^\e)},
\end{equation*}
где $\d>0$ -- произвольная константа, а константы $C$ и $C(\d)$ не зависят от параметров $\e$, $\eta$, функции $u$, а также от формы и расположения полостей $\om_k^\e$, $k\in\mathbb{M}^\e$.
\end{lemma}

Доказательство этой леммы почти дословно совпадает с доказательством леммы 3.4 из \cite{25}. В доказательстве леммы 3.4 из \cite{25} функция $u$ была продолжена нулем внутрь полостей c первым граничным условием, и далее при доказательстве леммы наличие полостей с первым граничным условием не использовалось. В нашем случае таких полостей нет, поэтому никаких продолжений делать не требуется.

\begin{lemma}\label{lm3.2}
Пусть выполнены условия~\ref{A1},~\ref{A2},~\ref{A3},~\ref{A4}. Тогда существует $\lambda_0$, не зависящее от $\e$,  такое что при $\lambda<\lambda_0$ для всех $f\in L_2(\Om)$ задачи (\ref{2.5}), (\ref{2.12}) и (\ref{2.13}), (\ref{2.15}) имеют единственное решения $u_0\in W_2^1(\Omega)$ и $u_\e\in W_2^1(\Omega^\varepsilon)$ для всех достаточно малых $\e$.

При $\l<\l_0$ для всех $u\in W_2^1(\Om^\e)$ верна априорная оценка
\begin{equation}\label{4.9}
\big|\mathfrak{h}_0(u,u)-\l\|u\|_{L_2(\Om^\e)}^2\big|\geqslant C\|u\|_{W_2^1(\Om^\e)}^2,
\end{equation}
где константа $C$  не зависит от $u$ и $\e$.

Для решения задачи (\ref{2.5}) верна  равномерная оценка
\begin{equation}\label{3.23}
\|u_\e\|_{W_2^1(\Om^\e)}\leqslant C\|f\|_{L_2(\Om^\e)}.
\end{equation}
где константа $C$ не зависит от $f$ и $\e$.

Решение задачи (\ref{2.12}) является элементом пространства $W_2^2(\Om)$ и верна равномерная оценка
\begin{equation}\label{4.7}
\|u_0\|_{W_2^2(\Om)}\leqslant C\|f\|_{L_2(\Om)},
\end{equation}
где константа $C$ не зависит от $f$.

Решение задачи (\ref{2.13}), (\ref{2.15}) является элементом пространства $W_2^2(\Om\setminus S)$ и верна равномерная оценка
\begin{equation}\label{4.7a}
\|u_0\|_{W_2^1(\Om)}+\|u_0\|_{W_2^2(\Om\setminus S)}\leqslant C\|f\|_{L_2(\Om)},
\end{equation}
где константа $C$ не зависит от $f$.
\end{lemma}

\begin{proof}
Доказательство этой леммы в целом проводится по схеме доказательства леммы 5.1 из \cite{25} и отличается от последнего лишь в некоторых деталях. Поэтому кратко опишем схему доказательства и остановимся на имеющихся отличиях.

Мы обсудим только задачу (\ref{2.5}), так как  для задачи (\ref{2.13}), (\ref{2.15}) доказательство проводится совершенно аналогично, а задача
(\ref{2.12}) является частным случаем задачи (\ref{2.13}), (\ref{2.15}), соответствующему равенству $\a^0=0$.

Вначале в пространстве $\Ho^1(\Om^\e, \partial\Om)$ необходимо ввести оператор, действующий по правилу:
каждой функции $u\in\Ho^1(\Om^\e,\partial\Om)$ ставится в соответствие линейный непрерывный
функционал, заданный на $W_2^{1}(\Om^\e)$ и действующий по правилу $v\mapsto\mathfrak{h}_a(u,v)$, $v\in\Ho^1(\Om^\e,\p\Om)$.
Далее для доказательства однозначной разрешимости задачи (\ref{2.5}) достаточно проверить выполнение следующих свойств \cite[Гл. V\!I, \S 18.4]{Vain}, \cite[Гл. 1, \S 1.2$^0$]{Dub}:
\begin{enumerate}
\item\label{1} Для любых $u,v,w\in\Ho^1(\Om^\e,\p\Om)$ функция $t\mapsto\mathfrak{h}_a(u+t  v,w)$ непрерывна;
\item\label{2} Для любых $u,v\in\Ho^1(\Om^\e,\p\Om)$ выполнено $\RE\big(\mathfrak{h}_a(u,u-v)-\mathfrak{h}_a(v,u-v)\big)>0$;
\item\label{3} Справедливо соотношение
\begin{equation*}
\frac{\RE\mathfrak{h}_a(u,u)}{\|u\|_{W_2^1(\Om^\e)}}\to+\infty,\qquad \|u\|_{W_2^1(\Om^\e)}\to+\infty.
\end{equation*}
\end{enumerate}

Свойство \ref{1} проверяется аналогично проверке соответствующего свойства из доказательства леммы 5.1 в \cite{25}.

Проверим свойство \ref{2}. Сразу же отметим равенство
\begin{equation}\label{3.16}
\mathfrak{h}_a(u,u-v)-\mathfrak{h}_a(v,u-v))=\mathfrak{h}_0(u-v,u-v)+
\big(a(\,\cdot\,,u)-a(\,\cdot\,,v),u-v\big)_{L_2(\p\tht^\e)}
\end{equation}
и следующую тривиальную оценку:
\begin{equation}\label{3.11}
\bigg|\sum\limits_{j=1}^n\left( A_j\frac{\p
u}{\p x_j},u\right)_{L_2(\Om^\e)}+(A_0 u,u)_{L_2(\Om^\e)}
\bigg|\leqslant \frac{c_0}{4} \|\nabla u\|_{L_2(\Om^\e)}^2 + C_1 \|u\|_{L_2(\Om^\e)}^2,
\end{equation}
где $C_1$ -- некоторая константа, не зависящая от $u\in W_2^1(\Om^\e)$ и $\e$, а константа $c_0$ введена в условии эллиптичности (\ref{2.1}). Отсюда и из условия эллиптичности уже вытекает оценка (\ref{4.9}), если взять $\l<-C_1-1$.

Так как функция $a$ имеет ограниченные производные по $\RE u$ и  $\IM u$ (см. второе условие в (\ref{2.2})), то она удовлетворяет оценке:
\begin{equation}\label{3.2}
|a(x,u)-a(x,v)|\leqslant a_0 |u-v|,
\end{equation}
где $a_0$ -- некоторая константа, не зависящая от $x$  и $u$.
Поэтому в силу леммы~\ref{lm3.1} верно неравенство
\begin{equation*}
\big|(a(\,\cdot\,,u-v),u-v)_{L_2(\p\tht^\e)}\big|\leqslant \frac{c_0}{4} \|\nabla (u-v)\|_{L_2(\Om^\e)}^2 + C_2 \|u-v\|_{L_2(\Om^\e)}^2,
\end{equation*}
где константа $C_2$ не зависит от $\e$, $u,v\in W_2^1(\Om^\e)$. Учитывая эту оценку и (\ref{3.16}), (\ref{3.11}) и полагая $\l<\l_0:=-C_1-C_2-\frac{c_0}{4}$,
получаем:
\begin{equation}\label{3.20}
\begin{aligned}
\RE \big(\mathfrak{h}_a(u,u-v)-\mathfrak{h}_a(v,u-v))\big)\geqslant &
\frac{c_0}{2}\|\nabla (u-v)\|_{L_2(\Om^\e)}^2
\\
&-(\l+C_1+C_2)\|u-v\|_{L_2(\Om^\e)}^2
\\
\geqslant & \frac{c_0}{4} \|u-v\|_{W_2^1(\Om^\e)}^2.
\end{aligned}
\end{equation}
Из этой оценки уже следует свойство~\ref{2}. Полагая в этой оценке $v=0$ и учитывая равенство $\mathfrak{h}_a(0,u)=0$, сразу приходим к свойству~\ref{3}.

Аналогично выводу оценки (\ref{3.20}) несложно проверить, что
\begin{equation*}
\frac{c_0}{4} \|u_\e\|_{W_2^1(\Om^\e)}^2\leqslant |\mathfrak{h}_a(u^\e,u^\e)|=\big|(f,u_\e)_{L_2(\Om^\e)}\big|\leqslant
\|f\|_{L_2(\Om^\e)}\|u^\e\|_{L_2(\Om^\e)},
\end{equation*}
откуда вытекает априорная оценка (\ref{3.23}) с $C=\frac{4}{c_0}$.

Однозначная разрешимость задач (\ref{2.12}) и (\ref{2.13}), (\ref{2.15}) устанавливаются аналогично. Для решений этих задач верны априорные оценки, аналогичные (\ref{3.23}) с заменой пространств $W_2^1(\Om^\e)$ и $L_2(\Om^\e)$ на $W_2^1(\Om)$ и $L_2(\Om)$.

Уравнение в задаче (\ref{2.12}) можно переписать в виде
\begin{equation}\label{3.26}
 -\sum\limits_{i,j=1}^n\frac{\p}{\p x_i}A_{ij}\frac{\p u_0}{\p x_j}=f-\sum\limits_{j=1}^n
A_j\frac{\p}{\p x_j}
-(A_0+\lambda)u_0=f\quad\text{в}\quad\Om,
\end{equation}
где в силу априорных оценок для решения, аналогичных (\ref{3.23}),  правая часть -- элемент пространства $L_2(\Om)$, чья норма оценивается через $C\|f\|_{L_2(\Om)}$ с константой $C$, не зависящей от $f$.
Учитывая теперь краевое условие из задачи (\ref{2.12}), в силу стандартных теорем о повышении гладкости приходим к неравенству (\ref{4.7}).

Оценка (\ref{4.7a}) доказывается аналогично с единственным отличием, что здесь помимо приведения уравнения к виду (\ref{3.26}), необходимо ещё
правую часть в краевом условии на скачок производной считать следом на $S$ заданной функции из $W_2^1(\Om)$.
Лемма доказана.
\end{proof}

Обозначим: $B_r^k:=B_{rR_2\e\eta}(M_k^\e)$.

\begin{lemma}\label{lm3.3}
При выполнении условий \ref{A1},~\ref{A2},~\ref{A3} для любой функций $u\in W_2^1(\Om^\e)$  выполнено неравенство:
\begin{equation*}
\sum\limits_{k\in\mathbb{M}^\e}\|u\|_{L_2(B_{b_*}^k\setminus\om_k^\e)}^2 \leqslant C(\e^2\eta^2\|\nabla u\|_{L_2(\Om^\e)}^2+\e\eta^n\|u\|_{L_2(\Om^\e)}^2),
\end{equation*}
где С -- некоторая константа, не зависящая от параметров $k$, $\e$, $\eta$, функции $u$, формы и расположения полостей $\om_k^\e$, $k\in\mathbb{M}^\e$.
\end{lemma}

\begin{proof}
Всюду в доказательстве через $C$ обозначаем различные несущественные константы, не зависящие от $k$, $\e$, $\eta$, $u$, формы и расположения полостей $\om_k^\e$, $k\in\mathbb{M}^\e$. Напомним, что $\chi=\chi(t)$ -- бесконечно дифференцируемая срезающая функция, равная единице при $t<1$ и нулю при $t>2$. В окрестности каждой из точек $M_k^\e$ введем растянутые координаты по правилу: $y=(x-M_k^\e)\e^{-1}$.
Обозначим:
\begin{equation*}
\tilde{u}(y):=u(M_k^\e+\e\eta y)\chi\left(\frac{2|y|}{(b+1)R_2\eta}\right),
\end{equation*}
$\tilde{\om}_{k,\e}$ -- область, полученная сжатием $\omega_{k,\e}$ в $\eta^{-1}(\e)$ раз. Функция $\tilde{u}$ является элементом пространства $\Ho^1(B_{(b+1)R_2\eta}(0)\setminus\tilde{\om}_{k,\e},\p B_{(b+1)R_2\eta}(0))$.
В силу леммы 3.1 из \cite{25} выполнено неравенство:
\begin{align*}
\|u(M_k^\e+\e\eta\,\cdot\,)\|_{L_2(B_{b_*R_2\eta}(0) \setminus\tilde{\om}_{k,\e})}^2 &\leqslant \|\tilde{u}\|_{L_2(B_{(b+1)R_2\eta}(0)\setminus\tilde{\om}_{k,\e})}^2
\\
\leqslant &  C\eta^2\|\nabla_y\tilde{u}\|_{L_2(B_{(b+1)R_2\eta}(0) \setminus\tilde{\om}_{k,\e})}^2
\\
\leqslant& C \eta^2\Big(\|\nabla_y u(M_k^\e+\e\eta\,\cdot)\|_{L_2(B_{(b+1)R_2\eta}(0) \setminus\tilde{\om}_{k,\e})}^2
\\
&
+\eta^{-2}\|u(M_k^\e+\e\eta\,\cdot\,)\|_{L_2(B_{(b+1)R_2\eta}(0)\setminus B_{b_{*}R_2\eta}(0))}^2\Big).
\end{align*}
Переходя обратно к переменным $x$, получаем:
\begin{equation}\label{3.4}
\|u\|_{L_2(B_{b_*}^k\setminus\om_k^\e)}^2\leqslant C\left(\e^2\eta^2\|\nabla u\|_{L_2(B_{b+1}^k\setminus\om_k^\e)}^2+\|u\|_{L_2(B^k_{b+1}\setminus B^k_{b_*})}^2\right).
\end{equation}

Дословно повторяя вывод последней оценки в доказательстве леммы 3.3 в \cite{25}, легко показать, что
\begin{align*}
\|u\|^2_{L_2(B^k_{2b_*}\setminus B^k_{b_*})}\leqslant C\Big(&\varepsilon^2\eta^2\|\nabla u\|^2_{L_2(B_{(2b+1)R_2\varepsilon}(M_k^\varepsilon)\setminus
B_{b_{*}R_2\e}(M_k^\e))}\\
&+\eta^{n}\|u\|^2_{L_2(B_{(2b+1)R_2\varepsilon}(M_k^\varepsilon)\setminus B_{(b+2)R_2\varepsilon}(M_k^\varepsilon))}\Big).
\end{align*}
Подставим последнее неравенство в оценку (\ref{3.4}) и просуммируем результат по $k\in\mathbb{M}^\e$. В результате получим:
\begin{equation}\label{3.6}
\begin{aligned}
\sum\limits_{k\in\mathbb{M}^\e}\|u\|_{L_2(B_{b_*}^k \setminus \om_{k,\e})}^2\leqslant &C\sum\limits_{k\in\mathbb{M}^\e} \Big(\e^2\eta^2\|\nabla u\|_{L_2(B_{b+1}^k\setminus\om_k^\e)}^2 \\
&+\eta^n\|u\|_{L_2(B_{(2b+1)R_2\e}(M_k^\e)\setminus B_{(b+2)R_2\e}(M_k^\e))}^2\Big).
\end{aligned}
\end{equation}

Заметим, что для $|\tau|\leqslant\tau_0$ верно равенство:
\begin{equation*}
|u(\tau,s)|^2=\int\limits_{\tau_0}^\tau\frac{\p}{\p t}\left(|u(\tau,s)|^2\chi\left(\frac{3|t|}{\tau_0}\right)\right)\,dt, \qquad \pm\tau>0,
\end{equation*}
при условии отсутствия пересечения пути интегрирования и полостей $\tht^\e$. Из последнего равенства в силу неравенства Коши-Буняковского следует:
\begin{equation}\label{3.7}
|u(\tau,s)|^2\leqslant C\left(\int\limits_{\tau_0}^\tau\left|\frac{\p u}{\p t}(\tau,s)\right|^2dt+\int\limits_{\frac{\tau_0}{3}}^{\frac{2\tau_0}{3}} |u(\tau,s)|^2\,dt\right),\qquad \pm\tau>0.
\end{equation}
Интегрируя последнее неравенство по кольцевым областям $B_{(2b+1)R_2\e}(M_k^\e) \setminus B_{(b+2)R_2\e}(M_k^\e)$
и суммируя результат по $k\in\mathbb{M}^\e$, легко получим ещё одно неравенство
\begin{equation}\label{7.18}
\sum\limits_{k\in\mathbb{M}^\e}\|u\|_{L_2(B_{(2b+1)R_2\e}(M_k^\e) \setminus B_{(b+2)R_2\e}(M_k^\e))}^2\leqslant C\e\|u\|_{W_2^1(\Om^\e)}^2.
\end{equation}
Подставляя это неравенство в (\ref{3.6}), приходим к утверждению леммы. Лемма доказана.
\end{proof}

\begin{lemma}\label{lm12.3}
Для любой функции $u\in W_2^1(\Om)$ функция $a(x,u(x))$ является элементом пространства $W_2^1(\{x:\,\dist(x,S)<\tau_0\})$ и верны оценки
\begin{equation}\label{12.20}
\begin{aligned}
&\|a(\,\cdot\,,u(x))\|_{L_2(\{x:\,\dist(x,S)<\tau_0\})}\leqslant C\|u\|_{L_2(\Om)},\\
&\|\nabla_x a(\,\cdot\,,u(x))\|_{L_2(\{x:\,\dist(x,S)<\tau_0\})}\leqslant C\|u\|_{W_2^1(\Om)},
\end{aligned}
\end{equation}
где  $C$ -- некоторая константа, не зависящая от $u$.
\end{lemma}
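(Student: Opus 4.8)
The plan is to view $a(\,\cdot\,,u(\,\cdot\,))$ as the image of $u$ under a superposition (Nemytskii) operator: first establish the bounds (\ref{12.20}) for smooth $u$ by the classical chain rule, and then pass to the limit for general $u\in W_2^1(\Om)$. Write $\Om_{\tau_0}:=\{x:\,\dist(x,S)<\tau_0\}$. Since $\tau_0$ is small, $\overline{\Om_{\tau_0}}\subset\Om$, so the $L_2(\Om_{\tau_0})$-norms in (\ref{12.20}) are dominated by the $L_2(\Om)$- and $W_2^1(\Om)$-norms of $u$ as soon as we have the corresponding pointwise estimates for the integrand.

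First I would record the pointwise inequalities coming from (\ref{2.2}). From $a(x,0)=0$ and the Lipschitz bound (\ref{3.2}) we get $|a(x,u)|\leq a_0|u|$. For $u\in C^1(\overline{\Om})$ the composition $x\mapsto a(x,u(x))$ is classically differentiable in a neighbourhood of $S$, and
\begin{equation*}
\nabla_x\big(a(x,u(x))\big)=(\nabla_x a)(x,u(x))+\frac{\p a}{\p\RE u}(x,u(x))\,\nabla\RE u(x)+\frac{\p a}{\p\IM u}(x,u(x))\,\nabla\IM u(x);
\end{equation*}
using all three parts of (\ref{2.2}) together with $\max\{|\nabla\RE u|,|\nabla\IM u|\}\leq|\nabla u|$ this gives $|\nabla_x(a(x,u(x)))|\leq a_1|u(x)|+a_0|\nabla u(x)|$. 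Squaring and integrating over $\Om_{\tau_0}$ then yields (\ref{12.20}) for every $u\in C^1(\overline{\Om})$, with a constant depending only on $a_0$ and $a_1$.

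The only genuinely technical step is to drop the smoothness of $u$. Here I would take $u_m\in C^\infty(\overline{\Om})$ with $u_m\to u$ in $W_2^1(\Om)$, pass to a subsequence so that $u_m\to u$ and $\nabla u_m\to\nabla u$ almost everywhere in $\Om$ while $|u_m|^2$ and $|\nabla u_m|^2$ are uniformly integrable (being convergent in $L_1(\Om)$), and use the joint continuity of $a$, $\p a/\p\RE u$, $\p a/\p\IM u$, $\nabla_x a$ in $(x,u)$ together with the dominations $|(\nabla_x a)(x,u_m)|\leq a_1|u_m|$ and $|\p a/\p\RE u|+|\p a/\p\IM u|\leq a_0$ to conclude, via Vitali's theorem, that $a(\,\cdot\,,u_m)\to a(\,\cdot\,,u)$ in $L_2(\Om_{\tau_0})$ and that $\nabla_x(a(\,\cdot\,,u_m))$ converges in $L_2(\Om_{\tau_0})$ to the chain-rule expression above evaluated at the Sobolev function $u$. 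This simultaneously identifies the distributional gradient of $a(\,\cdot\,,u)$, shows $a(\,\cdot\,,u)\in W_2^1(\Om_{\tau_0})$, and, upon letting $m\to\infty$ in the estimate of the previous paragraph written for $u_m$, produces (\ref{12.20}).

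I expect the main obstacle to be exactly this limiting argument, i.e. the justification of the chain rule for the composition of the $C^1$-function $a$ with a $W_2^1$-function and the $L_2$-convergence of the gradients; apart from it everything reduces to the elementary pointwise estimates of the second paragraph. One could instead quote a standard theorem on superposition operators between Sobolev spaces, but the Vitali argument above keeps the proof self-contained.
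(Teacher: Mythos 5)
Your proof is correct and follows essentially the same route as the paper: the paper's argument consists precisely of the pointwise bounds $|a(x,u(x))|\leqslant a_0|u(x)|$ and $|\nabla_x\big(a(x,u(x))\big)|\leqslant a_0|\nabla u(x)|+a_1|u(x)|$ drawn from (\ref{2.2}), from which (\ref{12.20}) follows at once. The only difference is that you additionally spell out the density/Vitali justification of the chain rule for $W_2^1$-functions, which the paper treats as standard and omits.
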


\begin{proof}
Из условий (\ref{2.2})
следуют неравенства
\begin{equation}\label{12.21}
|a(x,u(x))|\leqslant a_0|u(x)|,\qquad
|\nabla_x a(x,u)|\leqslant a_0|\nabla_x u(x)| + a_1 |u(x)|,
\end{equation}
из которых немедленно вытекает утверждение леммы. 
\end{proof}

Обозначим: $\tilde{S}:=\{x\in\Om:\,\tau=(2bR_2+R_0)\e\}$. Поверхность $\tilde{S}$ естественным образом параметризуем точками поверхности $S$ по следующей формуле:
\begin{equation}\label{12.36}
\tilde{x}=x+\e(2bR_2+R_0)\nu(x),
\end{equation}
где $x\in S$, $\tilde{x}\in \tilde{S}$, а $\nu$, напомним, нормаль к поверхности $S$.

\begin{lemma}\label{lm3.4}
При выполнении условий \ref{A1}, \ref{A2}, \ref{A3}, \ref{A4} для любых функций $u,v\in W_2^1(\Om^\e)$ выполнено неравенство:
\begin{equation}\label{12.15}
\begin{aligned}
\sum\limits_{k\in\mathbb{M}^\e}\bigg|\frac{\eta^{n-1}|\p\om_{k,\e}|}{|\p B_{b_*R_2}(0)|}  (a(\,\cdot\,,u),v)_{L_2(\p B_{b_*R_2\e}(M_k^\e))}&
+ (a(\,\cdot\,,u),v)_{L_2(\p\om_k^\e)}\bigg|
\\
&
\leqslant C
\e^{\frac{1}{2}}
\|u\|_{W_2^1(\Om^\e)}\|v\|_{W_2^1(\Om^\e)},
\end{aligned}
\end{equation}
где константа $C$ не зависит от параметров $k$, $\e$, $\eta$, функций $u$ и $v$.

Если дополнительно $u,v\in W_2^2(\Om\setminus \tilde{S})$,
 то оценка (\ref{12.15}) может быть улучшена:
\begin{equation}\label{7.21}
\begin{aligned}
\sum\limits_{k\in\mathbb{M}^\e}\bigg|\frac{\eta^{n-1}|\p\om_{k,\e}|}{|\p B_{b_*R_2}(0)|}  (a(\,\cdot\,,u),v&)_{L_2(\p B_{b_*R_2\e}(M_k^\e))}
+ (a(\,\cdot\,,u),v)_{L_2(\p\om_k^\e)}\bigg|
\\
&
\leqslant C
\e
\|u\|_{W_2^2(\Om\setminus \tilde{S})}\|v\|_{W_2^2(\Om\setminus \tilde{S} )},
\end{aligned}
\end{equation}
где константа $C$ не зависит от параметров $k$, $\e$, $\eta$, функций $u$ и $v$.
\end{lemma}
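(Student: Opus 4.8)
The plan is to estimate each summand of \eqref{12.15} by applying the divergence theorem on two nested shells centred at $M_k^\e$, using the vector field of Assumption~\ref{A4} on the inner one. Fix $k\in\mathbb{M}^\e$, write $w:=a(\,\cdot\,,u)\overline v$ and $\mu_k:=\eta^{n-1}|\p\om_{k,\e}|/|\p B_{b_*R_2}(0)|$, so that $\mu_k=|\p\om_k^\e|/|\p B_{b_*R_2\e}(M_k^\e)|$. On the inner shell $\Sigma_k:=B_{b_*}^k\setminus\om_k^\e=B_{b_*R_2\e\eta}(M_k^\e)\setminus\om_k^\e$ I transplant the field of \eqref{2.6}, putting $\widetilde X_k(x):=X_k\bigl((x-M_k^\e)(\e\eta)^{-1}\bigr)$ and $\widetilde f_k(x):=(\e\eta)^{-1}f_k\bigl((x-M_k^\e)(\e\eta)^{-1}\bigr)$; then $\dvr\widetilde X_k=\widetilde f_k$, $\int_{\Sigma_k}\widetilde f_k\,dx=0$ by \eqref{2.7}, the normal trace of $\widetilde X_k$ on $\p\om_k^\e$ is the constant fixed in \eqref{2.6}, and on $\p B_{b_*}^k$ it is $\phi_k^\e(x):=\phi_k\bigl((x-M_k^\e)(\e\eta)^{-1}\bigr)$, whose mean over $\p B_{b_*}^k$ equals $m_k:=|\p\om_k^\e|/|\p B_{b_*}^k|$ (from the identity $\int_{\p B_{b_*R_2}(0)}\phi_k\,ds=|\p\om_{k,\e}|$ established in the proof of Lemma~\ref{lm12.9}). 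On the outer annulus $A_k:=B_{b_*R_2\e}(M_k^\e)\setminus B_{b_*}^k$ I use the divergence-free radial field $Y_k(x):=\frac{|\p\om_k^\e|}{|\mathbb{S}^{n-1}|}\,\frac{x-M_k^\e}{|x-M_k^\e|^{n}}$, for which $Y_k\cdot\nu=m_k$ on $\p B_{b_*}^k$ and $Y_k\cdot\nu=\mu_k$ on $\p B_{b_*R_2\e}(M_k^\e)$. Applying the Gauss--Green formula to $\dvr(w\widetilde X_k)$ on $\Sigma_k$ and to $\dvr(wY_k)$ on $A_k$ and eliminating the common term $m_k\int_{\p B_{b_*}^k}w\,ds$, I obtain an identity expressing the $k$-th term under the sum in \eqref{12.15} (with appropriate signs on the right, governed by the orientation conventions of \eqref{2.6}) as $\mathrm I_k+\mathrm{II}_k+\mathrm{III}_k$, where
\[
\mathrm I_k:=\int_{\Sigma_k}\bigl(\nabla w\cdot\widetilde X_k+w\,\widetilde f_k\bigr)\,dx,\qquad \mathrm{II}_k:=\int_{A_k}\nabla w\cdot Y_k\,dx,\qquad \mathrm{III}_k:=\int_{\p B_{b_*}^k}w\,(\phi_k^\e-m_k)\,ds.
\]

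The whole arrangement is designed so that $\mathrm I_k,\mathrm{II}_k,\mathrm{III}_k$ annihilate constants: replacing $w$ by a constant kills $\mathrm I_k$ because $\int_{\Sigma_k}\widetilde f_k=0$, kills $\mathrm{II}_k$ because $Y_k$ is divergence-free, and kills $\mathrm{III}_k$ because $\phi_k^\e-m_k$ has zero mean — so the leading-order contribution $w\approx a(\,\cdot\,,u)\overline v\big|_{M_k^\e}$ of each summand cancels. Accordingly I subtract from $w$ its mean over $\Sigma_k$ in $\mathrm I_k$ and its mean over $\p B_{b_*}^k$ in $\mathrm{III}_k$, write $\nabla w=\bigl(\nabla_x a(\,\cdot\,,u)\bigr)\overline v+a(\,\cdot\,,u)\nabla\overline v$, and use $|\nabla_x a(\,\cdot\,,u)|\le a_0|\nabla u|+a_1|u|$, $|a(\,\cdot\,,u)|\le a_0|u|$ from \eqref{2.2}. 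Since $\|\widetilde X_k\|_{L_\infty}=\|X_k\|_{L_\infty}$, $\|\phi_k\|_{L_\infty}$ and $|\p\om_{k,\e}|$ are bounded uniformly in $k,\e$ (Assumption~\ref{A4}, Lemma~\ref{lm12.9}), whence also $\|Y_k\|_{L_\infty(A_k)}\le C$, and since the Poincar\'e and trace constants on the rescaled shell $B_{b_*R_2}(0)\setminus\om_{k,\e}$ are uniform in $k,\e$ by Assumptions~\ref{A2},~\ref{A3} — the length scale on $\Sigma_k$ being $\e\eta$, which absorbs the factor $(\e\eta)^{-1}$ of $\widetilde f_k$ — one obtains, with $D_k:=B_{b_*R_2\e}(M_k^\e)\setminus\om_k^\e$,
\[
|\mathrm I_k|+|\mathrm{II}_k|+|\mathrm{III}_k|\le C\bigl(\|\nabla_x a(\,\cdot\,,u)\|_{L_2(D_k)}\|v\|_{L_2(D_k)}+\|a(\,\cdot\,,u)\|_{L_2(D_k)}\|\nabla v\|_{L_2(D_k)}+\|a(\,\cdot\,,u)\|_{L_2(D_k)}\|v\|_{L_2(D_k)}\bigr)
\]
with $C$ independent of $k,\e$.

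Now I sum over $k$ and use the Cauchy--Schwarz inequality together with the pairwise disjointness of the balls $B_{bR_2\e}(M_k^\e)\supset D_k$ (Assumption~\ref{A2}). The $u$-factors assemble into $C\|u\|_{W_2^1(\Om^\e)}$ (via $\|\nabla_x a(\,\cdot\,,u)\|_{L_2(\Om^\e)}\le C\|u\|_{W_2^1(\Om^\e)}$). The $v$-factors that are $L_2$-norms over $\bigcup_k\Sigma_k$ are controlled by Lemma~\ref{lm3.3}: $\sum_k\|v\|_{L_2(\Sigma_k)}^2\le C\bigl(\e^2\eta^2\|\nabla v\|_{L_2(\Om^\e)}^2+\e\eta^n\|v\|_{L_2(\Om^\e)}^2\bigr)\le C\e\|v\|_{W_2^1(\Om^\e)}^2$; those over $\bigcup_k A_k$, which lies in $\{x:\dist(x,S)\le(b_*R_2+R_0)\e\}$, by the thin-layer estimate $\|v\|_{L_2}^2\le C\e\|v\|_{W_2^1(\Om^\e)}^2$ already used in the proof of Lemma~\ref{lm3.3} (the passage from \eqref{3.7} to \eqref{7.18}); and likewise $\sum_k\|a(\,\cdot\,,u)\|_{L_2(\Sigma_k)}^2\le a_0^2\sum_k\|u\|_{L_2(\Sigma_k)}^2\le C\e\|u\|_{W_2^1(\Om^\e)}^2$. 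This yields $\sum_k(|\mathrm I_k|+|\mathrm{II}_k|+|\mathrm{III}_k|)\le C\e^{1/2}\|u\|_{W_2^1(\Om^\e)}\|v\|_{W_2^1(\Om^\e)}$, which is \eqref{12.15}.

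For the sharper bound \eqref{7.21} the same scheme applies under $u,v\in W_2^2(\Om\setminus\tilde S)$: since $b_*R_2+R_0<2bR_2+R_0$, every shell $D_k$ lies strictly on one side of $\tilde S=\{x\in\Om:\tau=(2bR_2+R_0)\e\}$ (cf.\ \eqref{12.36}), so $u$, $v$ and hence $w$ are controlled near $S$ in the stronger $W_2^2$, resp.\ $W_2^1$, scale by $\|u\|_{W_2^2(\Om\setminus\tilde S)}$, $\|v\|_{W_2^2(\Om\setminus\tilde S)}$; estimating the remainders through $\nabla w$ in $L_2$ rather than only in $L_1$, and Taylor-expanding $w$ to second order about $M_k^\e$ — the first-order term dropping out of $\mathrm{II}_k$ by the oddness of $Y_k$ on the ball $A_k$ and out of $\mathrm I_k,\mathrm{III}_k$ after the mean subtraction — improves each Sobolev/trace step by one extra power $\e^{1/2}$ and gives $C\e\|u\|_{W_2^2(\Om\setminus\tilde S)}\|v\|_{W_2^2(\Om\setminus\tilde S)}$. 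The main difficulties I anticipate are keeping the orientations and normalisations of the two Gauss--Green identities consistent so that the constant part of $w$ cancels exactly and the combination in \eqref{12.15}--\eqref{7.21} reappears with the right sign and coefficient $\mu_k$, and securing the Poincar\'e/trace constants on $B_{b_*R_2}(0)\setminus\om_{k,\e}$ uniformly in $k,\e$ (exactly the role of Assumptions~\ref{A2},~\ref{A3}), together with the bookkeeping by which the singular factor $(\e\eta)^{-1}$ of $\widetilde f_k$ in $\mathrm I_k$ is absorbed by the length scale $\e\eta$ and the mean subtraction before Lemma~\ref{lm3.3} is applied.
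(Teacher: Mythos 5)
Your argument for \eqref{12.15} is sound and is essentially the paper's: the same identity coming from the field $X_k$ of Assumption~\ref{A4} on $B_{b_*}^k\setminus\om_k^\e$ (with \eqref{2.7} and $\int_{\p B_{b_*R_2}(0)}\phi_k\,ds=|\p\om_{k,\e}|$ killing the constant part), the same passage from the sphere of radius $b_*R_2\e\eta$ to that of radius $b_*R_2\e$ (your divergence-free field $Y_k$ is just $\nabla|x-M_k^\e|^{2-n}$ up to a constant, i.e.\ the paper's identity \eqref{12.14}), and the same gain of $\e^{\frac12}$ through Lemma~\ref{lm3.3} and the thin-layer estimate; the only cosmetic difference is that you subtract the mean of the product $w=a(\,\cdot\,,u)\overline v$ and need an $L_1$-Poincar\'e/trace inequality with constants uniform in $k,\e$, where the paper splits $a(\,\cdot\,,u)$ and $v$ separately into means and fluctuations and uses the $L_2$-Poincar\'e inequality of Cheeger type.

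The second half, however, contains a genuine gap: your mechanism for upgrading $\e^{\frac12}$ to $\e$ in \eqref{7.21} does not work as stated. First, the claimed cancellation of the first-order Taylor term in $\mathrm I_k$ and $\mathrm{III}_k$ is false: the zero-mean properties of $\widetilde f_k$ (condition \eqref{2.7}) and of $\phi_k^\e-m_k$ annihilate constants only, not affine functions, so after mean subtraction the linear part of $w$ still contributes at the old order. Second, even where the linear part does cancel (in $\mathrm{II}_k$, by oddness of $Y_k$), converting this into an extra factor $\e$ requires controlling second differences of $w=a(\,\cdot\,,u)\overline v$, i.e.\ $\nabla^2_x\,a(x,u(x))$; but \eqref{2.2} only provides bounded first derivatives of $a$ in $u$ and $|\nabla_x a|\le a_1|u|$, so $w$ has no usable second derivatives even when $u,v\in W_2^2(\Om\setminus\tilde S)$, and the second-order Taylor expansion is unavailable. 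The improvement is in fact obtained much more simply, and your own intermediate bound already suffices for it: since the sets $B_{b_*R_2\e}(M_k^\e)\setminus\om_k^\e$ and $B_b^k\setminus B_{b_*}^k$ lie strictly between $S$ and $\tilde S$, the assumption $u,v\in W_2^2(\Om\setminus\tilde S)$ allows one to apply Lemma~\ref{lm3.3} and the thin-layer estimate to $\nabla u$ and $\nabla v$ as well (this is the paper's step \eqref{7.23}); then in every product such as $\|\nabla_x a(\,\cdot\,,u)\|_{L_2(D_k)}\|v\|_{L_2(D_k)}$ \emph{both} factors, summed over $k$, carry a factor $\e^{\frac12}$ (respectively $\e\eta+\e^{\frac12}\eta^{\frac n2}$ on the inner shells), and the estimates of type \eqref{7.20} immediately yield $C\e\|u\|_{W_2^2(\Om\setminus\tilde S)}\|v\|_{W_2^2(\Om\setminus\tilde S)}$, with no Taylor expansion or parity argument needed.
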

\begin{proof}
Всюду в доказательстве через $C$ обозначаем различные несущественные константы, не зависящие от $k$, $\e$, $\eta$, $u$ и $v$.  Обозначим:
\begin{gather*}
X_k^\e(x):=X_k\left(\frac{x-M_k^\e}{\e\eta}\right), \qquad \phi_k^\e(x):=\phi_k\left(\frac{x-M_k^\e}{\e\eta}\right),
\\
f_k^\e(x):=f_k\left(\frac{x-M_k^\e}{\e\eta}\right),
\end{gather*}
где, напомним,  функции $X_k$, $f_k$,  $\phi_k$. Верно равенство:
\begin{equation*}
\int\limits_{B_{b_*}^k\setminus\om_k^\e}a(x,u(x))\overline{v(x)}\dvr X_k^\e(x) \, dx=\e^{-1}\eta^{-1}\int\limits_{B_{b_*}^k\setminus\om_k^\e} a(x,u(x))\overline{v(x)} f_k^\e(x) \, dx.
\end{equation*}
Проинтегрируем по частям в левой части этого равенства и просуммируем по всем $k\in\mathbb{M}^\e$. В результате получим:
\begin{equation}\label{3.10}
\begin{aligned}
\sum\limits_{k\in\mathbb{M}^\e}& \Big((a(\,\cdot\,,u),v)_{L_2(\p\om_k^\e)} + (\phi_k^\e a(\,\cdot\,,u),v)_{L_2(\p B_{b_*}^k)}\Big)
\\
=&
\sum\limits_{k\in\mathbb{M}^\e} \int\limits_{B_{b_*}^k\setminus\om_k^\e}X_k^\e(x)\nabla a(x,u(x))\overline{v(x)}\,dx
\\
&+\e^{-1}\eta^{-1} \sum_{k\in\mathbb{M}^\e}\int\limits_{B_{b_*}^k\setminus\om_k^\e} a(x,u(x))\overline{v(x)} f_k^\e(x) \,dx.
\end{aligned}
\end{equation}
Возможность интегрирования по частям можно обосновать следующим образом. Вначале достаточно аппроксимировать функции $a(x,u(x))$ и $v(x)$ в норме $W_2^1(B_{b_*}^k\setminus\om_k^\e)$ бесконечно дифференцируемыми функциями и выписать приведённое равенство на основе определения (\ref{12.85}) обобщённого решения задачи (\ref{2.6}). Затем, учитывая принадлежность функций $X_k^\e$ и $f_k^\e$ пространствам $L_\infty(B_{b_*}^k\setminus\om_k^\e)$, можно уже перейти к пределу по аппроксимирующим последовательностям.

Используя леммы~\ref{lm12.3},~\ref{lm3.3}, оценим первое слагаемое в правой части равенства (\ref{3.10}):
\begin{equation}\label{3.13}
\begin{aligned}
\sum\limits_{k\in\mathbb{M}^\e}\Big|\big(X_k^\e\nabla a(\,\cdot\,,u),v_\e&\big)_{L_2(B_{b_*}^k\setminus\om_k^\e)} +\big(X_k^\e a(\,\cdot\,,u), \nabla v_\e\big)_{L_2(B_{b_*}^k\setminus\om_k^\e)}\Big|
\\
&\leqslant C\big(\e\eta +\e^{\frac{1}{2}}\eta^{\frac{n}{2}}\big) \|u\|_{W_2^1(\Om^\e)}\|v\|_{W_2^1(\Om^\e)}.
\end{aligned}
\end{equation}

Наша дальнейшая цель -- оценить второе слагаемое в правой части (\ref{3.10}). Пусть $\psi$ -- некоторая функция из пространства $W_2^1(B_{b_*}^k\setminus\om_k^\e)$. Обозначим:
\begin{equation*}
\langle \psi \rangle^k:=\frac{1}{|B_{b_*}^k\setminus\om_k^\e|} \int\limits_{B_{b_*}^k\setminus\om_k^\e}\psi\,dx,\qquad \psi^{\bot}:=\psi-\langle \psi \rangle^k.
\end{equation*}

Аналогично доказательству леммы~3.1 в \cite{25} на основе общих результатов работы \cite{Chee} устанавливается, что второе собственное значение Лапласиана с условием Неймана в области $B_{b_*R_2}(0)\setminus\om_{k,\e}$ ограничено снизу равномерно по $k$ и $\e$. Отметим ещё, что постоянная функция в этой области является собственной функцией такого оператора, соответствующего нулевому собственному значению, а также, что
\begin{equation*}
\int\limits_{B_{b_*}^k\setminus\om_k^\e} \psi^\bot\,dx=0.
\end{equation*}
Используя эти факты, аналогично рассуждениям из доказательства леммы~3.1 в \cite{25} доказывается неравенство:
\begin{equation}\label{3.14}
\|\psi^{\bot}\|_{L_2(B_{b_*}^k\setminus\om_k^\e)}\leqslant C\e\eta\|\nabla\psi\|_{L_2(B_{b_*}^k\setminus\om_k^\e)}.
\end{equation}
Оценим $\langle \psi \rangle^k$. Пользуясь неравенством Коши-Буняковского, получаем:
\begin{equation}\label{3.15}
|\langle {\psi}\rangle^k|\leqslant C\e^{-n}\eta^{-n}\int\limits_{B_{b_*}^k\setminus\om_k^\e} |\psi|\,dx\leqslant C\e^{-\frac{n}{2}}\eta^{-\frac{n}{2}} \|\psi\|_{L_2(B_{b_*}^k\setminus\om_k^\e)}.
\end{equation}
Верно равенство:
\begin{align*}
\sum\limits_{k\in\mathbb{M}^\e}(a(\,\cdot\,,&u)f_k^\e, v)_{L_2(B_{b_*}^k\setminus\om_k^\e)}=\sum\limits_{k\in\mathbb{M}^\e}
\bigg(\langle a(\,\cdot\,,u)\rangle^k\left\langle{v}\right\rangle^k \int\limits_{B_{b_*}^k\setminus\om_k^\e}f_k^\e \,dx
\\
&+\langle a(\,\cdot\,,u)\rangle^k(f_k^\e, v^{\bot})_{L_2(B_{b_*}^k\setminus\om_k^\e)}
+(f_k^\e a(\,\cdot\,,u)^{\bot},v)_{L_2(B_{b_*}^k\setminus\om_k^\e)}\bigg).
\end{align*}
Оценим правую часть этого равенства. Первое слагаемое в правой части этого равенства равно нулю в силу условия (\ref{2.7}).
Используя неравенства   (\ref{3.14}), (\ref{3.15}) и леммы~\ref{lm3.3},~\ref{lm12.3}, выводим оценку:
\begin{equation*}
\e^{-1}\eta^{-1}\sum\limits_{k\in\mathbb{M}^\e}\big|(a(\,\cdot\,,u)f_k^\e, v)_{L_2(B_{b_*}^k\setminus\om_k^\e)}\big|\leqslant C \big(\e\eta +\e^{\frac{1}{2}}\eta^{\frac{n}{2}}\big)
\|u\|_{W_2^1(\Om^\e)} \|v\|_{W_2^1(\Om^\e)}.
\end{equation*}
Из последней оценки, неравенства (\ref{3.13}) и равенства (\ref{3.10}) следует
\begin{align*}
\sum\limits_{k\in\mathbb{M}^\e}\big|(a(\,\cdot\,,u),v)_{L_2(\p\om_k^\e)} & + (\phi_k^\e a(\,\cdot\,,u),v)_{L_2(\p B_{b_*}^k)}\big|
\\
&\leqslant C\big(\e\eta+\e^{\frac{1}{2}}\eta^{\frac{n}{2}}\big) \|u\|_{W_2^1(\Om^\e)}\|v\|_{W_2^1(\Om^\e)}.
\end{align*}
Покажем теперь, что не ухудшая полученные оценки, функцию $\phi_k^\e$ в этих оценках можно заменить на её подходящее среднее.

Пусть $\varphi$ -- некоторая функция из пространства $W_2^1(B^k_b\setminus B^k_{b_*})$. Обозначим
\begin{align*}
\left\langle{\varphi}\right\rangle_k:=\frac{1}{|B^k_b\setminus B^k_{b_*}|}\int\limits_{B^k_b\setminus B^k_{b_*}}\varphi \,dx,\qquad \varphi_{\bot}:=\varphi-\left\langle{\varphi}\right\rangle_k.
\end{align*}
Ясно, что
\begin{equation*}
\int\limits_{B^k_b\setminus B^k_{b_*}}\varphi_{\bot} \,dx=0.
\end{equation*}
Оценим норму функции $\varphi_{\bot}$. Делая замену $\tilde{y}=(x-M_k^\e)\e^{-1}\eta^{-1}$ и применяя в растянутых переменных неравенство Пуанкаре, выводим неравенство:
\begin{align*}
&\|\varphi_{\bot}(M_k+\e\eta\,\cdot\,)\|_{L_2(\p B_{b_* R_2}(0))}\leqslant C\|\nabla_{\tilde{y}}\varphi(M_k+\e\eta\,\cdot\,) \|_{L_2(B_{bR_2}(0)\setminus B_{b_*R_2}(0))}.
\end{align*}
Переходя обратно к переменным $x$, получаем:
\begin{equation}\label{3.17}
\|\varphi_{\bot}\|_{L_2(\p B_{b_*}^k)}\leqslant C\e^{\frac{1}{2}}\eta^{\frac{1}{2}}\|\nabla\varphi\|_{L_2(B_b^k\setminus B_{b_*}^k)}.
\end{equation}
Верна оценка, аналогичная (\ref{3.15}):
\begin{align}\label{3.18}
| \langle \vp \rangle_k|\leqslant
C\e^{-\frac{n}{2}}\eta^{-\frac{n}{2}}\|\psi\|_{L_2(B^k_b\setminus B^k_{b_*})}.
\end{align}
Имеет место равенство:
\begin{equation}\label{3.19}
\begin{aligned}
\big(\phi_k^\e a(\,\cdot\,,u),v\big)_{L_2(\p B_{b_*}^k)}=& \langle a(\,\cdot\,,u)\rangle_k \langle v \rangle_k \int\limits_{\p B_{b_*}^k}\phi_k^\e\,ds
\\
&+\big(\phi_k^\e a(\,\cdot\,,u)_{\bot},v\big)_{L_2(\p B_{b_*}^k)}
\\
&+\langle a(\,\cdot\,,u)\rangle_k \big(\phi_k^\e,v_{\bot}\big)_{L_2(\p B_{b_*}^k)}.
\end{aligned}
\end{equation}

В силу выполнено
\begin{equation*}
\int\limits_{\p B_{b_*}^k}\phi_k^\e\,ds=(\e\eta)^{n-1}|\p\om_{k,\e}|.
\end{equation*}
Подставляя последнее равенство в (\ref{3.19}) и суммируя результат по всем $k\in\mathbb{M}^\e$, получим:
\begin{equation}\label{3.21}
\begin{aligned}
\sum\limits_{k\in\mathbb{M}^\e}&\Big(\big(\phi_k^\e a(\,\cdot\,,u),v\big)_{L_2(\p B_{b_*}^k)}-(\e\eta)^{n-1}|\p\om_{k,\e}| \langle  a(\,\cdot\,,u)  \rangle_k \langle v  \rangle_k\Big)
\\
&=\sum_{k\in\mathbb{M}^\e}\big(\phi_k^\e (a(\,\cdot\,,u))_{\bot},v\big)_{L_2(\p B_{b_*}^k)} +\sum_{k\in\mathbb{M}^\e}\langle a(\,\cdot\,,u)\rangle_k \big(\phi_k^\e,v_{\bot}\big)_{L_2(\p B_{b_*}^k)}.
\end{aligned}
\end{equation}
Оценим правую часть последнего равенства. В силу неравенства (\ref{3.17}) и лемм~\ref{lm3.1},~\ref{lm12.3} выполнено:
\begin{equation*}
\sum\limits_{k\in\mathbb{M}^\e}\big|\big(\phi_k^\e (a(\,\cdot\,,u))_{\bot},v\big)_{L_2(\p B_{b_*}^k)}\big|
\leqslant C(\e\eta+
\e^{\frac{1}{2}}\eta^{\frac{n}{2}})
\|u\|_{W_2^1(\Om^\e)}\|v\|_{W_2^1(\Om^\e)}.
\end{equation*}
Применяя неравенства (\ref{3.17}), (\ref{3.18}), (\ref{12.21})  и леммы~\ref{lm3.1},~\ref{lm3.3}, оценим второе слагаемое в правой части (\ref{3.21}):
\begin{equation*}
\sum_{k\in\mathbb{M}^\e} \big|\langle a(\,\cdot\,,u) \rangle_k(\phi_k^\e,v_{\bot})_{L_2(\p B_{b_*}^k)}\big|\leqslant C\big(\e \eta +\e^\frac{1}{2}\eta^{\frac{n}{2}}\big) \|u\|_{W_2^1(\Om^\e)}\|v\|_{W_2^1(\Om^\e)}.
\end{equation*}
В силу последних двух неравенств и равенства (\ref{3.21}) имеем:
\begin{align*}
\sum\limits_{k\in\mathbb{M}^\e}\Big| \big(\phi_k^\e a(\,\cdot\,,u),v\big)_{L_2(\p B_{b_*}^k)}-(\e\eta)^{n-1}&
|\p\om_{k,\e}| \langle a(\,\cdot\,,u) \rangle_k \langle  v \rangle_k\Big|
\\
&
\leqslant C\big(\e \eta +\e^\frac{1}{2}\eta^{\frac{n}{2}}\big) \|u\|_{W_2^1(\Om^\e)}\|v\|_{W_2^1(\Om^\e)}.
\end{align*}
Аналогично проверяем, что
\begin{align*}
\sum\limits_{k\in\mathbb{M}^\e}\bigg|\frac{|\p\om_{k,\e}|}
{|\p B_{b_*}(0)|}\big(a(\,\cdot\,,u),v&\big)_{L_2(\p B_{b_*}^k)}-(\e\eta)^{n-1}|\p\om_{k,\e}| \langle a(\,\cdot\,,u)  \rangle_k \langle  v  \rangle_k \bigg|
\\
&\leqslant C \big(\e \eta +\e^\frac{1}{2}\eta^{\frac{n}{2}}\big) \|u\|_{W_2^1(\Om^\e)}\|v\|_{W_2^1(\Om^\e)}.
\end{align*}
Из последних двух неравенств и (\ref{3.13}) выводим:
\begin{equation}\label{3.8}
\begin{aligned}
\sum\limits_{k\in\mathbb{M}^\e}\bigg|\frac{|\p\om_{k,\e}|}{|\p B_{b_*}(0)|}\big(a(\,\cdot\,,u),v&\big)_{L_2(\p B_{b_*}^k)} + \big(a(\,\cdot\,,u),v\big)_{L_2(\p\om_k^\e)}\bigg|
\\
&\leqslant C\left(\e\eta+\e^{\frac{1}{2}}\eta^{\frac{n}{2}}\right) \|u\|_{W_2^1(\Om^\e)}\|v\|_{W_2^1(\Om^\e)}.
\end{aligned}
\end{equation}
Проинтегрируем по частям в следующем интеграле:
\begin{equation}\label{12.14}
\begin{aligned}
0=&\frac{(\e\eta)^{n-1}|\p\om_{k,\e}|}{(2-n) |\p B_1(0)|}  \int\limits_{B_{b_*R_2\e}(M_k^\e)\setminus B_{b_*}^k} a(x,u)\overline{v} \D |x-M_k^\e|^{-n+2}\,dx
\\
=& \frac{\eta^{n-1}|\p\om_{k,\e}|}{|\p B_{b_*R_2}(0)|}  \big(a(\,\cdot\,,u),v\big)_{L_2(\p B_{b_*R_2\e}(M_k^\e))}
\\
&- \frac{|\p\om_{k,\e}|}{|\p B_{b_*R_2}(0)|}  \big(a(\,\cdot\,,u),v\big)_{L_2(\p B_{b_*}^k))}
 -\frac{(\e\eta)^{n-1} |\p\om_{k,\e}|}{(2-n)|\p B_1(0)|}
 &\\
 \hphantom{-}&\cdot\int\limits_{B_{b_* R_2\e(M_k^\e)}\setminus B_{b_*}^k} \nabla |x-M_k^\e|^{-n+2}\cdot \nabla (a(x,u(x))\overline{v(x)})\,dx.
\end{aligned}
\end{equation}
Элементарные оценки, неравенство Коши-Буняковского и лемма~\ref{lm3.3} с $\eta=1$ немедленно дают:
\begin{equation}\label{7.20}
\begin{aligned}
\sum\limits_{k\in\mathbb{M}^\e}\Bigg|&\frac{(\e\eta)^{n-1} |\p\om_{k,\e}|}{(2-n)|\p B_1(0)|} \int\limits_{B_{b_* R_2\e(M_k^\e)}\setminus B_{b_*}^k} \nabla |x-M_k^\e|^{-n+2}\cdot \nabla (a(x,u(x))\overline{v(x)})\,dx
\Bigg|
\\
&\leqslant C\sum\limits_{k\in\mathbb{M}^\e} \Big(\|v\|_{L_2(B_{b_*R_2\e(M_k^\e)}\setminus B_{b_*}^k)} \|\nabla a(\,\cdot\,,u)\|_{L_2(B_{b_*R_2\e(M_k^\e)}\setminus B_{b_*}^k)}
\\
 &\hphantom{\leqslant C\sum\limits_{k\in\mathbb{M}^\e} \Big(} +\|\nabla v\|_{L_2(B_{b_*R_2\e(M_k^\e)}\setminus B_{b_*}^k)} \|  a(\,\cdot\,,u)\|_{L_2(B_{b_*R_2\e(M_k^\e)}\setminus B_{b_*}^k)}
\Big)
\\
&\leqslant C\e^{\frac{1}{2}} \|u\|_{W_2^1(\Om^\e)} \|v\|_{W_2^1(\Om^\e)}.
\end{aligned}
\end{equation}
Учитывая данную оценку, выразим теперь скалярное произведение $(a(\,\cdot\,,u),v)_{L_2(\p B_{b_*}^k)}$
из равенства (\ref{12.14}) и подставим полученное выражение в (\ref{3.8}). Тогда немедленно получим требуемую оценку (\ref{12.15}).

Если $u,v\in W_2^2(\Om\setminus S)$, то все приведённые выше оценки могут быть улучшены за счёт дополнительного применения неравенств, вытекающих из леммы~\ref{lm3.3} и оценок (\ref{12.21}):
 \begin{equation}\label{7.23}
\begin{aligned}
&\sum\limits_{k\in\mathds{M}^\e} \|\nabla a(\,\cdot\,, u)\|_{L_2(B_b^k\setminus B_{b_*}^k)}^2\leqslant
C \sum\limits_{k\in\mathds{M}^\e} \|u\|_{W_2^1(B_b^k\setminus B_{b_*}^k)}^2
\\
&\hphantom{\sum\limits_{k\in\mathds{M}^\e} \|\nabla a(\,\cdot\,, u)\|_{L_2(B_b^k\setminus B_{b_*}^k)}^2} \leqslant C(\e^2\eta^2+\e\eta^n)\|u\|_{W_2^2(\Om\setminus \tilde{S})}^2,
\\
&\sum\limits_{k\in\mathds{M}^\e} \|\nabla v\|_{L_2(B_b^k\setminus B_{b_*}^k)}^2\leqslant C(\e^2\eta^2+\e\eta^n)\|v\|_{W_2^2(\Om\setminus \tilde{S})}^2,
\\
&\sum\limits_{k\in\mathbb{M}^\e} \|\nabla a(\,\cdot\,,u)\|_{L_2(B_{b_*R_2\e(M_k^\e)\setminus B_{b_*}^k})}^2
\leqslant C\sum\limits_{k\in\mathbb{M}^\e} \| u\|_{W_2^1(B_{b_*R_2\e(M_k^\e)}\setminus B_{b_*}^k)}^2
\\
&\hphantom{\sum\limits_{k\in\mathbb{M}^\e} \|\nabla a(\,\cdot\,,u)\|_{L_2(B_{b_*R_2\e(M_k^\e)\setminus B_{b_*}^k})}^2}\leqslant C\e \|u\|_{W_2^2(\Om\setminus \tilde{S})}^2,
\\
&\sum\limits_{k\in\mathbb{M}^\e} \|\nabla a(\,\cdot\,,v)\|_{L_2(B_{b_*R_2\e(M_k^\e)\setminus B_{b_*}^k})}^2
\leqslant C\e \|v\|_{W_2^2(\Om\setminus \tilde{S})}^2.
\end{aligned}
 \end{equation}
Это улучшение приводит к замене выражений  $\e\eta+\e^\frac{1}{2}\eta^{\frac{n}{2}}$ на $\e^2\eta^2+\e\eta^n$ в привёденных выше оценках, а в оценке (\ref{7.20}) степень $\e^\frac{1}{2}$ заменяется на $\e$. В результате мы приходим к неравенству (\ref{7.21}).
Лемма доказана.
\end{proof}

\begin{lemma}\label{lm3.5}
Для любой функции  $v\in W_2^1(\Omega_\e)$ выполнено неравенство:
\begin{equation*}
\|v\|_{L_2(\tilde{S})}^2\leqslant\delta\|\nabla v\|^2_{L_2(\Om^\e)}+C(\delta)\|v\|^2_{L_2(\Om^\e)},
\end{equation*}
где $\delta>0$ -- некоторая константа, константа $C(\delta)>0$ не зависит от $v$.
\end{lemma}
Доказательство леммы проводится аналогично доказательству леммы 3.1 из \cite{24}.

Функции $\a^\e$ и $\a$, заданные на $S$, определим также и на поверхности $\tilde{S}$ с помощью параметризации (\ref{12.36}) по следующему правилу:
\begin{equation}\label{12.1}
\a^\e(\tilde{x}):=\a^\e(x), \qquad \a(\tilde{x}):=\a(x),
\end{equation}
где точки $\tilde{x}\in\tilde{S}$ и $x\in S$ связаны формулой (\ref{12.36}).
Напомним, что в силу условия~\ref{A5} функция $\a$ является элементом пространства $W_\infty^1(S)$. Поэтому продолжение этой функции, введённое в (\ref{12.1}), является и элементом пространства $W_\infty^1(\tilde{S})$.

Для произвольной функции обозначим
\begin{equation*}
[u]_{\tilde{S}}:=u\big|_{\tau=(2bR_2+R_0)\e+0}-u\big|_{\tau=(2bR_2+R_0)\e-0}
\end{equation*}
и рассмотрим краевую задачу:
\begin{equation}\label{3.24}
\begin{gathered}
\bigg(-\sum\limits_{i,j=1}^n\frac{\p}{\p x_i} A_{ij}\frac{\p}{\p x_j}+\sum\limits_{j=1}^n
A_j\frac{\p}{\p x_j}
+A_0-\lambda \bigg)\tilde{u}_0=f\quad\text{в}\quad
\Om,
\\
\tilde{u}_0=0\quad\text{на}\quad \p\Om,\qquad[\tilde{u}_0]_{\tilde{S}}=0,\qquad \left[\frac{\p\tilde{u}_0}{\p\mathrm{n}}\right]_{\tilde{S}}+\a^0 a(\,\cdot\,,\tilde{u}_0)|_{\tilde{S}}=0.
\end{gathered}
\end{equation}

\begin{lemma}\label{lm3.7}
Существует фиксированное $\l_0$, не зависящее от $\e$, такое что
при $\l<\l_0$ задачи (\ref{2.13}), (\ref{2.15}) и (\ref{3.24}) однозначно разрешимы для любой $f\in L_2(\Om)$ и выполнены неравенства:
\begin{equation}\label{3.25}
\begin{aligned}
&\|\tilde{u}_0-u_0\|_{W_2^2(\Om\setminus(S\cup\tilde{S}))}\leqslant C \e^{\frac{1}{2}}(\|\a^0\|_{W_{\infty}^1(S)}+1)\|f\|_{L_2(\Om)},
\\
&\|\tilde{u}_0\|_{W_2^2(\Om\setminus\tilde{S})}\leqslant C(\|\a^0\|_{W_{\infty}^1(S)}+1)\|f\|_{L_2(\Om)},
\end{aligned}
\end{equation}
где константа $C$ не зависит от $\e$, $\a^0$ и $f$.
\end{lemma}

Существование $\l_0$ и разрешимость задач (\ref{2.13}), (\ref{2.15}) и (\ref{3.24}) легко проверяется аналогично доказательству леммы~\ref{lm3.2}. Проверка оценок (\ref{3.25})  основано на применении леммы 8.1 из \cite[Гл.3, \S 8]{27} и дословно воспроизводит доказательство леммы 3.7 из \cite{24}, где оно было дано для случая $n=2$.  При этом размерность области $\Om$ не играет никакой роли в доказательстве леммы.

Обозначим: $\tilde{\Om}^\e:=\big\{x\in\Om^\e:\, (2bR_2+R_0)\e<\tau<\frac{\tau_0}{2}\big\}$.

\begin{lemma}\label{lm12.2}
Пусть $\a\in L_\infty(S)$ -- произвольная функция, которую продолжим на поверхность $\tilde{S}$ согласно (\ref{12.1}). Тогда для всех $u,v\in W_2^1(\tilde{\Om}^\e)$ верна оценка:
\begin{equation*}
(\a u,v)_{L_2(\tilde{S})} \leqslant C(\|\a\|_S+\e) \|u\|_{W_2^1(\tilde{\Om}^\e)} \|v\|_{W_2^1(\tilde{\Om}^\e)},
\end{equation*}
 где $C$ -- некоторая константа, не зависящая от параметра $\e$ и функций $u$, $v$.
\end{lemma}

\begin{proof}
Функции $u$, $v$ продолжим в область $\vw\setminus\tilde{\Om}^\e$ чётным образом относительно $\tilde{S}$. А именно, для каждой точки $x\in\vw\setminus\tilde{\Om}^\e$ однозначно найдём точки $s\in S$ и $\tau\in(0,(2bR_2+R_0)\e)$ по правилу $x=s+\tau \nu(s)$ и положим
\begin{equation*}
u(x)=u\big(s+((4bR_2+2R_0)\e-\tau)\nu(s)\big),\quad v(x)=v\big(s+((4bR_2+2R_0)\e-\tau)\nu(s)\big).
\end{equation*}
В силу условия~\ref{A1} такое продолжение определено корректно, продолженные функции являются элементами пространства $W_2^1(\vw)$ и верны оценки
\begin{equation}\label{12.38}
\|u\|_{W_2^1(\vw)}\leqslant C\|u\|_{W_2^1(\tilde{\Om}^\e)}, \qquad
\|v\|_{W_2^1(\vw)}\leqslant C\|v\|_{W_2^1(\tilde{\Om}^\e)}.
\end{equation}
Здесь и всюду до конца доказательства через  $C$ обозначаем различные константы, не зависящие от  $\e$, $u$, $v$. Отметим ещё, что в силу равенства
\begin{equation*}
u\big|_{\tau=(2bR_2+R_0)\e}=u\big|_{\tau=0} + \int\limits_{0}^{(2bR_2+R_0)\e}
\frac{\p u}{\p\tau}\,d\tau
\end{equation*}
верна оценка
\begin{equation}\label{12.39}
\Big\|u\big|_{\tau=(2bR_2+R_0)\e}-u\big|_{\tau=0}\big\|_{L_2(S)}\leqslant C\e \|u\|_{W_2^1(\vw)}.
\end{equation}
Такая же оценка верна и для функции $v$. Отметим ещё, что дифференциалы площади повехностей $S$ и $\tilde{S}$ связаны равенствами $d\tilde{s}=(1+\e J_\e(s))ds$, где $J_\e$ -- непрерывно дифференцируемая функция, ограниченная равномерно по $\e$ и $s\in S$ вместе со  своими пространственными производными первого порядка.

Используя указанные свойства дифференциалов площадей $S$ и $\tilde{S}$ и оценки (\ref{12.39}), (\ref{12.24}), (\ref{12.38}), получаем:
\begin{equation*}
\big|(\a u,v)_{L_2(\tilde{S})}-(\a u,v)_{L_2(S)}\big| \leqslant C\e \|u\|_{W_2^1(\tilde{\Om}^\e)}  \|v\|_{W_2^1(\tilde{\Om}^\e)}.
\end{equation*}
Применяя теперь к скалярному произведению $(\a u,v)_{L_2(S)}$ вторую оценку из (\ref{12.33}), приходим к утверждению леммы. Лемма доказана.
 \end{proof}

\begin{lemma}\label{lm3.6} Пусть выполнено условие~\ref{A5}. Тогда для всех
$v\in W_2^1(\Om^\e)$ верна оценка
\begin{equation*}
\big|\big((\a^\e-\a^0)a(\,\cdot\,,\tilde{u}_0),v_\e\big)_{L_2(\tilde{S})}\big|
\leqslant C(\kappa(\e)+\e) \|\tilde{u}_0\|_{W_2^1(\Om)} \|v_\e\|_{W_2^1(\Om^\e)},
\end{equation*}
где константа $C$ не зависит от $\e$, $\tilde{u}_0$ и $v$.
\end{lemma}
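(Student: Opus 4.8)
The plan is to obtain this estimate directly from Lemma~\ref{lm12.2}, applied with the multiplier $\a:=\a^\e-\a^0$ and the test functions $u:=a(\,\cdot\,,\tilde u_0)$, $v:=v_\e$, and then to bound the two Sobolev factors that appear on the right-hand side of that lemma by invoking Assumption~\ref{A5} and Lemma~\ref{lm12.3}.

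First I would check that the hypotheses of Lemma~\ref{lm12.2} are met. By (\ref{2.8}) the function $\a^\e$ is real-valued and belongs to $L_\infty(S)$, and by Assumption~\ref{A5} the limiting profile $\a^0$ is real-valued and lies in $W_\infty^1(S)\subset L_\infty(S)$; hence $\a^\e-\a^0\in L_\infty(S)$ is real-valued, and we use its extension to $\tilde S$ prescribed by (\ref{12.1}), exactly as required by Lemma~\ref{lm12.2}. Next I would note that $\tilde u_0\in W_2^1(\Om)$, since it solves problem (\ref{3.24}) and in particular $[\tilde u_0]_{\tilde S}=0$; therefore Lemma~\ref{lm12.3} applies and gives $a(\,\cdot\,,\tilde u_0)\in W_2^1(\{x:\dist(x,S)<\tau_0\})$ together with $\|a(\,\cdot\,,\tilde u_0)\|_{W_2^1(\{x:\dist(x,S)<\tau_0\})}\le C\|\tilde u_0\|_{W_2^1(\Om)}$. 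Since $b>1$, each hole $\om_k^\e$ is contained in $B_{R_2\e\eta}(M_k^\e)$ with $\dist(M_k^\e,S)\le R_0\e$, so every point of $\tht^\e$ lies within distance $(R_0+R_2)\e<(2bR_2+R_0)\e$ of $S$; consequently $\tilde\Om^\e$ meets none of the holes and $\tilde\Om^\e=\{x\in\Om:(2bR_2+R_0)\e<\tau<\tfrac{\tau_0}{2}\}\subset\{x:\dist(x,S)<\tau_0\}$. Hence both $a(\,\cdot\,,\tilde u_0)$ and $v_\e$ restrict to elements of $W_2^1(\tilde\Om^\e)$ with $\|a(\,\cdot\,,\tilde u_0)\|_{W_2^1(\tilde\Om^\e)}\le C\|\tilde u_0\|_{W_2^1(\Om)}$ and $\|v_\e\|_{W_2^1(\tilde\Om^\e)}\le\|v_\e\|_{W_2^1(\Om^\e)}$.

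Then I would simply assemble the pieces. Lemma~\ref{lm12.2} yields
\[
\big|\big((\a^\e-\a^0)a(\,\cdot\,,\tilde u_0),v_\e\big)_{L_2(\tilde S)}\big|\le C\big(\|\a^\e-\a^0\|_S+\e\big)\,\|a(\,\cdot\,,\tilde u_0)\|_{W_2^1(\tilde\Om^\e)}\,\|v_\e\|_{W_2^1(\tilde\Om^\e)},
\]
Assumption~\ref{A5} bounds $\|\a^\e-\a^0\|_S\le\kappa(\e)$, and the two norm estimates recorded above turn the right-hand side into $C(\kappa(\e)+\e)\|\tilde u_0\|_{W_2^1(\Om)}\|v_\e\|_{W_2^1(\Om^\e)}$, which is the assertion. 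I do not expect a genuine obstacle here: the substantive content — transferring a bound in the norm $\|\,\cdot\,\|_S$ on $S$ to a boundary form on the shifted surface $\tilde S$ at the cost of an $O(\e)$ correction, and controlling the nonlinear term $a(\,\cdot\,,\tilde u_0)$ — has already been isolated in Lemmas~\ref{lm12.2} and~\ref{lm12.3}. The only points that require care are the real-valuedness of $\a^\e-\a^0$ and the verification that $\tilde\Om^\e$ avoids the holes, which is what makes the $W_2^1$-bound from Lemma~\ref{lm12.3} transfer legitimately to $\tilde\Om^\e$.
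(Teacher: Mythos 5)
Your proposal is correct and follows essentially the same route as the paper: the paper's proof likewise applies Lemma~\ref{lm12.3} to get $a(\,\cdot\,,\tilde{u}_0)\in W_2^1(\tilde{\Om}^\e)$ with the bound $C\|\tilde{u}_0\|_{W_2^1(\Om)}$, and then invokes Lemma~\ref{lm12.2} with $u=a(\,\cdot\,,\tilde{u}_0)$, $\a=\a^\e-\a^0$ together with Assumption~\ref{A5}. Your additional verifications (membership of $\a^\e-\a^0$ in $L_\infty(S)$, the extension to $\tilde{S}$ via (\ref{12.1}), and the fact that $\tilde{\Om}^\e$ avoids the holes) are correct details that the paper leaves implicit.
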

\begin{proof}
Так как $\tilde{u}_0\in W_2^1(\Om)$, то в силу леммы~\ref{lm12.3} функция $a(x,\tilde{u}_0)$ является элементом пространства $W_2^1(\tilde{\Om}^\e)$ и верна оценка
\begin{equation*}
\|a(\,\cdot\,,\tilde{u}_0)\|_{W_2^1(\tilde{\Om}^\e)} \leqslant C\|\tilde{u}_0\|_{W_2^1(\Om)},
\end{equation*}
где $C$ -- некоторая константа, не зависящая от $\e$ и $\tilde{u}_0$.
 Применяя теперь лемму~\ref{lm12.2} с $u=a(x,\tilde{u}_0)$ и $\a=\a^\e-\a^0$ и учитывая условие~\ref{A5}, приходим к требуемой оценке. Лемма доказана.
 \end{proof}

\section{Усреднённая задача без условий на $S$}

В настоящем параграфе мы доказываем теорему~\ref{th1}. Всюду в доказательстве считаем, что параметр $\l$ выбирается из условия $\l<\l_0$, где $\l_0$ -- отрицательное и достаточно большое по модулю число так, что оно не превосходит аналогичную константу из леммы~\ref{lm3.2}.

Разность решений задач задач (\ref{2.5}) и (\ref{2.12}), обозначаемая через $v_\e=u_\e-u_0$, удовлетворяет краевой задаче
\begin{equation}\label{7.7}
\begin{gathered}
\bigg(-\sum\limits_{i,j=1}^n\frac{\p}{\p x_i} A_{ij}\frac{\p}{\p x_j}+\sum\limits_{j=1}^n
A_j\frac{\p}{\p x_j}
+A_0-\lambda\bigg) v_\e=0\quad\text{в}\quad\Om^\e,
\\
v_\e=0\quad\text{на}\quad\p\Om,\qquad
\frac{\p v_\e}{\p \mathrm{n}}
=-\frac{\p u_0}{\p \mathrm{n}}
-a(\,\cdot\,,u_\e) \quad\text{на}\quad\p\tht^\e.
\end{gathered}
\end{equation}
Выпишем для этой задачи интегральное тождество, взяв $v_\e\in\Ho^1(\Om^\e,\p\Om)$ в качестве пробной функции:
\begin{equation}\label{4.1}
\mathfrak{h}_0(v_\e,v_\e)-\l \|v_\e\|_{L_2(\Om^\e)}^2=
-\left(\frac{\p u_0}{\p \mathrm{n}},v_\e\right)_{L_2(\p\tht^\e)}
-\big(a(\,\cdot\,,u_\e),v_\e\big)_{L_2(\p\tht^\e)}.
\end{equation}

Основная идея доказательства теоремы состоит в том, чтобы оценить сверху правую часть равенства (\ref{4.1}) и снизу   левую часть этого равенства, что в итоге даст оценку для функции $v_\e$.

Вначале рассмотрим случай $a\equiv0$. В этом случае второе слагаемое в правой части равенства (\ref{4.1}) равняется  нулю, а для оценки
 первого слагаемого  проинтегрируем по частям следующим образом:
\begin{equation}\label{4.0}
 \begin{aligned}
\int\limits_{B_1^k\setminus \om_k^\e}\overline{v}_\e \bigg(&-\sum\limits_{i,j=1}^n\frac{\p}{\p x_i}A_{ij}\frac{\p}{\p
x_j}+\sum\limits_{j=1}^n
A_j\frac{\p}{\p x_j}
+A_0\bigg)u_0 \,dx
\\
=&\int\limits_{\p\om_k^\e}\frac{\p u_0}{\p \mathrm{n}}\overline{v}_\e\,ds-\int\limits_{\p B_1^k}\frac{\p u_0}{\p \mathrm{n}}\overline{v}_\e \,ds+\sum\limits_{i,j=1}^n\int\limits_{B_1^k\setminus \om_k^\e} A_{ij}\frac{\p u_0}{\p
x_j}\frac{\p\overline{v}_\e}{\p x_i}\,dx
\\
&+\sum\limits_{j=1}^n\int\limits_{B_1^k\setminus \om_k^\e} A_j\frac{\p
u_0}{\p x_j}\overline{v}_\e \,dx
+\int\limits_{B_1^k\setminus \om_k^\e} A_0 u_0\overline{v}_\e\,dx.
\end{aligned}
\end{equation}
Из последнего равенства и уравнения из (\ref{2.12}) следует
\begin{equation}\label{4.2}
\begin{aligned}
\left(\frac{\p u_0}{\p \mathrm{n}},v_\e \right)_{L_2(\p \om_k^\e)}=&\left(\frac{\p u_0}{\p \mathrm{n}},v_\e \right)_{L_2(\p B^k_1)}-\sum\limits_{i,j=1}^n\left(A_{ij}\frac{\p u_0}{\p
x_j},\frac{\p v_\e}{\p  x_i}\right)_{L_2( B_1^k\setminus \om_k^\e)}
\\
&-\sum\limits_{j=1}^n\left(A_j\frac{\p u_0}{\p x_j},v_\e \right)_{L_2(B_1^k\setminus \om_k^\e)}
-(A_0u_0,v_\e)_{L_2(B_1^k\setminus \om_k^\e)}
\\
&+(f,v_\e)_{L_2(B_1^k\setminus \om_k^\e)}
+\lambda (u_0,v_\e)_{L_2( B_1^k\setminus \om_k^\e)}.
\end{aligned}
\end{equation}
Введем вспомогательную задачу
\begin{equation}\label{4.3}
\begin{gathered}
\Delta W_{k,i}^\e=0\quad\text{в}\quad B_{b_*}^k\setminus B_1^k,
\\
\frac{\p W_{k,i}^\e}{\p r}=\frac{\p\vr_i^k}{\p r}\quad\text{на}\quad\p B_1^k,\quad \frac{\p W_{k,i}^\e}{\p r}=0\quad\text{на}\quad\p B_{b_{*}}^k,
\end{gathered}
\end{equation}
где $\vr^k=(\vr^k_1,\ldots,\vr^k_n)=x-M_k^\e$, $r=|\vr^k|$. Решением этой задачи является функция
\begin{align*}
W_{k,i}^\e=\frac{-(b+1)^{-n}\vr_i^k}{2^{-n}-(b+1)^{-n}} +\frac{2^{-n}r^{-n}\vr_i^k}{(-n+1)(R_2\eta\e)^{-n}(2^{-n}-(b+1)^{-n})}.
\end{align*}
Эта функция удовлетворяет неравенству:
\begin{equation}\label{4.4}
|\nabla W_{k,i}^\e|\leqslant C\quad \text{в}\quad B_{b_{*}}^k\setminus B_1^k,
\end{equation}
где константа $C$ не зависит от  $W_{k,i}^\e$. Проинтегрируем по частям в равенстве
\begin{equation*}
\sum_{i,j=1}^n\int\limits_{B_{b_{*}}^k\setminus B_1^k} A_{ij}\frac{\p u_0}{\p x_j}\overline{v}_\e\Delta W_{k,i}^\e \,dx=0
\end{equation*}
с учётом граничных условий задачи (\ref{4.3}). В результате получим:
\begin{equation*}
\left(\frac{\p u_0}{\p \mathrm{n}}, v_\e\right)_{L_2(\p B_1^k)}=\sum_{i,j=1}^n\int\limits_{B_{b_{*}}^k\setminus B_1^k} \nabla W_{k,i}^\e
\nabla A_{ij}\frac{\p u_0}{\p x_j}\overline{v}_\e \,dx.
\end{equation*}
Из последнего равенства, (\ref{4.2}) и (\ref{4.4}) выводим:
\begin{align*}
\left|\left(\frac{\p u_0}{\p \mathrm{n}},v_\e\right)_{L_2(\p \tht^\e)}\right|\leqslant & C\bigg(\sum_{k\in
\mathbb{M}^\e}\|u_0\|^2_{W_2^1(B_{b_*}^k\setminus \om_k^\e)}\bigg)^{\frac{1}{2}}\bigg(\sum_{k\in \mathbb{M}^\e}\|\nabla
v_\e\|^2_{L_2(B_{b_*}^k\setminus \om_k^\e)}\bigg)^{\frac{1}{2}}\\
&+C\bigg(\sum_{k\in \mathbb{M}^\e}\|u_0\|^2_{W_2^2(B_{b_*}^k\setminus \om_k^\e)}\bigg)^{\frac{1}{2}}\bigg(\sum_{k\in \mathbb{M}^\e}\|
v_\e\|^2_{L_2(B_{b_*}^k\setminus \om_k^\e)}\bigg)^{\frac{1}{2}}.
\end{align*}
Здесь и всюду далее символом $C$ обозначаем константы, не зависящие $u_0$, $v_\e$ и $\e$. Правую часть последнего неравенства оценим с помощью леммы \ref{lm3.3} и оценки (\ref{4.7}):
\begin{equation}\label{4.8}
\left|\left(\frac{\p u_0}{\p \mathrm{n}},v_\e\right)_{L_2(\p \tht^\e)}\right|\leqslant C(\e\eta+\varepsilon^{\frac{1}{2}}\eta^{\frac{n}{2}}) \|f\|_{L_2(\Om)}\|v_\e\|_{W_2^1(\Om^\e)}.
\end{equation}
Из последнего неравенства и (\ref{4.9}) уже вытекает оценка (\ref{2.17}).

Теперь рассмотрим случай $a\neq0$.
Оценим правую часть равенства (\ref{4.1}). Для первого слагаемого остается справедливой оценка (\ref{4.8}).  В силу условий (\ref{2.2}) выполнено неравенство
$|a(x,u_\e)|\leqslant C|u_\e|$, применяя
которое, (\ref{3.23}) и лемму \ref{lm3.1}, приходим к оценке
\begin{equation*}
\big|\big(a(\,\cdot\,,u_\e),v_\e\big)_{L_2(\p\tht^\e)}
\big|\leqslant
C(\e\eta +\eta^{n-1})\|f\|_{L_2(\Om)}\|v_\e\|_{W_2^1(\Om^\e)}.
\end{equation*}
Неравенство (\ref{2.18}) вытекает из последней оценки, (\ref{4.8}), (\ref{4.1}) и (\ref{4.9}). Теорема \ref{th1} доказана.

\section{Усреднённая задача с дельта-взаимодействием}
\label{s5}

В данном параграфе мы доказываем теорему~\ref{th2}. По сравнению с доказательством предыдущей теоремы здесь возникают дополнительные трудности, что требует привлечения новой техники.

Первая трудность связана с тем, что многообразие $S$ может пересекать полости $\om_k^\e$ и это вызывает сложности при попытки прямого вывода нормы разности $u_\e-u_0$ по аналогии с предыдущим параграфом. Для преодоления этой трудности мы вводим многообразие $\tilde{S}$ и рассматриваем краевую задачу (\ref{3.24}). Многообразие $\tilde{S}$ не пересекает полостей $\om_k^\e$ и это в итоге позволит нам оценить разность $u_\e-\tilde{u}_0$. Поэтому вначале мы оценим норму разности $u_\e-\tilde{u}_0$, а затем уже норму разности $\tilde{u}_0-u_0$.

Как и в доказательстве теоремы~\ref{th1}, выберем и зафиксируем достаточно большое по модулю отрицательное  $\l_0$ так, чтобы гарантировать разрешимость задач для $u_\e$, $u_0$, $\tilde{u}_0$. Такая возможность гарантируется леммами~\ref{lm3.2},~\ref{lm3.7}.

Обозначим $v_\e:=u_\e-\tilde{u}_0$. Функция $v_\e$ является решением следующей задачи
\begin{align*}
&\bigg(-\sum\limits_{i,j=1}^n\frac{\p}{\p x_i} A_{ij}\frac{\p}{\p x_j}+\sum\limits_{j=1}^n
A_j\frac{\p}{\p x_j} +A_0-\lambda \bigg)v_\e=0\quad\text{в}\quad
\Om^\e,
\\
&v_\e=0\quad\text{на}\quad \p\Om,
\qquad
\frac{\p v_\e}{\p \mathrm{n}} =-\frac{\p\tilde{u}_0}{\p\mathrm{n}} -a(\,\cdot\,,u_\e)\quad\text{на}\quad\p\tht^\e,
\\
&[v_\e]_{\tilde{S}}=0,\qquad
\left[\frac{\p v_\e}{\p\mathrm{n}}\right]_{\tilde{S}}=\a^0 a(\,\cdot\,,\tilde{u}_0)|_{\tilde{S}}.
\end{align*}
Выпишем для этой задачи интегральное тождество, взяв $v_\e$ в качестве пробной функции:
\begin{equation}\label{5.1}
\begin{aligned}
\mathfrak{h}_0(v_\e,v_\e)&+\big(a(\,\cdot\,,u_\e) -a(\,\cdot\,,\tilde{u}_0),v_\e\big)_{L_2(\p\tht^\e)} -\l\|v_\e\|_{L_2(\Om^\e)}^2
\\
=&-\left(\frac{\p\tilde{u}_0}{\p\mathrm{n}},v_\e\right)_{L_2(\p\tht^\e)}
-\big(a(\,\cdot\,,\tilde{u}_0),v_\e\big)_{L_2(\p\tht^\e)}
\\
& - (\a^0 a(\,\cdot\,,\tilde{u}_0),v_\e)_{L_2(\tilde{S})}.
\end{aligned}
\end{equation}

Наша дальнейшая цель -- оценить сверху правую часть равенства (\ref{5.1}) и снизу левую часть этого равенства.
Всюду далее до конца доказательства через $C$ обозначаем  различные несущественные константы, не зависящие от $\e$, $v_\e$, $f$, $\tilde{u}_0$, а также пространственных переменных и  индекса $k\in\mathbb{M}^\e$, который будет введён ниже.

Используя свойство (\ref{3.2}) и неравенство Коши-Буняковского, выводим:
\begin{equation*}
\Big|\big(a(\,\cdot\,,u_\e)-a(\,\cdot\,,\tilde{u}_0),v_\e\big)_{L_2(\p\tht^\e)}
\Big|
\leqslant a_0 \|u_\e-\tilde{u}_0\|_{L_2(\p\tht^\e)}\|v_\e\|_{L_2(\p\tht^\e)}\leqslant C\|v_\e\|^2_{L_2(\p\tht^\e)}.
\end{equation*}
В силу последнего неравенства,  (\ref{4.9}) и леммы~\ref{lm3.1} теперь  следует, что увеличивая при необходимости модуль  числа $\l_0$,
при $\l<\l_0$ будем иметь:
\begin{equation}\label{5.3}
\bigg| \mathfrak{h}_0(v_\e,v_\e)+\big(a(\,\cdot\,,u_\e) -a(\,\cdot\,,\tilde{u}_0),v_\e\big)_{L_2(\p\tht^\e)} -\l\|v_\e\|_{L_2(\Om^\e)}^2 \bigg|\geqslant
C \|v_\e\|_{W_2^1(\Om_\e)}^2.
\end{equation}

Первое слагаемое в правой части  неравенства (\ref{5.1}) оценивается так же, как и первое слагаемое в правой части (\ref{4.1}) в случае $a\equiv0$. Поэтому, повторяя выкладки, проведенные при выводе оценки (\ref{4.8}), получим неравенство:
\begin{equation}\label{5.2}
\begin{aligned}
\left|\left(\frac{\p\tilde{u}_0}{\p\mathrm{n}},v_\e\right)_{L_2(\p\tht^\e)}\right|\leqslant C(\e\eta+\e^{\frac{1}{2}}\eta^{\frac{n}{2}})\|f\|_{L_2(\Om)}\|v_\e\|_{W_2^1(\Om^\e)}.
\end{aligned}
\end{equation}

В дальнейших оценках, не оговаривая отдельно, мы неоднократно будем пользоваться равномерной ограниченность площадей $|\p\om_{k,\e}|$, установленной в лемме~\ref{lm12.9}. Согласно лемме \ref{lm3.4} и второй оценке в (\ref{3.25}), выполнено неравенство:
\begin{equation}\label{5.4}
\begin{aligned}
\sum\limits_{k\in\mathbb{M}^\e}&\bigg|\frac{\eta^{n-1}|\p\om_{k,\e}|} {|\p B_{b_*R_2}(0)|}(a(\,\cdot\,,\tilde{u}_0),v_\e)_{L_2(\p B_{b_*R_2\e}(M_k^\e))} + (a(\,\cdot\,,\tilde{u}_0),v_\e)_{L_2(\p\om_k^\e)}\bigg|\\
&\leqslant C  \e^{\frac{1}{2}}  \|\tilde{u}_0\|_{W_2^1(\Om)}
\|f\|_{L_2(\Om)}\|v_\e\|_{W_2^1(\Om^\e)}
\leqslant C \e^{\frac{1}{2}}
\|f\|_{L_2(\Om)}\|v_\e\|_{W_2^1(\Om^\e)}.
\end{aligned}
\end{equation}

Пусть $\xi=(\xi',\xi_n)$, $\xi'=(\xi_1,\xi_2,\ldots,\xi_{n-1})$ -- декартовы координаты в $\mathds{R}^n$,
$$
\Xi:=\{\xi:\, |\xi'|<bR_2,\, |\xi_n|<b R_2\},\qquad \Ups:= \{\xi:\, |\xi'|<bR_2, \, \xi_n=bR_2\}.
$$
Рассмотрим вспомогательную задачу:
\begin{align*}
&\Delta Y=0\quad\text{в}\quad\Xi\setminus B_{b_* R_2}(0), \qquad \frac{\p Y}{\p|\xi|}=1\quad\text{на}\quad\p B_{b_*R_2}(0),
\\
&\frac{\p Y}{\p \nu}= |\p B_{b_*}(0)| \z\left(\frac{\xi'}{R_2}\right) \qquad\text{на}\quad \Ups,\qquad \frac{\p Y}{\p \nu}=0 \quad\text{на}\quad\p\Xi\setminus\Ups.
\end{align*}
где $\nu$ -- внешняя нормаль к $\p \Xi$.
Функция $\z$ по предположению гладкая, а равенство (\ref{2.8b}) обеспечивает выполнение условия разрешимости этой задачи:
\begin{equation*}
\int\limits_{\p B_{b_*R_2}(0)}\,d\xi=\int\limits_{\Ups}|\p B_{b_*}(0)| \z\left(\frac{\xi'}{R_2}\right)\, d\xi'=|\p B_{b_*}(0)|R_2^{n-1}\int\limits_{\mathds{R}^{d-1}} \z(\xi')\, d\xi'.
\end{equation*}
Существует единственное решение этой задачи, удовлетворяющее условию
\begin{equation*}
\int\limits_{\Xi\setminus B_{b_* R_2}(0)} Y(\xi)\,d\xi=0.
\end{equation*}
Далее считаем, что функция $Y$ выбрана из этого условия. Кроме того, в силу стандартных теорем повышения гладкости сразу заключаем, что функция $Y$ по крайней мере является элементом пространства $W_\infty^1(\Xi\setminus B_{b_* R_2}(0))$.

Фиксируем теперь произвольный индекс $k\in \mathbb{M}^\e$ определим переменные $\xi$  следующим образом: $\xi:=y\e^{-1}$, где $y=(y_1,\ldots,y_n)$ -- декартовы координаты в $\mathds{R}^n$ с центром в точке $M_k^\e$, причём ось $y_n$ направлена вдоль положительного направления вектора нормали к поверхности $S$ в точке $M_{k,\bot}^\e$.
Соответствующую функцию $Y(\xi)$, выраженную таким образом через переменные $(s,\tau)$ и, следовательно, через переменные $x$, обозначим символом $Y^\e(x)$. Ещё положим:
$\Xi_k^\e:=\{x:\, \xi\in \Xi\}$, $\Ups_k^\e:=\{x:\, \xi\in\Ups\}$.
Проинтегрируем по частям в равенстве
\begin{equation*}
\e\int\limits_{\Xi_k^\e\setminus B_{b_*R_2\e}(M_k^\e)}a(\,\cdot\,,\tilde{u}_0)\overline{v}_\e\Delta Y^\e \,d\xi=0.
\end{equation*}
В результате получим
\begin{align*}
 |\p B_{b_*}(0)| \left(\z\left(\frac{|\xi'|}{R_2}\right)
 a(\,\cdot\,,\tilde{u}_0),v_\e\right)_{L_2(\Ups_k^\e)} & -(a(\,\cdot\,,\tilde{u}_0),v_\e)_{L_2(\p B_{b_*R_2\e}(M_k^\e))}
 \\
 &=\e\int\limits_{\Xi\setminus B_{b_*R_2\e}(M_k^\e)}\nabla a(\,\cdot\,,\tilde{u}_0)\overline{v}_\e \nabla Y^\e\,dx.
\end{align*}
Суммируя последние равенства по $k\in\mathbb{M}^\e$ и учитывая неравенства  (\ref{12.20}), (\ref{5.4}), лемму \ref{lm3.4} и второе неравенство в (\ref{3.25}), выводим:
\begin{equation}\label{5.5}
\begin{aligned}
\bigg|(a(\,\cdot\,,\tilde{u}_0),v_\e)_{L_2(\p \theta^\e)}& +  \sum\limits_{k\in\mathbb{M}^\e} \frac{\eta^{n-1}|\p \om_{k,\e}|}{R_2^{n-1}}   \left(\z\left(\frac{|\xi'|}{R_2}\right)
 a(\,\cdot\,,\tilde{u}_0),v_\e\right)_{L_2(\Ups_k^\e)}\bigg|
 \\
 &\leqslant C
\varepsilon^{\frac{1}{2}}
\|f\|_{L_2(\Omega)}\|v_\e\|_{W_2^1(\Omega_\e)}.
\end{aligned}
\end{equation}

Определим множества
$$
\Om_k^\e:=\big\{x\in\Om:\, |\xi'|<bR_2,\, \xi_n>bR_2,\, \tau<\e(2bR_2+R_0)\big\}.
$$
Это цилиндрические области, нижними основаниями которых служат $\Ups_k^\e$, а верхними -- пересечения $\tilde{S}\cap B_{bR_2\e}(\tilde{M}_{k,\bot}^\e)$, где $\tilde{M}_{k,\bot}^\e$ -- точка пересечения оси $Oy_n$ с поверхностью $\tilde{S}$. С учётом финитности срезающей  функции $\z$ проинтегрируем по частям следующим образом:
\begin{equation}
\begin{aligned}
\int\limits_{\Om_k^\e} \z\left(\frac{|\xi'|}{R_2}\right) &\frac{\p\ }{\p y_n} a(\,\cdot\,,\tilde{u}_0)\overline{v}_\e\,dx=-\left( \z\left(\frac{|\xi'|}{R_2}\right)a(\,\cdot\,,\tilde{u}_0), v_\e\right)_{L_2(\Ups_k^\e)} \\
&+\left( \z\left(\frac{|\xi'|}{R_2}\right)a(\,\cdot\,,\tilde{u}_0),\cos(Oy_n,\tilde{\nu}) v_\e\right)_{L_2(\tilde{S}\cap B_{bR_2\e}(\tilde{M}_{k,\bot}^\e))},
\end{aligned}\label{12.18}
\end{equation}
где $\tilde{\nu}$ -- нормаль к поверхности $\tilde{S}$, направленная от поверхности $S$. Ясно, что верна равномерная по $\e$, $k$ и $x\in \tilde{S}\cap B_{bR_2\e}(\tilde{M}_{k,\bot}^\e)$ оценка
\begin{equation}\label{12.17}
\big|\cos(Oy_n,\tilde{\nu})-1\big|\leqslant C\e.
\end{equation}
Из данной оценки, равенства (\ref{12.18}) и интегрирования оценки (\ref{3.7}) по соответствующим областям следует, что
\begin{equation}\label{6.7a}
\begin{aligned}
 \bigg|&\sum\limits_{k\in\mathbb{M}^\e}
\frac{\eta^{n-1}|\p\om_{k,\e}|}{R_2^{n-1}}
\left( \z\left(\frac{|\xi'|}{R_2}\right)a(\,\cdot\,,\tilde{u}_0), v_\e\right)_{L_2(\Ups_k^\e)}
\\
&- \sum\limits_{k\in\mathbb{M}^\e}
\frac{\eta^{n-1}|\p\om_{k,\e}|}{R_2^{n-1}}  \left( \z\left(\frac{|\xi'|}{R_2}\right)a(\,\cdot\,,\tilde{u}_0), v_\e\right)_{L_2(\tilde{S}\cap B_{bR_2\e}(\tilde{M}_{k,\bot}^\e))}\bigg|
\\
&
\leqslant C\e^{\frac{1}{2}}\|\tilde{u}_0\|_{W_2^1(\Om)}\|v_\e\|_{W_2^1(\Om^\e)}.
\end{aligned}
\end{equation}

Пусть $x\in S\cap B_{bR_2\e}(M_{k,\bot}^\e)$ -- произвольная точка, $x^\bot$ -- её проекция на касательную гиперплоскость к поверхности $S$ в точке $M_{k,\bot}^\e$. Ясно, что $|\xi'|=|x^\bot-M_{k,\bot}^\e|\e^{-1}$. Так как поверхность $S$ гладкая, а линейный размер куска $S\cap B_{bR_2\e}(M_{k,\bot}^\e)$ порядка $O(\e)$, то верно следующее неравенство:
\begin{equation*}
\left|\frac{|\xi'|}{\e R_2}- \frac{|x-M_{k,\bot}^\e|}{\e R_2}\right|=
\left|\frac{|x^\bot-M_{k,\bot}^\e|}{\e R_2}- \frac{|x-M_{k,\bot}^\e|}{\e R_2}\right|\leqslant C\e,
\end{equation*}
где константа $C$ не зависит от $\e$, $k\in \mathbb{M}^\e$ и $x\in S\cap B_{b R_2\e}(M_{k,\bot}^\e)$. Учитывая последнюю оценку и определение функции $\a^\e$, теперь видим, что
\begin{equation}\label{6.7c}
\begin{aligned}
 \bigg|\sum\limits_{k\in\mathbb{M}^\e}
&\frac{\eta^{n-1}|\p\om_{k,\e}|}{R_2^{n-1}}  \left( \z\left(\frac{|\xi'|}{R_2}\right)a(\,\cdot\,,\tilde{u}_0), v_\e\right)_{L_2(\tilde{S}\cap B_{bR_2\e}(\tilde{M}_{k,\bot}^\e))}
\\
&-(\a^\e a(\,\cdot\,,\tilde{u}_0), v_\e)_{L_2(\tilde{S})}\bigg|
\leqslant  C\e \|a(\,\cdot\,,\tilde{u}_0)\|_{L_2(\tilde{S})}\|v_\e\|_{L_2(\tilde{S})}
\\
&\hphantom{-(\a^\e a(\,\cdot\,,\tilde{u}_0), v_\e)_{L_2(\tilde{S})}|..
}
\leqslant  C\e \| \tilde{u}_0 \|_{W_2^1(\Om)}\|v_\e\|_{L_2(\Om^\e)}.
\end{aligned}
\end{equation}
Эта оценка вместе с (\ref{5.5}), (\ref{6.7a})
приводит к неравенству
\begin{equation}\label{6.7b}
 \Big|(a(\,\cdot\,,\tilde{u}_0),v_\e)_{L_2(\p \theta^\e)}
+ (\a^\e a(\,\cdot\,,\tilde{u}_0),v_\e)_{L_2(\tilde{S})}\Big|
\leqslant C\e^{\frac{1}{2}}\|\tilde{u}_0\|_{W_2^1(\Om)}\|v_\e\|_{W_2^1(\Om^\e)}.
\end{equation}
Отсюда уже в силу леммы~\ref{lm3.6} получаем:
\begin{equation*}
\left|(a(\,\cdot\,,\tilde{u}_0),v_\e)_{L_2(\p \theta^\e)} + (\a^0 a(\,\cdot\,,\tilde{u}_0),v_\e)_{L_2(\tilde{S})}\right|\leqslant C\big(\e^{\frac{1}{2}} +\kappa(\e)\big) \|f\|_{L_2(\Omega)}\|v_\e\|_{W_2^1(\Omega_\e)}.
\end{equation*}
Из последнего неравенства и (\ref{5.2}), (\ref{5.3}) следует
\begin{equation*}
\|v_\e\|_{W_2^1(\Om^\e)}\leqslant C\big(\e^{\frac{1}{2}} +\kappa(\e)\big)\|f\|_{L_2(\Om)}.
\end{equation*}

Оценим теперь норму $u_\e-u_0$. Используя последнее неравенство и лемму \ref{lm3.7}, выводим оценку:
\begin{equation*}
\|u_\e-u_0\|_{W_2^1(\Om^\e)}\leqslant\|v_\e\|_{W_2^1(\Om^\e)}+\|\tilde{u}_0-u_0\|_{W_2^1(\Om^\e)}
\leqslant C\big(\e^{\frac{1}{2}}+\kappa(\e)\big)\|f\|_{L_2(\Om)}.
\end{equation*}
Теорема \ref{th2} доказана.

\section{Сходимость в $L_2$-норме}

Настоящий параграф посвящён доказательству теорем~\ref{th3},~\ref{th4}. В доказательстве мы воспользуемся подходом, который применялся в работах \cite{Sen1}, \cite{Sen2}, \cite{Pas1}, \cite{Pas2} для вывода аналогичных утверждений.  А именно, ключевым является следующий факт, справедливый для произвольного рефлексивого банахового пространства: если для некоторого элемента $v$ этого пространства и любого линейного функционала $\cB$ на нем выполнена оценка $
|\cB v|\leqslant  C\|\cB\|$
с константой $C$, не зависящей от $\cB$, то верно $\|v\|\leqslant C$. В нашем случае таким пространством является $L_2(\Om^\e)$, а в качестве функции $v$ берётся функция $u_\e-u_0$, где $u_0$ -- решение соответствующей из усреднённых задач. Мы будем доказывать оценку
\begin{equation}\label{7.2}
\big|(u_\e-u_0,h)_{L_2(\Om^\e)}\big|\leqslant C\big(\vk_1(\e)\|f\|_{L_2(\Om^\e)}+ \vk_2(\e)\|f\|_{L_2(\tht^\e)}\big)\|h\|_{L_2(\Om^\e)}
\end{equation}
для произвольной функции $h\in L_2(\Om)$ и некоторыми функциями $\vk_1(\e)$, $\vk_2(\e)$,  стремящимися к нулю при $\e\to+0$;
здесь и всюду далее через  $C$ обозначаем несущественные константы, не зависящие от $\e$, $f$, $h$, пространственных переменных и функции $V_0$, которая будет введена ниже.
 Отсюда  будет следовать  неравенство для $v_\e$:
\begin{equation}\label{7.3}
\|v_\e\|_{L_2(\Om^\e)}\leqslant C\big(\vk_1(\e)\|f\|_{L_2(\Om^\e)}+ \vk_2(\e)\|f\|_{L_2(\tht^\e)}\big) \|f\|_{L_2(\Om^\e)},
\end{equation}
из которого уже будут вытекать утверждения теорем~\ref{th3},~\ref{th4}.

Пусть $h$ -- произвольная функция из $L_2(\Om^\e)$. Продолжим её нулём внутрь полостей $\tht^\e$ и рассмотрим краевую задачу
\begin{equation}\label{7.5}
\begin{gathered}
\bigg(-\sum\limits_{i,j=1}^n\frac{\p}{\p x_i}A_{ij}\frac{\p}{\p x_j}-\sum\limits_{j=1}^n
\frac{\p\ }{\p x_j}\overline{A_j}
+\overline{A_0}- \overline{\lambda}\bigg) V_0=h\quad\text{в}\quad\Om,
\\
V_0=0\quad\text{на}\quad\p\Om.
\end{gathered}
\end{equation}
Так как $A_j\in W_\infty^1(\Om)$, то согласно лемме~\ref{lm3.2} такая задача однозначно разрешима в $W_2^2(\Om)$. Кроме того, верна оценка
\begin{equation}\label{7.10}
\|V_0\|_{W_2^2(\Om)}\leqslant C\|h\|_{L_2(\Om^\e)}.
\end{equation}

\subsection{Доказательство теоремы~\ref{th3}}
Функция   $v_\e$, являющаяся решением задачи (\ref{7.7}), очевидно принадлежат пространству $W_2^2(\Om^\e)$. С учётом этого факта умножим уравнение в задаче (\ref{7.5}) на $v_\e$ скалярно в $L_2(\Om^\e)$ и дважды проинтегрируем по частям, учитывая краевую задачу (\ref{7.7}). Тогда получим следующее равенство:
\begin{equation}\label{7.9}
\begin{aligned}
(v_\e,h)_{L_2(\Om^\e)}=&
-\bigg(v_\e,\bigg(\frac{\p\ }{\p n}+\sum\limits_{j=1}^{n}\overline{A_j} \nu_j\bigg)V_0\bigg)_{L_2(\p\tht^\e)} + \left(\frac{\p v_\e}{\p n},V_0\right)_{L_2(\p\tht^\e)}
\\
=&-\bigg(v_\e,\bigg(\frac{\p\ }{\p n}+\sum\limits_{j=1}^{n}\overline{A_j} \nu_j\bigg)V_0\bigg)_{L_2(\p\tht^\e)}
-\left(\frac{\p u_0}{\p n},V_0\right)_{L_2(\p\tht^\e)}
\\
&-\left(a(\,\cdot\,,u_\e),V_0\right)_{L_2(\p\tht^\e)}
\\
=&-\bigg(v_\e,\bigg(\frac{\p\ }{\p n}+\sum\limits_{j=1}^{n}\overline{A_j} \nu_j\bigg)V_0\bigg)_{L_2(\p\tht^\e)}
-\left(\frac{\p u_0}{\p n},V_0\right)_{L_2(\p\tht^\e)}
\\
&-\left(a(\,\cdot\,,u_\e)-a(\,\cdot\,,u_0),V_0\right)_{L_2(\p\tht^\e)}
-\left(a(\,\cdot\,,u_0),V_0\right)_{L_2(\p\tht^\e)}.
\end{aligned}
\end{equation}
Оценим правую часть этого равенства.

Проинтегрируем по частям аналогично  (\ref{4.0}):
 \begin{align*}
 \int\limits_{B_1^k\setminus \om_k^\e} v_\e \overline{h}\,dx=&
\int\limits_{B_1^k\setminus \om_k^\e}v_\e \bigg(-\sum\limits_{i,j=1}^n\frac{\p}{\p x_i}A_{ij}\frac{\p}{\p
x_j}-\sum\limits_{j=1}^n
\frac{\p}{\p x_j}A_j
+A_0-\l\bigg)\overline{V_0} \,dx
\\
=&\int\limits_{\p\om_k^\e}v_\e \overline{\bigg(\frac{\p\ }{\p \mathrm{n}}
+\sum\limits_{j=1}^{n} A_j \nu_j
\bigg)V_0}\,ds-\int\limits_{\p B_1^k}v_\e \overline{\bigg(\frac{\p\ }{\p \mathrm{n}}
+\sum\limits_{j=1}^{n} A_j \nu_j
\bigg)V_0}  \,ds
\\
&+\sum\limits_{i,j=1}^n\int\limits_{B_1^k\setminus \om_k^\e} A_{ij}\frac{\p \overline{V_0}}{\p
x_j}\frac{\p v_\e}{\p x_i}\,dx
+\sum\limits_{j=1}^n\int\limits_{B_1^k\setminus \om_k^\e} A_j\frac{\p
v_\e}{\p x_j}\overline{V}_0 \,dx
\\
&
+\int\limits_{B_1^k\setminus \om_k^\e} (A_0-\l) v_\e\overline{V}_0\,dx.
\end{align*}
Далее проинтегрируем по частям с учётом краевых условий в (\ref{4.3}):
\begin{align*}
0=&\sum\limits_{i=1}^n\int\limits_{B_{b_{*}}^k\setminus B_1^k} v_\e \overline{\sum\limits_{j=1}^{n}\bigg(A_{ij}\frac{\p V_0}{\p x_j}+A_i V_0\bigg)} \Delta W_{k,i}^\e \,dx
\\
=& -\int\limits_{\p B_1^k}v_\e \overline{\bigg(\frac{\p\ }{\p \mathrm{n}}
+\sum\limits_{j=1}^{n} A_j \nu_j
\bigg)V_0}  \,ds
\\
&- \sum\limits_{i=1}^n\int\limits_{B_{b_{*}}^k\setminus B_1^k} \nabla W_{k,i}^\e \nabla v_\e \overline{\sum\limits_{j=1}^{n}\bigg(A_{ij}\frac{\p V_0}{\p x_j}+A_i V_0\bigg)} \,dx.
\end{align*}
Полученные соотношения позволяют выразить первое слагаемое в правой части (\ref{7.9}) следующим образом:
\begin{align*}
-&\bigg(v_\e,\bigg(\frac{\p\ }{\p n}+\sum\limits_{j=1}^{n}\overline{A_j} \nu_j\bigg)V_0\bigg)_{L_2(\p\tht^\e)}= - \sum\limits_{k\in\mathds{M}^\e} \int\limits_{B_1^k\setminus \om_k^\e} v_\e \overline{h}\,dx
\\
&
+ \sum\limits_{k\in\mathds{M}^\e} \left(
\sum\limits_{i,j=1}^n\int\limits_{B_1^k\setminus \om_k^\e} A_{ij}\frac{\p \overline{V_0}}{\p
x_j}\frac{\p v_\e}{\p x_i}\,dx
+\sum\limits_{k\in\mathds{M}^\e} \sum\limits_{j=1}^n\int\limits_{B_1^k\setminus \om_k^\e} A_j\frac{\p
v_\e}{\p x_j}\overline{V}_0 \,dx \right)
\\
&
+\sum\limits_{k\in\mathds{M}^\e}  \int\limits_{B_1^k\setminus \om_k^\e} (A_0-\l) v_\e\overline{V}_0\,dx
\\
& +\sum\limits_{k\in\mathds{M}^\e}\sum\limits_{i=1}^n\int\limits_{B_{b_{*}}^k\setminus B_1^k} \nabla W_{k,i}^\e \nabla v_\e \overline{\sum\limits_{j=1}^{n}\bigg(A_{ij}\frac{\p V_0}{\p x_j}+A_i V_0\bigg)} \,dx.
\end{align*}
Эта формула, оценка (\ref{7.10}) и лемма~\ref{lm3.3} приводят к неравенству:
\begin{equation}\label{7.11}
\left|\bigg(v_\e,\bigg(\frac{\p\ }{\p n}+\sum\limits_{j=1}^{n}\overline{A_j} \nu_j\bigg)V_0\bigg)_{L_2(\p\tht^\e)}\right|\leqslant C \big(\e\eta +\e^\frac{1}{2}\eta^{\frac{n}{2}} \big) \|v_\e\|_{W_2^1(\Om^\e)} \|h\|_{L_2(\Om^\e)}.
\end{equation}

Умножим уравнение в (\ref{2.12}) на $V_0$ скалярно в $L_2(\tht^\e)$ и однократно проинтегрируем по частям:
\begin{align*}
\left(\frac{\p u_0}{\p n},V_0\right)_{L_2(\p\tht^\e)} =&
\sum\limits_{i,j=1}^n\int\limits_{\tht^\e} A_{ij}\frac{\p u_0}{\p
x_j}\frac{\p\overline{V}_0}{\p x_i}\,dx
+\sum\limits_{j=1}^n\int\limits_{\tht^\e} A_j\frac{\p
u_0}{\p x_j}\overline{V}_0 \,dx
\\
&
+\int\limits_{\tht^\e} (A_0-\l) u_0\overline{V}_0\,dx - (f,V_0)_{L_2(\tht^\e)}.
\end{align*}
Применение теперь леммы~\ref{lm3.3} и оценок~(\ref{7.10}),~(\ref{4.7})
даёт следующую оценку:
\begin{equation}\label{7.12}
\begin{aligned}
\left| \left(\frac{\p u_0}{\p n},V_0\right)_{L_2(\p\tht^\e)} \right| \leqslant &C  \big(\e^2\eta^2 +\e \eta^n \big) \|f\|_{L_2(\Om^\e)} \|h\|_{L_2(\Om^\e)}
\\
&+C\big(\e\eta +\e^\frac{1}{2}\eta^{\frac{n}{2}} \big)  \|f\|_{L_2(\tht^\e)} \|h\|_{L_2(\Om^\e)}.
\end{aligned}
\end{equation}
В случае $a\equiv 0$ этой оценки, (\ref{7.11}) и (\ref{2.17}) достаточно, чтобы оценить правую часть (\ref{7.9}) и получить неравенство (\ref{7.3}) с
\begin{equation*}
\vk_1(\e)=\e^2\eta^2(\e)+\e \eta^n(\e),\qquad  \vk_2(\e)=\e\eta(\e)+\e^\frac{1}{2}\eta^{\frac{n}{2}}(\e),
\end{equation*}
что уже приводит к (\ref{2.20}).

Пусть  $a\not\equiv 0$ и $\eta(\e)\to+0$ при $\e\to+0$. Из второго условия в (\ref{2.2}) выводим:
\begin{equation*}
\big|a(x,u_\e)-a(x,u_0)\big|\leqslant C|v_\e|.
\end{equation*}
Теперь третье слагаемое в правой части (\ref{7.9}) легко оценивается с помощью леммы~\ref{lm3.3} и (\ref{3.23}), (\ref{12.21}), (\ref{7.10}):
\begin{equation}\label{7.13}
\begin{aligned}
\big|\left(a(\,\cdot\,,u_\e)-a(\,\cdot\,,u_0),V_0\right)_{L_2(\p\tht^\e)} &\big|
\leqslant C\|v_\e\|_{L_2(\p\tht^\e)} \|V_0\|_{L_2(\p\tht^\e)}
\\
\leqslant & C \big(\e\eta+ \eta^{n-1}\big)\|v_\e\|_{W_2^1(\Om^\e)}\|V_0\|_{W_2^1(\Om)}
\\
\leqslant & C\big(\e^2\eta^2
+ \eta^{2(n-1)}\big)\|f\|_{L_2(\Om)} \|h\|_{L_2(\Om^\e)}.
\end{aligned}
\end{equation}

В \cite[Лем. 3.3]{25} была доказана оценка, из которой для произвольной $u\in W_2^2(\Om)$ следует, что
\begin{align*}
\|u\|_{L_2(\tht^\e)}^2 \leqslant C \bigg(&\e\eta  \sum\limits_{k\in\mathds{M}^\e}  \|\nabla u\|_{L_2(B_{b R_2 \e}(M_k^\e)\setminus \om_k^\e)}^2
\\
&+ \e^{-1}\eta^{n-1} \sum\limits_{k\in\mathds{M}^\e}\|u\|_{L_2(L_2(B_{b R_2 \e}(M_k^\e)\setminus \om_k^\e)}^2\bigg).
\end{align*}
Аналогично выводу (\ref{3.7}) из   (\ref{7.18})  получаем:
\begin{equation*}
\|u\|_{L_2(\tht^\e)}^2 \leqslant C \big(\e^2\eta  +\eta^{n-1}\big)
\|u\|_{W_2^2(\Om)}^2.
\end{equation*}
Эта оценка и (\ref{12.21}), (\ref{7.10}), (\ref{4.7}) позволяют теперь оценить последнее слагаемое в правой части (\ref{7.9}):
\begin{equation*}
\big|\left(a(\,\cdot\,,u_0),V_0\right)_{L_2(\p\tht^\e)}\big| \leqslant
C\big(\e^2\eta  +\eta^{n-1}\big) \|f\|_{L_2(\Om)} \|h\|_{L_2(\Om^\e)}.
\end{equation*}
Из последней оценки, (\ref{7.13}), (\ref{7.12}), (\ref{7.11}), (\ref{7.12}) вытекает неравенство (\ref{7.3}) с
\begin{equation*}
\vk_1(\e)=\e^2\eta(\e)+ \eta^{n-1}(\e),\qquad  \vk_2(\e)=\e\eta(\e)+\e^\frac{1}{2}\eta^{\frac{n}{2}}(\e),
\end{equation*}
что означает справедливость (\ref{2.4}). Теорема доказана.

\subsection{Доказательство теоремы~\ref{th4}}

Умножим  уравнение в задаче (\ref{7.5}) на функцию $u_\e$ скалярно в $L_2(\Om^\e)$ и дважды проинтегрируем по частям, учитывая уравнение в (\ref{2.5}). Тогда аналогично
 (\ref{7.9}) получаем:
\begin{equation}\label{7.26}
\begin{aligned}
(u_\e,h)_{L_2(\Om^\e)}=&-\bigg(u_\e,\bigg(\frac{\p\ }{\p n}+\sum\limits_{j=1}^{n}\overline{A_j} \nu_j\bigg)V_0\bigg)_{L_2(\p\tht^\e)}
\\
&-\left(a(\,\cdot\,,u_\e) -a(\,\cdot\,,u_0),V_0\right)_{L_2(\p\tht^\e)}
\\
&
-\left(a(\,\cdot\,,u_0),V_0\right)_{L_2(\p\tht^\e)} +(f,V_0)_{L_2(\Om^\e)}.
\end{aligned}
\end{equation}
Далее умножим уравнение в задаче (\ref{7.5}) на $u_0$ скалярно в $L_2(\Om)$ и вновь дважды проинтегрируем по частям, учитывая краевую задачу (\ref{2.13}), (\ref{2.15}):
\begin{equation*}
(u_0,h)_{L_2(\Om^\e)}=(u_0,h)_{L_2(\Om)}=
 (\a^0 a(\,\cdot\,, u_0),V_0)_{L_2(S)} +(f,V_0)_{L_2(\Om)}.
\end{equation*}
Вычтем это равенство из (\ref{7.26}) и после элементарных преобразований получаем:
\begin{equation}\label{7.19}
\begin{aligned}
(v_\e,h)_{L_2(\Om^\e)}=&-\bigg(u_\e,\bigg(\frac{\p\ }{\p n}+\sum\limits_{j=1}^{n}\overline{A_j} \nu_j\bigg)V_0\bigg)_{L_2(\p\tht^\e)}
-\left(\frac{\p u_0}{\p n},V_0\right)_{L_2(\p\tht^\e)}
\\
&-\left(a(\,\cdot\,,u_\e) -a(\,\cdot\,,u_0),V_0\right)_{L_2(\p\tht^\e)}
-(f,V_0)_{L_2(\tht^\e)}
\\
&-(\a^0 a(\,\cdot\,,u_0),V_0)_{L_2(\tilde{S})}
-\left(a(\,\cdot\,,u_0),V_0\right)_{L_2(\p\tht^\e)}
\\
&-(\a^0 a(\,\cdot\,,u_0),V_0)_{L_2(S)} + (\a^0 a(\,\cdot\,,u_0),V_0)_{L_2(\tilde{S})}.
\end{aligned}
\end{equation}
Как в доказательстве теоремы~\ref{th3}, оценим правую часть этого равенства. Для первых трёх слагаемых в правой части верны неравенства (\ref{7.11}), (\ref{7.12}), (\ref{7.13}). Поэтому оценки требует только оставшиеся пять слагаемых, что и будем нашей основной целью в дальнейших вычислениях.

Из леммы~\ref{lm3.3} и (\ref{7.10}) сразу выводим:
\begin{equation}\label{7.27}
\big|(f,V_0)_{L_2(\tht^\e)}\big| \leqslant C\big(\e\eta +\e^\frac{1}{2}\eta^{\frac{n}{2}} \big)  \|f\|_{L_2(\tht^\e)} \|h\|_{L_2(\Om^\e)}.
\end{equation}
Сумму
\begin{equation*}
-(\a^0 a(\,\cdot\,,u_0),V_0)_{L_2(\tilde{S})}
-\left(a(\,\cdot\,,u_0),V_0\right)_{L_2(\p\tht^\e)}
\end{equation*}
 в правой части (\ref{7.19}) будем оценивать также, как это было сделано для аналогичного выражения в доказательстве теоремы~\ref{th2}: необходимо лишь заменить $v_\e$ на $V_0$, а $\tilde{u}_0$  на $u_0$.  При этом следует дополнительно использовать оценки (\ref{7.23}) и (\ref{7.21}). В результате в правых частях оценок, аналогичных (\ref{5.4}), (\ref{5.5}), (\ref{6.7a}), возникают выражения $C\e \|f\|_{L_2(\Om)}\|h\|_{L_2(\Om^\e)}$. Оценка (\ref{6.7c}) остаётся без изменений. В итоге приходим к следующему аналогу оценки (\ref{6.7b}):
\begin{equation*}
\Big|(a(\,\cdot\,,\tilde{u}_0),V_0)_{L_2(\p \theta^\e)}
+(\a^\e a(\,\cdot\,,\tilde{u}_0),V_0)_{L_2(\tilde{S})}\Big|
\leqslant C\e \|f\|_{L_2(\Om)}\|h\|_{L_2(\Om^\e)}.
\end{equation*}
Пользуясь теперь леммой~\ref{lm3.6}, получаем неравенство:
\begin{equation}\label{7.28}
\Big|\left(a(\,\cdot\,,\tilde{u}_0),V_0\right)_{L_2(\p\tht^\e)} + (\a^0 a(\,\cdot\,,\tilde{u}_0),V_0)_{L_2(\tilde{S})}\Big| \leqslant C(\e+\kappa(\e)) \|f\|_{L_2(\Om)}\|h\|_{L_2(\Om^\e)}.
\end{equation}

Разность последних двух слагаемых в правой части (\ref{7.19}) представим в виде следующего интеграла по аналогии с (\ref{12.18}):
\begin{equation}\label{7.29}
\begin{aligned}
(\a^0a(\,\cdot\,,u_0),V_0)_{L_2(\tilde{S})}-&(\a^0 a(\,\cdot\,,u_0),V_0)_{L_2(S)}
\\
=& \int\limits_{S} a^0(x) a(x,u_0(x))\overline{V_0(x)}\bigg|_{\tau=\e(2bR_2+R_0)} \cos(\nu,\tilde{\nu})\,ds
\\
&- \int\limits_{S} a^0(x) a(x,u_0(x))\overline{V_0(x)} \,ds
\\
=&\int\limits_{S} a^0(x) a(x,u_0(x))\overline{V_0(x)}\bigg|_{\tau=\e(2bR_2+R_0)} \big(\cos(\nu,\tilde{\nu})-1\big)\,ds
\\
&+  \int\limits_{S} ds\int\limits_{0}^{\e(2bR_2+R_0)}a^0(x) \frac{\p\ }{\p\tau} a(x,u_0(x))\overline{V_0(x)} \,ds.
 \end{aligned}
\end{equation}
Верна оценка, аналогичная (\ref{12.17}):
\begin{equation*}
|\cos(\nu,\tilde{\nu})-1|\leqslant C\e.
\end{equation*}
Используя эту оценку, (\ref{4.7a}), (\ref{7.10}), (\ref{12.21}), (\ref{3.7}), из (\ref{7.29}) выводим:
\begin{equation*}
\big| (\a^0a(\,\cdot\,,u_0),V_0)_{L_2(\tilde{S})}-(\a^0 a(\,\cdot\,,u_0),V_0)_{L_2(S)}
\big| \leqslant C\e \|f\|_{L_2(\Om)}\|h\|_{L_2(\Om^\e)}.
\end{equation*}
Из этого неравенства, (\ref{7.28}), (\ref{7.27}) и упомянутых выше улучшенных аналогов  (\ref{5.4}), (\ref{5.5}), (\ref{6.7a}) уже следует оценка
 (\ref{7.2}) с
\begin{equation*}
\vk_1(\e)=\e+ \kappa(\e),\qquad  \vk_2(\e)=\e\eta(\e)+\e^\frac{1}{2}\eta^{\frac{n}{2}}(\e),
\end{equation*}
из которой вытекает неравенство (\ref{7.3}).  Теорема доказана.

\section*{Благодарности}

Исследование  выполнено за счет гранта Российского научного фонда
(проект № 20-11-19995).

\end{document}